\def\makeautorefname#1#2{\expandafter\def\csname#1autorefname\endcsname{#2}}
\def\equationautorefname~#1\null{(#1)\null}
\newtheorem{thm}{Theorem}[subsection]
\newtheorem{cor}{Corollary}[subsection]
\newtheorem{prop}{Proposition}[subsection]
\newtheorem{lem}{Lemma}[subsection]
\theoremstyle{definition}
\newtheorem{defn}{Definition}[subsection]
\newtheorem{exmp}{Example}[subsection]
\newtheorem{exmps}{Examples}[subsection]
\newtheorem{notn}{Notation}[subsection]
\newtheorem{notns}{Notations}[subsection]
\newtheorem{rem}{Remark}[subsection]
\newtheorem{warn}{Warning}[subsection]
\let\c@obs=\c@thm
\let\c@cor=\c@thm
\let\c@prop=\c@thm
\let\c@lem=\c@thm
\let\c@prob=\c@thm
\let\c@con=\c@thm
\let\c@conj=\c@thm
\let\c@defn=\c@thm
\let\c@notn=\c@thm
\let\c@notns=\c@thm
\let\c@exmp=\c@thm
\let\c@ax=\c@thm
\let\c@pro=\c@thm
\let\c@ass=\c@thm
\let\c@warn=\c@thm
\let\c@rem=\c@thm
\let\c@sch=\c@thm
\let\c@equation\c@thm
\numberwithin{equation}{section}
\title{Ring operads and symmetric bimonoidal categories}
\author{Kailin Pan}
\date{}
\begin{document}

    \begin{abstract}
        We generalize the classical operad pair theory 
        to a new model for $E_\infty$ ring spaces, 
        which we call ring operad theory, and establish a connection
        with the classical operad pair theory, allowing
        the classical multiplicative infinite 
        loop machine to be applied to algebras 
        over any $E_\infty$ ring operad.
        As an application, we show that 
        classifying 
        spaces of symmetric bimonoidal categories are directly homeomorphic
        to certain $E_\infty$ ring spaces in the ring operad sense.
        Consequently, this provides an alternative construction
        from symmetric bimonoidal categories to classical $E_\infty$
        ring spaces.
        We also present a comparison between this construction and 
        the classical approach.
    \end{abstract}

    \maketitle

    \tableofcontents

    \section{Introduction}

    In \cite{may1972geometry}, Peter May established the operad 
    theory to detect zero spaces of spectra. 
    Roughly speaking, any $E_\infty$ space 
    (algebra over some $E_\infty$ operad) admits a group completion
    that has an $n$-fold delooping for any natural number $n$,
    and thus homotopy equivalent to the zero space of some 
    connective spectrum.
    Therefore, any $E_\infty$ space determines 
    a connected spectrum and thus a generalized cohomology
    theory. As an example, the classifying
    space of some symmetric monoidal category admits an
    $E_\infty$ space structure as shown in \cite{OperadsforSymmetricMonoidalCategories}.
    In particular, beginning with the category of 
    projective modules over some commutative ring $R$ with direct sum,
    we get the additive algebraic $K$-theory spectrum $K(R)$.

    In practice, instead of additive generalized cohomology theory,
    we are more interested in generalized cohomology theory 
    with a good cup-product structure, which is represented by an 
    $E_\infty $ ring spectrum. This motivates a 
    multiplicative infinite loop space theory to detect 
    zero spaces of $E_\infty $ ring spectra. 
    Several different models for 
    multiplicative infinite loop space theory have been established.
    We list some of them.
    \begin{itemize}
        \item[(1)] The operad pair theory \cite{may1977ring,may1982multiplicative}.
        \item[(2)] The infinite categorical multiplicative infinite loop space machine \cite{Gepner_2015}.
        \item[(3)] The moperad and bioperad theory \cite{may_moperad}.
    \end{itemize}
    Each of these three theories defines a notion $E_\infty$ ring space
    and constructs for any $E_\infty$ ring space a ring completion
    that is homotopic to the zero space of some $E_\infty$ ring spectrum.
    However, the comparison between these three definitions of 
    $E_\infty$ ring space is still unknown.

    As mentioned above, the additive algebraic $K$-theory spectrum $K(R)$
    comes from the symmetric monoidal category of 
    projective modules over some commutative ring $R$ with direct sum.
    It turns out that $K(R)$ admits an $E_\infty$ ring structure 
    that is closely related to the symmetric bimonoidal structure 
    of the category of projective $R$-modules with direct sum and tensor 
    product. Hence, we expect the classifying space of any 
    symmetric bimonoidal category admits an $E_\infty$ ring space
    structure.

    This holds for the infinite categorical machine. In the 
    bioperad theory, there exists a bioperad $\mathscr{P}^{bi}$
    in the category of small categories whose algebras are precisely
    bipermutative categories, so classifying spaces of 
    bipermutative categories are $E_\infty$ ring spaces in the 
    bioperad sense. As shown in \cite[Section VI.3]{may1977ring},
    any symmetric bimonoidal category is functorially equivalent
    to some bipermutative category, so the bioperad theory also 
    provides a construction of algebraic $K$-theory ring spectra
    from symmetric bimonoidal categories.

    However, a similar construction does not work directly in the classical
    operad pair theory. There is no operad pair in the category of small categories
    whose algebra category is either symmetric bimonoidal categories
    or bipermutative categories. A construction
    from bipermutative categories to $E_\infty$ ring spaces in the 
    operad pair sense is shown in 
    \cite{may1982multiplicative,TheconstructionofE_infringspacesfrombipermutativecategories},
    in which Peter May proves that the classifying of a bipermutative
    category is homotopy equivalent to some $E_\infty$ ring spaces in the 
    operad pair sense.

    In this paper, we generalize the classical operad pair theory 
    to a new model for $E_\infty$ ring spaces, which we call ring operads.
    This new theory provides an alternative construction
    from bipermutative
    categories to classical $E_\infty$ ring spaces.
    
    Informally, we develop a new definition of 
    ``$E_\infty$ ring spaces" using this new ring operad theory.
    It turns out that this new notion of ``$E_\infty$ ring spaces"
    coincides with the classical one in the operad pair sense
    up to homotopy, so we can apply the classical
    delooping machine to $E_\infty$ ring spaces in the new sense.
    As an application, we show that 
    classifying 
    spaces of all tight symmetric bimonoidal categories with strict 
    zero and unit elements are directly homeomorphic
    to some $E_\infty$ ring space in the new sense.
    In particular, we get an alternative construction
    from bipermutative
    categories to classical $E_\infty$ ring spaces. 

    To compare this new construction with the classical one, 
    recall that in \cite{may1982multiplicative}
    the classical construction is based on a passage from 
    bipermutative categories to $\mathscr{F}\wr\mathscr{F}$-categories.
    Here $\mathscr{F}\wr\mathscr{F}$ is a specific small category
    defined in Definition \ref{defnoffwrf}.
    This passage is done by first construct a 
    lax functor from $\mathscr{F}\wr\mathscr{F}$ to the category
    of small categories and then strictify it to get a 
    genuine functor, and we will show that in this new 
    construction, the ring operad theory gives a new 
    interpretation of this lax functor. See Remark \ref{comptwoconst}
    for detail.

    \subsection{Statement of results}

    In analogy with classical operad theory, 
    starting with any symmetric monoidal category, 
    we define a ring operad $\mathscr{C}$ to be a collection of objects 
    $\{\mathscr{C}(f)\}$ indexed not by natural numbers 
    but by some collection of polynomials, together with 
    structure maps satisfying several commutative diagrams. See
    Definition \ref{defoftingoperad}. We also define a notion 
    of algebras over ring operads in Definition \ref{Calgebra}
    compared with algebras over classical operads.

    Similarly, in the category of spaces, we call a 
    ring operad to be $E_\infty$ if its all components are 
    contractible and the structure maps satisfy several freeness and 
    cofibration conditions. Less formally, an algebra 
    over such an $E_\infty$ ring operad is precisely a topological
    space with two binary operations $+,\times$ such that 
    all unit, associativity, commutativity, and distributivity laws 
    hold up to all higher homotopies.

    The following comparison theorem plays an essential role in 
    the ring operad theory.
    \begin{thm}[\ref{thmcompare1}]\label{thmcomp1.1.1}
        The categories of algebras over any two $E_\infty$ ring operads $\mathscr{C},\mathscr{C}'$
        have equivalent homotopy categories. Moreover, any $\mathscr{C}$ algebra
        is homotopy equivalent to some $\mathscr{C}'$ algebra.
    \end{thm}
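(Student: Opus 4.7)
The plan is to mimic the classical zig-zag argument for $E_\infty$ operads through the product operad, adapted to the polynomial-indexed ring operad setting. First I would introduce the componentwise product of ring operads: given $\mathscr{C}$ and $\mathscr{C}'$, form $\mathscr{C}\times\mathscr{C}'$ with $(\mathscr{C}\times\mathscr{C}')(f)=\mathscr{C}(f)\times\mathscr{C}'(f)$, where the structure maps are the obvious diagonal ones on each polynomial index $f$. I would then verify from the axioms of Definition~\ref{defoftingoperad} that this really does inherit a ring operad structure, and that the two projections
\[
\mathscr{C}\xleftarrow{\pi_1}\mathscr{C}\times\mathscr{C}'\xrightarrow{\pi_2}\mathscr{C}'
\]
are morphisms of ring operads which are levelwise weak equivalences, since both $\mathscr{C}(f)$ and $\mathscr{C}'(f)$ are contractible. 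The required freeness and cofibration conditions transfer to the product because they are stable under products of contractible cofibrant pieces.

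Next I would transport algebras along these projections. Any $\mathscr{C}$-algebra $X$ pulls back along $\pi_1$ to a $(\mathscr{C}\times\mathscr{C}')$-algebra $\pi_1^* X$ with the same underlying space, and similarly for $\mathscr{C}'$. It suffices, therefore, to prove that whenever $\varphi\colon\mathscr{D}\to\mathscr{E}$ is a levelwise weak equivalence between $E_\infty$ ring operads, the pullback functor $\varphi^*\colon\mathbf{Alg}_\mathscr{E}\to\mathbf{Alg}_\mathscr{D}$ induces an equivalence of homotopy categories; then applying this to both $\pi_1$ and $\pi_2$ concludes the proof, with the explicit chain
\[
X \;\longleftarrow\; \pi_1^* X \;\longmapsto\; (\pi_2)_!\,\pi_1^* X
\]
exhibiting a $\mathscr{C}'$-algebra homotopy equivalent to $X$.

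To realize $(\pi_2)_!$ (and, more generally, the homotopy inverse of $\varphi^*$) I would adapt May's two-sided monadic bar construction to ring operads. Writing $C$ and $C'$ for the monads on spaces associated with $\mathscr{C}$ and $\mathscr{C}'$ (whose existence uses the polynomial composition structure exactly as in the classical case), the required functor is
\[
X\longmapsto B(C',\,C\times C',\,\pi_1^* X),
\]
together with the augmentation $B(C,C\times C',\pi_1^* X)\to X$ and the comparison $B(C',C\times C',\pi_1^* X)$. The freeness and cofibration axioms built into the definition of an $E_\infty$ ring operad are precisely what one needs to guarantee that the simplicial bar constructions are proper in each degree, so that levelwise weak equivalences of monads induce weak equivalences on geometric realizations. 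Contractibility then forces both augmentations to be weak equivalences, which yields the claimed equivalence of homotopy categories.

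The main obstacle will be the combinatorial bookkeeping required to define the bar construction and verify its formal properties in the polynomial-indexed setting: unlike classical operads, composition in a ring operad is organized by composition of polynomials rather than by plugging trees into a single arity, so proving that $C$ and $C\times C'$ are really monads on spaces (and that algebras over them coincide with algebras over the corresponding ring operads) takes genuine work. Once this monadic formalism is in place, the rest of the argument follows the classical template of \cite{may1972geometry} almost verbatim, with the cofibration conditions from the $E_\infty$ definition playing the role of May's $\Sigma$-freeness hypotheses.
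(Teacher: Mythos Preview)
Your overall architecture is exactly what the paper does: form the product ring operad $\mathscr{C}\times\mathscr{C}'$, note that both projections are morphisms of $E_\infty$ ring operads, reduce to showing that any such morphism $\nu\colon\mathscr{D}\to\mathscr{E}$ induces an equivalence of homotopy categories of algebras, and realize the inverse of $\nu^*$ via the two-sided bar construction $X\mapsto B(\mathbb{E},\mathbb{D},X)$. So the strategy is correct.

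Where you go wrong is in locating the difficulty. You say the main obstacle is ``proving that $C$ and $C\times C'$ are really monads on spaces (and that algebras over them coincide with algebras over the corresponding ring operads),'' and that once this is done ``the rest of the argument follows the classical template \ldots\ almost verbatim.'' In fact the monad construction is formal and already done in Section~\ref{Sectiondefn} (Definition~\ref{defnofmonad}, Proposition~\ref{propofmonadandoperad}); it needs no new ideas. The genuine obstacle is the step you pass over: showing that $\nu\colon\mathbb{D}X\to\mathbb{E}X$ is a weak equivalence for every $X$. In the classical case this is immediate from $\Sigma$-freeness because $CX=\coprod_n\mathscr{C}(n)\times_{\Sigma_n}X^n$ is a disjoint union of Borel quotients. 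Here $\mathbb{C}X$ is a coend over the category $\widehat{\mathcal{R}}$, whose morphisms include not only permutations but non-invertible surjections between non-degenerate polynomials of different sizes and singular maps evaluating variables to $0$ or $1$. A levelwise equivalence of diagrams over such a category does \emph{not} automatically induce an equivalence on coends; the axioms (2)--(5) of Definition~\ref{defnEinfty} are tailored precisely to make this work, and extracting the conclusion from them is the bulk of the paper's appendix.

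Concretely, the paper first reduces the coend to one over the subcategory $\widehat{\mathcal{R}}_{n.d.}$ of non-degenerate polynomials (Lemma~\ref{lemofndhomeo}), then decomposes $\widehat{\mathcal{R}}_{n.d.}$ into connected components indexed by ``special'' polynomials (Proposition~\ref{propofnondegen}), and finally builds $\mathbb{C}X$ component by component via an explicit filtration by $|g|$, exhibiting each stage as a pushout along a cofibration with contractible source (Lemma~\ref{lemonCX}, Proposition~\ref{propofCX}). Conditions (3) and (4) are used to show that certain intersections of images $\phi_{i*}\mathscr{C}(h_i)\subset\mathscr{C}(g)$ are themselves images from strictly larger polynomials, which is what makes the inductive gluing argument go through. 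None of this is visible from the classical $\Sigma$-free picture, and your proposal gives no indication of how it would be carried out.
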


    To compare the ring operad theory with classical operad pair theory,
    we prove the following theorem.
    \begin{thm}[\ref{propringoperadandoperadpairsamealg}, \ref{propoperadpairEinftyimpliesringoperadEinfty}]
        For any operad pair $(\mathscr{C},\mathscr{G})$,
        there is a ring operad $\mathscr{R}_{\mathscr{C},\mathscr{G}}$
        which is $E_\infty$ when $(\mathscr{C},\mathscr{G})$
        is $E_\infty$ such that their categories of algebras are isomorphic.
    \end{thm}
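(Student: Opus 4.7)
The plan is to let each polynomial dictate the shape of the corresponding ring operad component. Writing an abstract polynomial $f$ in $n$ variables as a sum of $k$ monomials whose $i$-th monomial has length $n_i$, together with an indexing function recording which variable appears in which factor, I would define
$$\mathscr{R}_{\mathscr{C},\mathscr{G}}(f):=\mathscr{C}(k)\times\prod_{i=1}^k\mathscr{G}(n_i),$$
with the equivariance induced by the obvious actions on the collection of monomials and on the factors within each monomial. The heuristic is that an element of $\mathscr{R}_{\mathscr{C},\mathscr{G}}(f)$ records a procedure that first multiplies the inputs of each monomial using a point of $\mathscr{G}(n_i)$ and then adds the $k$ resulting values using a point of $\mathscr{C}(k)$.

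For the structure maps, substituting polynomials $g_1,\dots,g_n$ into the variables of $f$ produces a polynomial which, once expanded into standard sum-of-monomials form, requires repeated applications of the distributive law. This is precisely the data encoded by the distributivity map $\lambda$ of an operad pair. I would therefore assemble the ring operad composition out of three ingredients: the additive composition of $\mathscr{C}$ for aggregating monomials, the multiplicative composition of $\mathscr{G}$ within each resulting monomial, and copies of $\lambda$ that record how multiplication distributes over addition during the expansion. Verification of the ring operad axioms (associativity, unit, equivariance) should then reduce, diagram by diagram, to the operad pair axioms for $(\mathscr{C},\mathscr{G})$ proved in \cite{may1977ring}.

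The $E_\infty$ claim follows almost formally from the product description: each $\mathscr{R}_{\mathscr{C},\mathscr{G}}(f)$ is a finite product of contractible spaces, hence contractible, and the freeness and cofibration conditions required of an $E_\infty$ ring operad follow coordinate-wise from the corresponding conditions on $\mathscr{C}$ and $\mathscr{G}$, using that products of free $\Sigma$-actions remain free and that finite products of cofibrations remain cofibrations.

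The main obstacle, and the step demanding the most care, is the isomorphism of algebra categories. An $\mathscr{R}_{\mathscr{C},\mathscr{G}}$-algebra structure on $X$ furnishes maps $\mathscr{C}(k)\times\prod_i\mathscr{G}(n_i)\times X^n\to X$ for every polynomial $f$; restricting to the purely additive polynomial $x_1+\cdots+x_k$ recovers a $\mathscr{C}$-action on $X$, while restricting to the pure monomial $x_1\cdots x_n$ recovers a $\mathscr{G}$-action. The real content is to show that the ring operad composition axiom, applied to substitutions such as $(x_1+\cdots+x_k)(y_1+\cdots+y_\ell)$, reproduces exactly the distributivity identity satisfied by an operad pair action, and conversely that no new coherence is imposed on a $(\mathscr{C},\mathscr{G})$-space when it is viewed as an $\mathscr{R}_{\mathscr{C},\mathscr{G}}$-algebra. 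The polynomial parametrization is tailored so that this verification reduces to a small family of universal polynomials that generate all structure maps; showing that these universal cases suffice and that the resulting functors between algebra categories are strict inverses is where I would expect the technical work to lie.
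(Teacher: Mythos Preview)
Your approach is essentially the paper's: the definition $\mathscr{R}_{\mathscr{C},\mathscr{G}}(f)=\mathscr{C}(k)\times\prod_i\mathscr{G}(n_i)$, the assembly of the ring operad composition from $\tilde{\gamma}_{\mathscr{C}}$, $\tilde{\gamma}_{\mathscr{G}}$, and the distributivity datum $\lambda$, and the algebra comparison via restriction to the purely additive and purely multiplicative polynomials all match Definition~\ref{defnoppairtoring} and Proposition~\ref{propringoperadandoperadpairsamealg} exactly. The paper's proof of the algebra isomorphism is in fact shorter than you anticipate: once one writes down the inverse functor (multiply via $\theta_\times$ within each monomial, then add via $\theta_+$), the check that these are mutually inverse is immediate from the unit axioms, with no need to isolate a generating family of ``universal polynomials''.

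The one place you are too quick is the $E_\infty$ verification. The paper's Definition~\ref{defnEinfty} has five conditions, and condition~(3) is not a freeness or cofibration statement but a span-lifting property: given $\alpha_1\in\mathscr{R}_{\mathscr{C},\mathscr{G}}(f_1)$ and $\alpha_2\in\mathscr{R}_{\mathscr{C},\mathscr{G}}(f_2)$ with a common image in some $\mathscr{R}_{\mathscr{C},\mathscr{G}}(g)$, one must produce a common preimage in some $\mathscr{R}_{\mathscr{C},\mathscr{G}}(h)$. This does not follow ``coordinate-wise'' from properties of $\mathscr{C}$ and $\mathscr{G}$; it requires the combinatorial Lemma~\ref{lemofspecialpullback}, which says that any two effective morphisms out of a special object to the same target differ by an automorphism of the source. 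The paper's argument takes $h$ to be the special object of the same type as $g$, uses that lemma to arrange $\phi_1\psi_1=\phi_2\psi_2$, and then exploits the crucial observation that for this particular ring operad every effective $\phi_*$ is a \emph{homeomorphism} (not merely a cofibration) to pull back $\alpha_1$ and verify that the result also maps to $\alpha_2$. This same homeomorphism observation is what makes conditions~(2) and~(5) immediate, so your ``products of cofibrations'' argument, while not wrong, misses the simpler reason.
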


    As a corollary, combining the above two theorems and the classical
    multiplicative infinite loop machine, we get a 
    multiplicative infinite loop machine for all $E_\infty$ ring operad.

    \begin{thm}[\ref{thmgpcom}]
        For any $E_\infty$ ring operad $\mathscr{C}$, there exists 
        a functor $\mathbb{E}$ from the category of $\mathscr{C}$-algebras to 
        the category of $E_\infty$ ring spectra such that 
        $$X\to \Omega^\infty\mathbb{E}(X)$$
        is a ring completion.
    \end{thm}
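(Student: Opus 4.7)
The plan is to bootstrap from the classical multiplicative infinite loop machine of May by chaining together the two preceding comparison theorems. I would begin by fixing once and for all a chosen $E_\infty$ operad pair $(\mathscr{C}_0,\mathscr{G}_0)$ in spaces, for instance the pair built from the linear isometries operad and the little convex bodies operad of \cite{may1977ring}. The second comparison theorem then produces an $E_\infty$ ring operad $\mathscr{R}:=\mathscr{R}_{\mathscr{C}_0,\mathscr{G}_0}$ whose category of algebras is isomorphic to the category of $(\mathscr{C}_0,\mathscr{G}_0)$-algebras in the classical sense.

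Given any $E_\infty$ ring operad $\mathscr{C}$ and a $\mathscr{C}$-algebra $X$, I would use (the proof of) Theorem \ref{thmcomp1.1.1} to transfer $X$ functorially to an $\mathscr{R}$-algebra $FX$ together with a natural zig-zag of weak equivalences connecting $X$ and $FX$. Under the isomorphism of algebra categories, $FX$ is canonically a $(\mathscr{C}_0,\mathscr{G}_0)$-algebra, to which the classical machine of \cite{may1977ring,may1982multiplicative} associates an $E_\infty$ ring spectrum; this is by definition $\mathbb{E}(X)$. Composing the three functorial constructions yields the desired functor from $\mathscr{C}$-algebras to $E_\infty$ ring spectra.

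For the ring completion claim, the classical theorem guarantees that the unit $FX \to \Omega^\infty\mathbb{E}(X)$ is a ring completion of $FX$. Composing with the natural zig-zag of weak equivalences between $X$ and $FX$, and using that the zig-zag consists of maps preserving the full $E_\infty$ ring structure (detected by $\mathscr{C}$) together with the universal characterization of ring completion, one concludes that $X\to \Omega^\infty\mathbb{E}(X)$ is a ring completion of $X$ itself.

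The main obstacle is extracting from Theorem \ref{thmcomp1.1.1} a genuinely functorial zig-zag rather than merely an object-wise homotopy equivalence. This should be supplied by the proof of that theorem through a two-sided bar construction built from $\mathscr{C}$ and $\mathscr{R}$, for instance by introducing an auxiliary ring operad mapping to both and forming the corresponding bar construction in the category of algebras. The delicate point is verifying that such a bar construction interacts correctly with the intertwined additive and multiplicative structure maps of a ring operad, and that the resulting natural transformations preserve enough structure to carry the ring completion property across the zig-zag; this is precisely the technical content that the ring operad formalism of this paper is designed to handle.
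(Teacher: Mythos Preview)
Your proposal is correct and follows essentially the same strategy as the paper. The paper fixes the canonical operad pair $(\mathscr{K},\mathscr{L})$ with associated ring operad $\mathscr{R}=\mathscr{R}_{\mathscr{K},\mathscr{L}}$, pulls back $X$ along the projection $\mathscr{C}\times\mathscr{R}\to\mathscr{C}$ to view it as a $\mathbb{D}$-algebra (where $\mathbb{D}$ is the monad for $\mathscr{C}\times\mathscr{R}$), and then writes down the explicit chain
\[
X\;\simeq\;B(\mathbb{D},\mathbb{D},X)\;\to\;B(\mathbb{R},\mathbb{D},X)\;\simeq\;B(\mathbb{K},\mathbb{K},B(\mathbb{R},\mathbb{D},X))\;\to\;\Omega^\infty B(\Sigma^\infty,\mathbb{K},B(\mathbb{R},\mathbb{D},X)),
\]
exactly the two-sided bar construction via the product operad that you anticipated. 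One small sharpening: because pullback along $\mathscr{C}\times\mathscr{R}\to\mathscr{C}$ is the identity on underlying spaces, no genuine backward leg is needed to land in $(\mathscr{C}\times\mathscr{R})$-algebras; the only nontrivial equivalence is the standard $X\simeq B(\mathbb{D},\mathbb{D},X)$, so the ``zig-zag'' collapses to a single forward chain of maps, and your worry about transporting the ring completion property across a zig-zag largely evaporates.
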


    As applications, we construct two ring operads in the category
    of small categories whose algebras are precisely 
    tight symmetric bimonoidal categories
    with strict zero and unit elements and bipermutative categories.
    
    \begin{thm}[\ref{thmsymbi}, \ref{propsymbiEinfty}, \ref{thmsynbigpcom}, \ref{thmbiperm}]
        There exists two ring operads in the category
        of small categories $\mathscr{S},\mathscr{P}$
        such that $\mathscr{S}$-algebras are precisely 
        tight symmetric bimonoidal categories
        with strict zero and unit elements and 
        $\mathscr{P}$-algebras are precisely bipermutative categories.
        Moreover, after applying the classifying space functor $B=|N(-)|$,
        both $B\mathscr{S}$ and $B\mathscr{P}$ are $E_\infty$.
        Therefore, the classifying space of a
        tight symmetric bimonoidal category
        with strict zero and unit elements admits a ring completion 
        to the zero space of some $E_\infty$ ring spectrum.
    \end{thm}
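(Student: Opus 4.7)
The plan addresses the four components in order: first I would construct $\mathscr{S}$, then identify its algebras, then verify the $E_\infty$ condition after passing to spaces, and finally replay the construction for $\mathscr{P}$ and invoke the group completion result \ref{thmgpcom}. For the construction, for each polynomial $f\in\mathbb{N}[x_1,\dots,x_n]$ in the chosen indexing collection I would take $\mathscr{S}(f)$ to be the groupoid whose objects are formal ``parsings'' of $f$ built from the binary symbols $\oplus,\otimes$ together with permutations of the variable occurrences and of the monomials, and whose morphisms are freely generated by symbolic associators, unitors, additive braiding, and the two distributors, modulo the Laplaza coherence identifications. The ring operad structure maps come from substitution of parsings, and the unit sits in $\mathscr{S}(x_1)$ as the trivial parsing. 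Checking the diagrams of Definition \ref{defoftingoperad} then reduces to the syntactic statement that substitution of parsings is strictly associative and equivariant for variable relabelling.

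The identification of algebras is the heart of Theorem \ref{thmsymbi}. An $\mathscr{S}$-algebra is a functor assignment $\mathscr{S}(f)\times A^n\to A$ compatible with the ring operad structure; restricting to $f=x_1+x_2$ and $f=x_1x_2$ extracts the two binary operations on $A$, while restricting to canonical morphisms between distinct parsings extracts the required natural coherence isomorphisms. Because $\mathscr{S}(0)$ and $\mathscr{S}(1)$ are singleton discrete categories, the resulting zero and unit are strict. The ring operad substitution axioms then translate into precisely the Laplaza coherence diagrams for a tight symmetric bimonoidal category, and conversely any such category evaluates parsings functorially, giving an isomorphism of algebra categories. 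For Theorem \ref{thmbiperm} I would replay the construction with $\mathscr{P}(f)$ obtained from $\mathscr{S}(f)$ by collapsing the appropriate coherence isomorphisms to identities, reflecting the extra strictness of bipermutative categories (strict associativity and unitality of both operations, strict commutativity for $\oplus$, and strict left distributivity).

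To prove Proposition \ref{propsymbiEinfty} I apply $B=|N(-)|$ componentwise. By coherence any two objects of $\mathscr{S}(f)$ are connected by a unique morphism, so each $\mathscr{S}(f)$ is a contractible groupoid and $B\mathscr{S}(f)$ is contractible. The freeness and cofibration hypotheses follow from the combinatorial model: the relevant symmetric-group-like actions permute genuinely distinct parsings and hence act freely on objects, while inclusions of full subgroupoids induce subcomplex inclusions of nerves, which are CW cofibrations. The same analysis survives the quotient to $\mathscr{P}$. The ring completion statement in Theorem \ref{thmsynbigpcom} then follows immediately by applying Theorem \ref{thmgpcom} to $B\mathscr{S}$ acting on $BA$ for any tight symmetric bimonoidal $A$.

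The main obstacle is the algebra identification step: one must arrange $\mathscr{S}$ so that its substitution axioms capture the Laplaza coherence system \emph{exactly} — no more, lest $\mathscr{S}$-algebras become stricter than tight symmetric bimonoidal categories, and no fewer, lest one obtain only a lax variant. Because the two distributors interact with associators and braiding through a large family of pentagonal and hexagonal diagrams, choosing the correct quotient to define $\mathscr{S}(f)$ requires careful bookkeeping governed by Laplaza's coherence theorem. A secondary difficulty is that, since the indexing set consists of polynomials rather than arities, the freeness condition involves several partial symmetry actions that do not assemble into a single $\Sigma_n$, and each must be checked directly from the explicit description of parsings.
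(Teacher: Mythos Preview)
Your overall strategy is sound and shares the paper's reliance on Laplaza coherence, but the paper organizes the argument differently in a way that sidesteps the main obstacle you identify at the end.

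The paper does not build $\mathscr{S}(f)$ by freely generating morphisms and quotienting by Laplaza relations. Instead it first constructs a ring operad $\mathscr{S}_{set}$ in \emph{sets}, where $\mathscr{S}_{set}(f)$ is the set of formal parsings of $f$ (elements of a free $\{+,\times\}$-algebra $A_n$ modulo strict zero and unit laws, lying over $f$ under the projection $p_n$), and then applies the indiscrete-category functor $E$ to obtain $\mathscr{S}:=E\mathscr{S}_{set}$. This has two payoffs. First, the ring operad axioms for $\mathscr{S}$ follow formally from those of $\mathscr{S}_{set}$ since $E$ is product-preserving and fully faithful; no coherence bookkeeping is needed in the construction. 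Second, contractibility of $B\mathscr{S}(f)$ is automatic because $BEX$ is always contractible, and the remaining conditions of Definition~\ref{defnEinfty} reduce to checking injectivity and pullback-type statements on the \emph{sets} $\mathscr{S}_{set}(f)$, which is the purely combinatorial verification in Proposition~\ref{propsymbiEinfty}.

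All the coherence work is then concentrated in the algebra identification (Theorem~\ref{thmsymbi}). There the paper does essentially what you propose, but in reverse: it introduces the free category $\tilde{\mathscr{S}}(f)$ generated by the structure morphisms (with distributors \emph{not} inverted), shows via Laplaza that any tight symmetric bimonoidal $C$ yields a functor from its image $\mathscr{S}'(f)$ in $\mathscr{S}(f)$, and then argues that the indiscrete $\mathscr{S}(f)$ is the free groupoid on $\mathscr{S}'(f)$ by exhibiting a terminal object in $\mathscr{S}'(f)$, so that the functor extends.

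Your approach would work but has a gap as stated: Laplaza's theorem asserts that certain diagrams commute in every symmetric bimonoidal category; it does not directly present a contractible groupoid. In particular the distributors in Laplaza's setup are not invertible, so the quotient by his axioms is not a groupoid, and your sentence ``By coherence any two objects of $\mathscr{S}(f)$ are connected by a unique morphism'' requires exactly the terminal-object and free-groupoid argument the paper supplies. A minor correction: bipermutative categories have strict \emph{right} distributivity, not left, and the additive symmetry is not strict.
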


    Here tight means the distributivity laws are given by 
    natural isomorphisms. Moreover, this new construction coincides with the classical one.

    \begin{thm}[\ref{comptwoconst}]
        The two constructions from bipermutative categories to classical $E_\infty$ ring spaces
        coincide up to homotopy.
    \end{thm}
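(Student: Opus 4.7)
The plan is to trace both constructions back to a common lax-functorial presentation of the bipermutative category, and then invoke the comparison machinery of the paper to deduce that the resulting $E_\infty$ ring spectra are homotopy equivalent. In more detail, the classical construction of \cite{may1982multiplicative} proceeds as follows: given a bipermutative category $\mathscr{A}$, one builds a lax functor $F_{\mathscr{A}}\colon\mathscr{F}\wr\mathscr{F}\to\mathbf{Cat}$, strictifies it, applies the classifying space functor levelwise, extracts an algebra over an $E_\infty$ operad pair $(\mathscr{C},\mathscr{G})$, and feeds it into the classical multiplicative infinite loop machine. The new construction instead applies $B$ directly to the $\mathscr{P}$-algebra structure on $\mathscr{A}$ (Theorem \ref{thmbiperm}), converts the resulting $B\mathscr{P}$-algebra on $B\mathscr{A}$ into an $\mathscr{R}_{\mathscr{C},\mathscr{G}}$-algebra via Theorem \ref{propringoperadandoperadpairsamealg}, and feeds it into the same machine.

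The crux is to make the identification already hinted at in the introduction fully precise: the components $\mathscr{P}(f)$ of the ring operad for bipermutative categories, together with their structure maps, repackage exactly the morphism data and composition of $\mathscr{F}\wr\mathscr{F}$ at polynomials $f$; equipping $\mathscr{A}$ with a $\mathscr{P}$-algebra structure is then equivalent to specifying May's lax functor $F_{\mathscr{A}}$. This is ultimately categorical bookkeeping, since both sides are determined by the choice of distributive normal form for each polynomial, and the associativity, commutativity, unit, and distributivity coherences demanded by a ring operad algebra translate term by term into the lax functoriality axioms for $F_{\mathscr{A}}$. From this dictionary one obtains a natural zigzag of $E_\infty$ ring space maps relating the two resulting $(\mathscr{C},\mathscr{G})$-algebras, and homotopy invariance of the classical machine promotes this to a weak equivalence of $E_\infty$ ring spectra.

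The main obstacle is controlling the strictification step: the classical route replaces $F_{\mathscr{A}}$ by a genuine functor before applying $B$, while the ring operad route never strictifies. I would address this by showing that the two-sided bar construction $B(\mathscr{P},\mathscr{P},\mathscr{A})$ modelling the $\mathscr{P}$-action provides a concrete model for May's Grothendieck-style strictification, using the standard fact that strictification of lax functors to $\mathbf{Cat}$ preserves levelwise weak equivalences, and then checking that Theorem \ref{thmcomp1.1.1} applies uniformly across the zigzag, so that the comparison lives within algebras over $E_\infty$-related ring operads rather than merely as spaces with some weak form of ring structure. Once this verification is in place, the two $E_\infty$ ring spectra agree up to homotopy, as claimed.
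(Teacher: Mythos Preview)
Your high-level strategy is correct and matches the paper: both constructions are compared by tracing them back to the same lax functor $F_{\mathscr{A}}\colon\mathscr{F}\wr\mathscr{F}\to\mathbf{Cat}$, and the comparison reduces to the homotopy invariance of May's strictification procedure (\cite[Theorem 3.4]{MAY1980299}).

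The paper's execution differs from yours in two ways. First, it does not connect the $\mathscr{P}$-algebra structure to the lax functor directly; instead it passes through the intermediate category of ring operators $\tilde{\mathscr{P}}$ (a strict 2-category built from $\mathscr{P}$ via Definition \ref{defnconsofringandcat}). A bipermutative category $A$ becomes a 2-functor $A'\colon\tilde{\mathscr{P}}\to\mathbf{Cat}$, and any such 2-functor determines a lax functor $\bar{A'}\colon\mathscr{F}\wr\mathscr{F}\to\mathbf{Cat}$ by evaluating at canonical representatives $t(f)\in\mathscr{P}(f)$. One then checks by inspection that $\bar{A'}$ is May's $F_A$. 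The paper then organizes everything into a single large diagram in which the classical construction is the left column, the ring-operad construction is the right column, and only the top-left square (the one just described) requires argument; the rest commutes on the nose.

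Second, the paper does \emph{not} model the strictification by a bar construction. Your suggestion that $B(\mathscr{P},\mathscr{P},\mathscr{A})$ provides a concrete model for the Grothendieck-style strictification is the most speculative step of your proposal: the bar construction lives in $\mathscr{P}$-algebras, not in $\mathscr{F}\wr\mathscr{F}$-categories, and you would need an additional argument to identify its image (after passing through $\nu$ and $B$) with May's strictified functor. The paper avoids this entirely: once the lax functors agree, May's own strictification theorem says the strictified genuine functor has the same homotopy type as the lax one, and the comparison is complete. This is a more economical route; your proposed detour through bar resolutions would need substantial additional work to make rigorous.
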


    With this operadic description of tight symmetric bimonoidal categories
    with strict zero and unit elements and bipermutative categories,
    we reprove the strictification theorem in \cite[Section VI.3]{may1977ring} from 
    symmetric bimonoidal categories to bipermutative categories.

    \begin{thm}[\ref{thmstrictification}]
        There is a functor $\Phi$ from the category of
        tight symmetric bimonoidal categories
        (with strict unit and zero objects) to the category of bipermutative categories 
        and a natural equivalence $\eta: \Phi C \to C$ of symmetric bimonoidal
        categories. 
    \end{thm}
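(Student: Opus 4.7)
The plan is to interpret $\Phi$ as an induced-algebra construction along the canonical map of ring operads $\pi \colon \mathscr{S} \to \mathscr{P}$ in $\mathrm{Cat}$. Since every bipermutative category is a tight symmetric bimonoidal category with strict zero and unit (simply treating strict distributivity and interchange as identity isomorphisms), the forgetful functor on algebras is induced by some such $\pi$; I would first write $\pi$ down explicitly, level by level, from the definitions of $\mathscr{S}$ and $\mathscr{P}$ in Theorems \ref{thmsymbi} and \ref{thmbiperm}, and check that it preserves all the structure maps of a ring operad.

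Given $\pi$, define $\Phi C$, for a tight symmetric bimonoidal category $C$ with strict $0$ and $1$, to be the induced $\mathscr{P}$-algebra $\pi_! C$. This is the reflexive coequalizer of $\mathscr{P}$-algebras
$$T_\mathscr{P} T_\mathscr{S} C \;\rightrightarrows\; T_\mathscr{P} C \;\longrightarrow\; \Phi C,$$
where $T_\mathscr{P}$ and $T_\mathscr{S}$ denote the free algebra monads associated with $\mathscr{P}$ and $\mathscr{S}$, and the two parallel arrows are $T_\mathscr{P} \alpha_\mathscr{S}$ (using the $\mathscr{S}$-action on $C$) and $\mu_\mathscr{P} \circ T_\mathscr{P} \pi_C$ (restriction along $\pi$ followed by $\mathscr{P}$-multiplication). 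Concretely, objects of $\Phi C$ should be formal polynomial expressions in objects of $C$ in $\mathscr{P}$-normal form, and morphisms should be generated by morphisms of $C$ together with the structural isomorphisms encoded by the $\mathscr{S}$-action. Define $\eta \colon \Phi C \to C$ as the evaluation functor: a formal polynomial goes to its value in $C$, and a morphism to its image under the tight symmetric bimonoidal structure maps. By construction $\eta$ is a map of $\mathscr{S}$-algebras, and it is natural in $C$ by the universal property of the coequalizer.

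To show that $\eta$ is an equivalence of symmetric bimonoidal categories, essential surjectivity is immediate, as every object of $C$ is the trivial polynomial representing itself. Full faithfulness is a coherence statement: any two parallel morphisms of $\Phi C$ between the same pair of polynomials must be sent to the same morphism in $C$, and every morphism in $C$ between two polynomial evaluations must lift uniquely to $\Phi C$. This is where the tightness hypothesis on $C$ enters essentially — it is precisely what supplies coherent distributivity isomorphisms, without which the lifting problem would not even be well-posed, and should ultimately follow from the uniqueness-up-to-canonical-isomorphism of maps between given shapes inside the ring operad $\mathscr{S}$.

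The main technical obstacle is this coherence argument for full faithfulness. Unlike classical Mac Lane coherence for symmetric monoidal categories, one must simultaneously coordinate additive and multiplicative coherence data across the distributivity natural isomorphisms, checking that all diagrams built from them commute in any tight symmetric bimonoidal category. A careful analysis of the combinatorial structure of $\mathscr{S}(f)$ — which is built into the definition of $\mathscr{S}$ used for Theorem \ref{thmsymbi} — should reduce the statement to a concrete, though lengthy, combinatorial verification, essentially packaging May's original coherence arguments into the operadic framework developed earlier in the paper.
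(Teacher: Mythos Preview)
Your approach is genuinely different from the paper's, and the difference is instructive. You build $\Phi C$ as the induced algebra $\pi_! C$ along $\pi:\mathscr{S}\to\mathscr{P}$, via the reflexive coequalizer of $T_\mathscr{P}T_\mathscr{S}C\rightrightarrows T_\mathscr{P}C$, and then expect to spend most of the effort on a coherence argument for full faithfulness of the evaluation map. The paper instead exploits a piece of structure you do not mention: a set-theoretic \emph{section} $s:\mathscr{P}\to\mathscr{S}$, sending each element of $A'_n$ to its unique reduced representative in $A_n$. This $s$ is a natural transformation of functors on $\widehat{\mathcal{R}}$ (though not a ring operad map), so it yields a functor $s:\mathbb{P}X\to\mathbb{S}X$, and one sets $\tilde\eta=\theta\circ s:\mathbb{P}X\to X$. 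The paper then simply \emph{defines} $\Phi X$ as the quotient of $\mathbb{P}X$ obtained by identifying two morphisms whenever $\tilde\eta$ sends them to the same morphism of $X$. Faithfulness of $\eta:\Phi X\to X$ is then tautological, and fullness and essential surjectivity follow in one line from the splitting $X\to\mathbb{P}X\to\Phi X\to X$ given by the unit. No separate coherence verification is needed; Laplaza's coherence has already been absorbed into the identification of $\mathscr{S}$-algebras with tight symmetric bimonoidal categories.

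There is also a gap in your plan that the section $s$ would close. Your evaluation map $\eta:\Phi C\to C$ is not produced by the universal property of $\pi_!$: that universal property gives maps from $\pi_! C$ into $\mathscr{P}$-algebras, and $C$ is only an $\mathscr{S}$-algebra. Concretely, to ``evaluate'' an element of $T_\mathscr{P}C$ in $C$ you must choose, for each $\mathscr{P}$-shape, a specific bracketing in $\mathscr{S}$---this is exactly a section $s$. Without making such a choice explicit and checking it is natural over $\widehat{\mathcal{R}}$, the evaluation functor is not well defined on the coequalizer, and in particular you cannot yet assert it is an $\mathscr{S}$-algebra map. Once you have $s$, the paper's shortcut is available and the anticipated ``lengthy combinatorial verification'' evaporates.
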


    \subsection{Organization of the paper}

    We now summarize the contents of the paper. 
    
    In Section 
    \ref{sectionfundation}, we build up the foundations of ring operad theory.

    The precise definition of ring operad and algebras over ring operads
    are given in Section \ref{Sectiondefn}. 
    Recall that an operad can be regarded as a functor on the 
    category $\Sigma$ of finite sets 
    and symmetric groups together with some structure maps.
    However, in the ring operad theory,
    the 
    category $\Sigma$
    is generalized to a rather complicated category 
    $\widehat{\mathcal{R}}$. 

    In section \ref{sectioncomparison}, we define the 
    notion of $E_\infty$ ring operad and state
    the most fundamental and useful theorem in the theory of ring operads:
    the Comparison
    Theorem \ref{thmcomp1.1.1}. We defer its proof to 
    Appendix \ref{appendix}.
    
    More precisely, we analyze the combinatorial properties of $\widehat{\mathcal{R}}$
    in Section \ref{Sectioncomb}. In particular, we construct
    a filtration of a sub-category $\widehat{\mathcal{R}}_{n.d.}$
    of $\widehat{\mathcal{R}}$ in Proposition \ref{propofnondegen}.
    Using this filtration, we construct a filtration of 
    the monad $\mathbb{C}X$ associated to an $E_\infty$ ring operad 
    $\mathscr{C}$ in Section \ref{Sectiondiffoperad}. This 
    plays an essential role in the proof of the Comparison Theorem
    \ref{thmcomp1.1.1}.

    In Section 
    \ref{sectionclassical}, we compare ring operad theory with 
    two classical theories: operad pair theory and category of ring operator
    theory.

    In Section 
    \ref{Sectionoperadpair}, we construct for any ($E_\infty$) operad pair 
    $(\mathscr{C},\mathscr{G})$,
    a (an $E_\infty$) ring operad $\mathscr{R}_{\mathscr{C},\mathscr{G}}$
    such that their categories of algebras are isomorphic.
    Therefore, applying the Comparison Theorem \ref{thmcomp1.1.1},
    we get the multiplicative infinite loop machine Theorem \ref{thmgpcom} for 
    any $E_\infty$ ring operad.

    In Section 
    \ref{Sectionringoperator}, we compare ring operad theory with 
    category of ring operator
    theory that was used in the construction
    from bipermutative categories to $E_\infty$ ring spaces in the 
    operad pair sense in \cite{TheconstructionofE_infringspacesfrombipermutativecategories}.
    More precisely, we construct for any $E_\infty$ ring operad $\mathscr{C}$
    a category of ring operators $\tilde{\mathscr{C}}$
    such that their categories of algebras have the same homotopy categories.
    We summarize this by the following diagram in which all functors induce
    equivalences on homotopy categories.
    \begin{center}
        \begin{codi}
            \obj {|(1)| \mathscr{C}\text{-spaces}
            &[5em]&[5em] |(2)| \text{special }\tilde{\mathscr{C}}\text{-spaces}\\[-1em]
            |(3)| (\mathscr{C}\times\mathscr{R}_{\mathscr{K},\mathscr{L}}) \text{-spaces}
            &&|(4)| \text{special }\widetilde{(\mathscr{C}\times\mathscr{R}_{\mathscr{K},\mathscr{L}})}=(\tilde{\mathscr{C}}\times_{\mathscr{F}\wr\mathscr{F}}\tilde{\mathscr{R}}_{\mathscr{K},\mathscr{L}})\text{-spaces}\\[-1em]
            |(5)|  \mathscr{R}_{\mathscr{K},\mathscr{L}}\text{-spaces}
            &&|(6)| \text{special } \tilde{\mathscr{R}}_{\mathscr{K},\mathscr{L}}
            =(\hat{\mathscr{L}}\wr \hat{\mathscr{K}})\text{-spaces}\\[-1em]
            &|(7)| (\mathscr{K},\mathscr{L})\text{-spaces}&\\
            };
            \mor 1 "\nu":-> 2;
            \mor 3 "\nu":-> 4;
            \mor 5 "\nu":-> 6;
            \mor 1 -> 3;
            \mor 2 -> 4;
            \mor 5 -> 3;
            \mor 6 -> 4;
            \mor 7 -> 5;
            \mor 7 -> 6;
        \end{codi}
    \end{center}

    In Section \ref{sectioncategory}, we apply the ring operad theory
    to classical category theory.
    
    In Section \ref{Sectionsymbimonoidal}, we construct a ring operad
    $\mathscr{S}$ in the category
    of small categories whose algebras are precisely 
    tight symmetric bimonoidal categories (defined in 
    \cite[Volume I, Definition 2.1.2]{johnson2021bimonoidal})
    with strict zero and unit elements such that the classifying space  
    $B\mathscr{S}$ is $E_\infty$.
    Here tight means the distributivity maps are isomorphisms.
    Therefore, we get a multiplicative infinite loop machine 
    for tight symmetric bimonoidal categories
    with strict zero and unit elements.

    In Section \ref{Sectionbipermutative}, we mimic the above construction
    to get a ring operad $\mathscr{P}$ whose algebras are 
    bipermutative categories. 
    Hence, we get a multiplicative infinite loop machine 
    for bipermutative categories. 
    We state a comparison between this new construction and the classical
    one in Remark \ref{comptwoconst}.
    Lastly, we use this description to 
    reprove the strictification theorem from
    symmetric bimonoidal categories to bipermutative categories.

    \section{Foundations of ring operads}\label{sectionfundation}

    \subsection{Definition of ring operads}\label{Sectiondefn}
    Recall that in classical operad theory, to describe the higher homotopy,
    we put all operators that we want to identify up to higher homotopy
    into a contractible space. This strategy also works in the multiplicative
    context. Let $X$ be a space with two binary operators $+,\times$.
    In order to describe a higher homotopical distributivity law 
    $$a\times b+a\times c\simeq a\times(b+c),$$
    we construct a contractible space
    of trinary operators over $X$ which contains the above
    two trinary operators
    \begin{align*}
        (a,b,c)\to a\times b+a\times c,\\
        (a,b,c)\to a\times (b+c).
    \end{align*}
    Therefore, to describe all higher homotopical associativity, commutativity and 
    distributivity laws, we need a collection of contractible
    spaces of multivariable operators over $X$.

    Different from classical operad theory, this collection of 
    contractible spaces is not indexed by natural numbers but by
    polynomials.

    \begin{warn}
    Less formally, each polynomial $f$ determines a type 
    of multivariable operators over $X$ and then we can assign
    to each $f$ a space that parameterizes all 
    multivariable operator operators of that type. However,
    for some polynomials, the associated space is not 
    necessarily contractible. For example,
    consider a path in the space associated to $x+y$
    connecting
    $$(x,y)\mapsto x+y \text{ \ \ and \ \ } (x,y)\mapsto y+x.$$
    Then after evaluating both $x$ and $y$ to be $x$,
    the above path gives a loop in 
    the space associated to $2x$
    starting at $$x\mapsto x+x.$$
    This loop is not necessarily contractible in general.
    \end{warn}

    Therefore, to define a ring operad as a collection
    of contractible spaces indexed by polynomials,
    we expect the index set to only consist of
    those good polynomials whose associated spaces 
    are indeed contractible. It turns out the
    following collection of good polynomials is the largest 
    one that is closed under composition.

    \begin{notn}\label{rn}
        Fix a set of variables $\{a_{i,j}:1\leqslant i\leqslant j\}$
    and let $\mathbb{Z}[a_{1,n},a_{2,n},\cdots,a_{n,n}]$ for $n\geqslant0$
    be the polynomial ring on $n$ variables.
    Consider the subset 
    $\mathcal{R}(n)\subset\mathbb{Z}[a_{1,n},a_{2,n},\cdots,a_{n,n}]$, which
    consists of all polynomials $f$ such that $f$ is a finite sum of different
    monic monomials with positive degree and 
    each monomial in this sum is a product of different
    variables; that is, 
    $$\mathcal{R}(n)=\bigg\{\sum_{I=(i_1,i_2,\cdots,i_n)\in \{0,1\}^n\setminus \{0\}^n} 
    \varepsilon_I a_{1,n}^{i_1}\cdots a_{n,n}^{i_n}\in
    \mathbb{Z}[a_{n,1},\cdots,a_{n,n}] : \varepsilon_I = 0\text{ or } 1\bigg\}.$$
    To avoid confusion, let $0_n\in\mathcal{R}(n)$ denote the 
    zero element in $\mathcal{R}(n)$.
    We write $|f|=n$ for $f\in\mathcal{R}(n)$.
    \end{notn}
    
    Note that the collection of all monomials with positive degree that is a product of different
    variables is one-to-one corresponding to the set of non-empty subset of $n$ variables 
    $\{a_{1,n},a_{2,n},\cdots,a_{n,n}\}$, so $\mathcal{R}(n)$ is one-to-one
    corresponding to the power set $P(P\{a_{1,n},a_{2,n},\cdots,a_{n,n}\} \setminus \{\emptyset\})$,
    and thus $\mathcal{R}(n)$ is finite with cardinality $2^{2^n-1}$.

    There is also a non-symmetric operad structure on $\mathcal{R}(n)$ defined by 
    the composition of polynomials, which generalize the sum of natural
    numbers in the classical theory.
    \begin{notn}
        The composition of polynomials is given by
        \begin{gather*}
            \circ: \mathcal{R}(k)\times\mathcal{R}(j_1)\times\cdots\times\mathcal{R}(j_k)
            \to\mathcal{R}(j_+)\\
            (g,f_1,\cdots,f_k)\mapsto g(f_1(a_{j_+,1},\cdots,a_{j_+,j_1}),\cdots,f_k(a_{j_+,j_1+\cdots+j_{k-1}+1},\cdots,a_{j_+,j_+}))
        \end{gather*}
        where $j_+:=j_1+j_2+\cdots+j_k$. 
    \end{notn}

    \begin{lem}
        $\coprod \mathcal{R}(n)$ is closed under composition. 
    \end{lem}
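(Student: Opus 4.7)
The plan is to verify directly, for any $g\in\mathcal{R}(k)$ and $f_i\in\mathcal{R}(j_i)$, that $g\circ(f_1,\ldots,f_k)$ satisfies the three conditions defining $\mathcal{R}(j_+)$: it is a sum of distinct monic monomials of positive degree, each a product of distinct variables. The whole argument will rest on one structural observation about the substitution implicit in the composition.

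First I would unfold the definitions. Write $g=\sum_{J\subseteq\{1,\ldots,k\},\,J\neq\emptyset}\varepsilon_J\prod_{i\in J}a_{i,k}$ with $\varepsilon_J\in\{0,1\}$, and similarly each $f_i=\sum_{I}\varepsilon^{(i)}_I\prod_{s\in I}a_{s,j_i}$. After the re-indexing prescribed by the composition formula, the substituted polynomial $f_i$ uses only the variables $a_{j_+,\,j_1+\cdots+j_{i-1}+s}$ for $s\in\{1,\ldots,j_i\}$; call this set of variables the $i$-th block. The essential observation is that the $k$ blocks are pairwise disjoint subsets of $\{a_{j_+,1},\ldots,a_{j_+,j_+}\}$.

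Next I would expand $g\circ(f_1,\ldots,f_k)$ into a sum indexed by tuples $(J,(I_i)_{i\in J})$: each such tuple contributes $\varepsilon_J\prod_{i\in J}\varepsilon^{(i)}_{I_i}$ times the monomial $\prod_{i\in J}\prod_{s\in I_i}a_{j_+,\,j_1+\cdots+j_{i-1}+s}$. Because $J\neq\emptyset$ and each $I_i$ is nonempty, this monomial has positive degree; because within each block it is squarefree (the $f_i$'s monomials are) and the blocks are disjoint, the product is squarefree as well.

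Finally I would argue that distinct tuples yield distinct monomials, so no collapsing can push a coefficient above $1$. Given a monomial appearing in the expansion, the set $J$ is recovered as the collection of blocks it touches, and for each $i\in J$ the subset $I_i$ is recovered from the variables of the $i$-th block it uses; hence the tuple is uniquely determined by the monomial. Thus the coefficient of every monomial in $g\circ(f_1,\ldots,f_k)$ is a single product of $0$'s and $1$'s, lying in $\{0,1\}$, and the result is in $\mathcal{R}(j_+)$. The only potential obstacle is precisely this uniqueness step, which would fail without the disjointness of the blocks; this is what makes the specific indexing in the composition formula essential.
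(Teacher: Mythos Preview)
Your argument is correct, but it proceeds along a different line from the paper's. The paper first gives a characterization of $\mathcal{R}(n)$ by three differential conditions---vanishing of all second partials in each variable, each iterated mixed partial evaluated at zero lying in $\{0,1\}$, and vanishing constant term---and then checks these for $g(f_1,\ldots,f_k)$ by repeated use of the chain rule, exploiting $\frac{\partial^2}{\partial a^2}g=0$ and $f_s(0,\ldots,0)=0$ at the key steps. Your approach instead expands the composite explicitly as a sum over tuples $(J,(I_i)_{i\in J})$ and reads off the three defining properties of $\mathcal{R}(j_+)$ directly from the block structure of the variable sets: disjointness of blocks gives squarefreeness, nonemptiness of $J$ and of each $I_i$ gives positive degree, and the recoverability of $(J,(I_i))$ from the monomial gives distinctness and hence $\{0,1\}$ coefficients. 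Your route is more elementary and makes the combinatorics of the composition transparent; the paper's derivative criterion is a bit more indirect here but has the advantage of isolating a reusable intrinsic description of $\mathcal{R}(n)$.
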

    
    \begin{proof}
        By definition, a polynomial 
        $f\in\mathbb{Z}[a_{1,n},a_{2,n},\cdots,a_{n,n}]$ is contained in 
        $\mathcal{R}(n)$ if and only if the following conditions hold:
        \begin{itemize}
            \item[(1)] $\frac{\partial^2}{\partial a_{k,n}^2}f=0$ for each $k=1,2,\cdots,n$,
            \item[(2)] $$\frac{\partial^{|I|}}{\partial a_{I,n}}f(0,\cdots,0)=0\text{ or }1$$ 
                       for each 
                       sequence of finite length $I=(i_1,\cdots,i_l)$, 
                       $1\leqslant i_1<\cdots<i_l\leqslant n$, 
                       $l\geqslant1$,
            \item[(3)] $f(0,0,\cdots,0)=0$.
        \end{itemize}
        Using this criterion, we can check that $g(f_1,\cdots,f_k)$ is contained
        in $\mathcal{R}(j_+)$ and the associativity diagram commutes.
        
        Indeed, when $l=j_1+\cdots +j_{s-1}+r$, $ 1\leqslant r\leqslant j_{s}$,
        \begin{align*}
            &\frac{\partial^2}{\partial a_{l,j_+}^2}g(f_1,\cdots,f_k)\\
            =&\frac{\partial}{\partial a_{l,j_+}}\bigg(\frac{\partial}{\partial a_{s,k}}g(f_1,\cdots,f_k)\cdot \frac{\partial}{\partial a_{r,j_s}}f\bigg)\\
            =&\frac{\partial^2}{\partial a_{s,k}^2}g(f_1,\cdots,f_k)\cdot \bigg(\frac{\partial}{\partial a_{r,j_s}}f\bigg)^2+
            \frac{\partial}{\partial a_{s,k}}g(f_1,\cdots,f_k)\cdot \frac{\partial^2}{\partial a_{r,j_s}^2}f\\
            =&0.
        \end{align*}
    
        Also, when $I=(i_{1,1},\cdots,i_{r_1,1},\cdots, i_{1,k},\cdots,i_{r_k,k})$
        with $$j_1+\cdots +j_{s-1}< i_{1,s}< \cdots<
        i_{r_s,s}\leqslant j_1+\cdots +j_{s}$$
        for any $1\leqslant s \leqslant k$,
        we write $i'_{t,s}:=i_{t,s}-(j_1+\cdots +j_{s-1})$
        and $$L:=(l_1,\cdots,l_m)=\{s:j_s\geqslant1\}.$$ Then
        \begin{align*}
            &\frac{\partial^{|I|}}{\partial a_{I,j_+}}g(f_1,\cdots,f_k)(0,\cdots,0)\\
            =&\biggl(\frac{\partial^m}{\partial a_{L,k}}g(f_1,\cdots,f_k)\cdot
            \prod_{j_s\geqslant1} \frac{\partial^{r_{s}}}{\partial a_{i'_{1,s},j_s}\cdots\partial a_{i'_{r_{s},s},j_{s}}}f_{s}\biggr)(0,\cdots,0)\\
            =&0 \text{ or } 1.
        \end{align*}
        Here we need to use the fact that $f_s(0,\cdots,0)=0$.
    
        Lastly, we automatically have
        \begin{align*}
            g(f_1,\cdots,f_k)(0,\cdots,0)=0.
        \end{align*}
        Thus $\gamma$ gives a non-symmetric operad structure.
    \end{proof}

    The $\Sigma_j$ actions in the classical operad theory seem to be more
    difficult in the multiplicative context because there are more relations
    between these polynomials. To summarize these relations, we define 
    the following small category.
    
    \begin{notns}
        Let $\widehat{\mathcal{R}}$ be a small category with object set 
        $\displaystyle\coprod_{n\geqslant0}\mathcal{R}(n)$. 
        Let $\mathbf{Set}_e$ be the category of 
        set with two based points $\{0,e\}$ (that is the 
        under category of $\{0,e\}$) and 
        $$\mathbf{n}_e:=\{0,e,1,2\cdots,n\}$$ be 
        and object in $\mathbf{Set}_e$ for $n\geqslant0$.
    Note that for each map $\phi\in \mathbf{Set}_e(\mathbf{m}_e, \mathbf{n}_e)$
    the induced map 
    \begin{align*}
        \phi_*:\mathbb{Z}[a_{1,m},a_{2,m},\cdots,a_{m,m}]
        &\to\mathbb{Z}[a_{1,n},a_{2,n},\cdots,a_{n,n}]\\
        f(a_{1,m},a_{2,m},\cdots,a_{m,m})&\mapsto
        f(a_{\phi(1),n},a_{\phi(2),n},\cdots,a_{\phi(m),n})
    \end{align*}
    is functorial; that is $\phi_*\circ\psi_*=(\phi\circ\psi)_*$.
    Here $a_{0,n}=0$ and $a_{e,n}=1$.

    For each $f_m\in \mathcal{R}(m)$, $f_n\in \mathcal{R}(n)$, we define the hom-set to be
    \begin{align*}
        \widehat{\mathcal{R}}(f_m,f_n):=
        \{(f_m,\phi,f_n): \phi\in \mathbf{Set}_e(\mathbf{m}_e, \mathbf{n}_e),
        \phi_*(f_m)=f_n\}.
    \end{align*}
    The unit and composition are induced by those in $\mathbf{Set}_e$.

    We say a map $\phi\in \mathbf{Set}_e(\mathbf{m}_e, \mathbf{n}_e)$
    is effective if $\phi^{-1}(0)=\{0\},\phi^{-1}(e)=\{e\}$
    and denote the sub-category of effective morphisms
    by $ \widehat{\mathcal{R}}_{eff}$.
    Moreover, we say a map $\phi\in \mathbf{Set}_e(\mathbf{m}_e, \mathbf{n}_e)$
    singular if it is surjective
    and if $1\leqslant i < j \leqslant m$ with $\phi(i),\phi(j)\geqslant 1$,
    then $\phi(i)<\phi(j)$. 
    Then any $\phi\in \mathbf{Set}_e(\mathbf{m}_e, \mathbf{n}_e)$
    has a unique decomposition
    $\phi=p\circ\sigma$ such that $\sigma$ is singular
    and $p$ is effective. We say $\phi=p\circ\sigma$
    is the canonical decomposition of $\phi$.
    \end{notns}
    
    \begin{lem}\label{lemofinj}
        (1) For $f\in\mathbb{Z}[a_{1,m},a_{2,m},\cdots,a_{m,m}]$, if 
        $\phi_*f\in\mathcal{R}(n)$ for some injection $\phi:\mathbf{m}_e\to\mathbf{n}_e$,
        then $f\in\mathcal{R}(m)$.

        (2) For $(f_m,\phi,f_n)\in \widehat{\mathcal{R}}(f_m,f_n)$
        with $\phi=p\circ\sigma$ the canonical decomposition of $\phi$,
        $\sigma_*f_m\in\coprod \mathcal{R}(n)$.
    \end{lem}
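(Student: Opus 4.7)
For part (1), the plan is to observe that any morphism in $\mathbf{Set}_e$ preserves the two basepoints, so an injective $\phi:\mathbf{m}_e\to\mathbf{n}_e$ is automatically effective and restricts to an injection $\{1,\dots,m\}\hookrightarrow\{1,\dots,n\}$. Then $\phi_*$ is a pure renaming of variables, sending each monomial of $f$ bijectively to a distinct monomial of $\phi_*f$ with the same coefficient. Each defining condition of $\mathcal{R}(m)$---squarefreeness, coefficients in $\{0,1\}$, positive-degree monomials, vanishing at the origin---is preserved and reflected by $\phi_*$, so the hypothesis $\phi_*f\in\mathcal{R}(n)$ directly forces $f\in\mathcal{R}(m)$.

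For part (2), I would compute $\sigma_*f_m$ explicitly and then exploit the non-negativity of its coefficients against the hypothesis $p_*(\sigma_*f_m)=f_n\in\mathcal{R}(n)$. Set $S:=\sigma^{-1}(0)\cap\{1,\dots,m\}$ and $T:=\sigma^{-1}(e)\cap\{1,\dots,m\}$; the singular condition ensures $\sigma$ restricts to an order-preserving bijection $\{1,\dots,m\}\setminus(S\cup T)\xrightarrow{\sim}\{1,\dots,k\}$. Writing $f_m=\sum_{I\in\mathcal{I}}\prod_{i\in I}a_{i,m}$ as a sum of distinct squarefree monomials each with coefficient $1$, one obtains
\[
    \sigma_*f_m \;=\; \sum_{J\subset\{1,\dots,k\}} c_J\,\prod_{j\in J}a_{j,k},
    \qquad
    c_J \;=\; \#\bigl\{A\subset T : \sigma^{-1}(J)\cup A\in\mathcal{I}\bigr\}\in\mathbb{Z}_{\geq 0}.
\]
In particular $\sigma_*f_m$ is already squarefree; it remains to show $c_J\in\{0,1\}$ and $c_\emptyset=0$.

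Here I would leverage the non-negativity of the $c_J$ to rule out cancellation after applying $p_*$. If some $J$ had $c_J\geq 1$ with $p|_J$ non-injective, then $\prod_{j\in J}a_{p(j),n}$ would contribute a non-squarefree monomial to $f_n$ with strictly positive coefficient, which cannot be canceled by any other $c_{J'}\geq 0$ and contradicts $f_n\in\mathcal{R}(n)$. So $p|_J$ is injective whenever $c_J\neq 0$, and then the coefficient of $\prod_{l\in p(J)}a_{l,n}$ in $f_n$ is a non-negative integer sum of the relevant $c_{J'}$'s bounded by $1$, forcing each $c_J\in\{0,1\}$. Finally, $c_\emptyset$ is the constant term of $f_n$, which vanishes. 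The main delicate point is the non-negativity of the $c_J$ themselves: merging several monomials of $f_m$ via the substitution $a_{i,m}\mapsto 1$ for $i\in T$ never introduces a sign, which is precisely what lets the $0/1$-coefficient and squarefreeness conditions on $f_n$ pull back to $\sigma_*f_m$ without any injectivity hypothesis on $p$.
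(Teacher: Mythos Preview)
Your proposal is correct and follows essentially the same approach as the paper. For part (1), both you and the paper reduce to the observation that an injective $\phi$ acts as a bijection on monomials; for part (2), both arguments use that the coefficients $c_J$ of $\sigma_*f_m$ are non-negative integers, so the $0/1$-coefficient and no-constant-term conditions on $f_n=p_*\sigma_*f_m$ cannot be achieved through cancellation and must already hold for $\sigma_*f_m$. Your treatment is more explicit in writing out the formula for $c_J$ and in separately handling the case where $p|_J$ fails to be injective, whereas the paper argues directly at the level of the individual monomial summands $m_k$ of $f_m$, showing the nonzero images $\sigma_*m_k$ are pairwise distinct and never equal to $1$; these are the same statement in different packaging, since $c_J\leq 1$ is exactly distinctness and $c_\emptyset=0$ is exactly the exclusion of $1$.
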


    \begin{proof}
        When $\phi$ is injective, $\phi_*$ is also injective on monomials.
        Therefore, to show (1), it suffices to check this lemma for monomials $f$, which 
        holds immediately by definition.

        To see (2), we write $f_m$ as a sum of different monic 
        monomials $m_k$. Then it suffices to show that 
        nonzero elements in $\sigma_* m_k$ are 
        distinct monic monomials in $\coprod \mathcal{R}(n)$.

        Indeed, since $\sigma$ is singular, $\sigma_* m_k$
        is either $1$ or a 
        monic monomial in $\coprod \mathcal{R}(n)$ (probably zero).
        If $\sigma_* m_k=1$, then 
        $$p_*\sigma_*f_m(0,\cdots,0)=f_n(0,\cdots,0)\neq 0.$$
        If $\sigma_* m_k=\sigma_* m_l\not\in\{0,1\}$,
        then the coefficient of monomial $p_*\sigma_* m_k$
        in the standard decomposition of $f_n$ is at least $2$.
        Both contradict with $f_n\in\coprod \mathcal{R}(n)$.
    \end{proof}

    We will further focus on the properties of $\widehat{\mathcal{R}}_{eff}$ in 
    section \ref{Sectioncomb}.

    In the rest of this section, we modify the definitions in 
    classical operad theory to multiplicative context.
    Compare with \cite[Chapter 1,2]{may1972geometry}.

    Let $(\mathscr{V},\otimes,*)$ be a symmetric monoidal category such that 
    the unit object $*$ is also terminal.
    \begin{defn}\label{defoftingoperad}
        A ring operad $\mathscr{C}$ in $\mathscr{V}$ consists of a functor
        $\mathscr{C}:\widehat{\mathcal{R}}\to \mathscr{V}$,
        a unit map $\eta:*\to\mathscr{C}(a_{1,1})$,
        together with compositions
        $$\gamma:\mathscr{C}(g)\otimes\mathscr{C}(f_1)\otimes\cdots\otimes\mathscr{C}(f_k)\to
        \mathscr{C}(g(f_1,f_2,\cdots,f_k))$$ for all $f_1,f_2,\cdots,f_k\in
        \mathrm{Obj}(\widehat{\mathcal{R}})$, 
        $g\in\mathcal{R}(k)\subset\mathrm{Obj}(\widehat{\mathcal{R}})$,
        $k=1,2,\cdots$,
        such that $\mathscr{C}(0_n)\cong*$ for all $n\geqslant 0$ and
        the following diagrams commute:

        (1) Associativity diagram:

        For $g\in\mathcal{R}(k)$, $f_s\in\mathcal{R}(j_s)$, $s=1,2,\cdots,k$,
        $h_t\in\mathrm{Obj}(\widehat{\mathcal{R}})$, $t=1,2,\cdots,\Sigma j_s$,
        let $J_s:=j_1+\cdots+j_s$, $F_s:=f_s(h_{J_{s-1}+1},\cdots,h_{J_{s}})$,
        $G:=g(f_1,f_2,\cdots,f_k)$. Then we have 
        $G(h_1,\cdots,h_J)=g(F_1,\cdots,F_k)=:\tilde{G}$,
            \begin{center}
                \begin{codi}
                    \obj{
                        |(0)|\mathscr{C}(g)\otimes(\displaystyle\bigotimes_{s=1}^k\mathscr{C}(f_s))\otimes(\displaystyle\bigotimes_{t=1}^j\mathscr{C}(h_t))
                        &[12em] |(1)| \mathscr{C}(G)\otimes(\displaystyle\bigotimes_{t=1}^j\mathscr{C}(h_t))\\[2em]
                        |(0')|\mathscr{C}(g)\otimes\displaystyle\bigotimes_{s=1}^k(\mathscr{C}(f_s)\otimes(\displaystyle\bigotimes_{t=1}^{j_s}\mathscr{C}(h_{J_{s-1}+t})))
                        & \\[2em]
                        |(2)|\mathscr{C}(g)\otimes\displaystyle\bigotimes_{s=1}^k(\mathscr{C}(F_s))
                        &|(3)| \mathscr{C}(\tilde{G})\\
                        };
                    \mor 0 \gamma\otimes\mathrm{id}:-> 1;
                    \mor 0 "\text{shuffle}":-> 0';
                    \mor 0' "\mathrm{id}\otimes\gamma^k":-> 2;
                    \mor 1 \gamma:-> 3;
                    \mor 2 \gamma:-> 3;
                \end{codi}
            \end{center}

        (2) Unit diagrams:
            
        For $g\in\mathcal{R}(k) $,
            \begin{center}
                \begin{codi}
                    \obj{
                        |(0)|\mathscr{C}(g)\otimes(*)^{\otimes k}&[3em] |(1)|\mathscr{C}(g)\\
                        |(2)|\mathscr{C}(g)\otimes(\mathscr{C}(a_{1,1}))^{\otimes k}&\\
                        };
                    \mor 0 \cong:-> 1;
                    \mor[left] 0 "\text{id}\otimes\eta^k":-> 2;
                    \mor[below right] 2 "\gamma":-> 1;
                \end{codi}
            \end{center}
        and
            \begin{center}
                \begin{codi}
                    \obj{
                        |(0)|*\otimes\mathscr{C}(g)&[3em] |(1)|\mathscr{C}(g)\\
                        |(2)|\mathscr{C}(a_{1,1})\otimes\mathscr{C}(g)&\\
                        };
                    \mor 0 \cong:-> 1;
                    \mor[left] 0 "\eta\otimes\text{id}":-> 2;
                    \mor[below right] 2 "\gamma":-> 1;
                \end{codi}
            \end{center}
        (3) Equivariance diagrams:

        Let $g_m\in\mathcal{R}(m)$, $g_n\in\mathcal{R}(n)$, 
        $f_1\in\mathcal{R}(j_1)$, $h_1\in\mathcal{R}(r_1)$, $\cdots$, $f_n\in\mathcal{R}(j_n)$, $h_n\in\mathcal{R}(r_n)$
        and $(g_m,\psi,g_n)\in\mathrm{Mor}(\widehat{\mathcal{R}})$.
        When $\phi^{-1}(e)=e$, let $f_0=0_0$. Then,
            \begin{center}
                \begin{codi}
                    \obj {|(0)|\mathscr{C}(g_m)\otimes\mathscr{C}(f_1)\otimes\cdots\otimes\mathscr{C}(f_n)
                    &[12em]|(1')| \mathscr{C}(g_n)\otimes\mathscr{C}(f_1)\otimes\cdots\otimes\mathscr{C}(f_n) \\
                    |(1)| \mathscr{C}(g_m)\otimes\mathscr{C}(f_1)^{\otimes|\psi^{-1}(1)|}\otimes\cdots\otimes\mathscr{C}(f_n)^{\otimes|\psi^{-1}(n)|}&\\
                    |(2)| \mathscr{C}(g_m)\otimes\mathscr{C}(f_{\psi(1)})\otimes\cdots\otimes\mathscr{C}(f_{\psi(m)})&\\
                    |(3)| \mathscr{C}(g_m(f_{\psi(1)},\cdots,f_{\psi(m)}))
                    &|(3')| \mathscr{C}(g_n(f_1,f_2,\cdots,f_n)) \\};
                    \mor 0 "\mathscr{C}(\psi)\otimes \text{id}":-> 1';
                    \mor 0 "\text{id}\otimes\Delta":-> 1;
                    \mor 1 "\text{shuffle}":-> 2;
                    \mor 2 "\gamma":-> 3;
                    \mor 1' "\gamma":-> 3';
                    \mor 3 "\text{id}":-> 3';
                \end{codi}
            \end{center}

        When $\phi$ is singular with $\phi^{-1}(0)=0$, 
        let $f_e=a_{1,1}$. Then 
            \begin{center}
                \begin{codi}
                    \obj {|(0)|\mathscr{C}(g_m)\otimes\mathscr{C}(f_1)\otimes\cdots\otimes\mathscr{C}(f_n)
                    &[12em]|(1')| \mathscr{C}(g_n)\otimes\mathscr{C}(f_1)\otimes\cdots\otimes\mathscr{C}(f_n) \\
                    |(2)| \mathscr{C}(g_m)\otimes\mathscr{C}(f_{\psi(1)})\otimes\cdots\otimes\mathscr{C}(f_{\psi(m)})&\\
                    |(3)| \mathscr{C}(g_m(f_{\psi(1)},\cdots,f_{\psi(m)}))
                    &|(3')| \mathscr{C}(g_n(f_1,f_2,\cdots,f_n)) \\};
                    \mor 0 "\mathscr{C}(\psi)\otimes \text{id}":-> 1';
                    \mor 0 "\text{id}\otimes\eta":-> 2;
                    \mor 2 "\gamma":-> 3;
                    \mor 1' "\gamma":-> 3';
                    \mor 3 "\mathscr{C}(\tilde{\psi})":-> 3';
                \end{codi}
            \end{center}
        Here we let $\phi^{-1}(e)=\{e\}\cup\{i_1<\cdots<i_s\}$
        and let $j'_t:=j_{\psi(t)}$ for $t=1,2,\cdots,m$ 
        with $j_e=1$. Then we define 
        $\tilde{\psi}:\mathbf{j'_1+\cdots+j'_m}\to \mathbf{j_1+\cdots+j_n}$
        by sending 
        $$\sum_{l=1}^{i_k}j'_{l}\mapsto e$$
        for $k=1,\cdots,s$
        and sending 
        other elements bijectively order-preservingly
        to $\{1,\cdots,n\}\subset\mathbf{n}_e$.

        For $(f_1,\phi_1,h_1),\cdots,(f_n,\phi_n,h_n)\in\mathrm{Mor}(\widehat{\mathcal{R}})$,
            \begin{center}
                \begin{codi}
                    \obj {|(1)|\mathscr{C}(g_n)\otimes\mathscr{C}(f_1)\otimes\cdots\otimes\mathscr{C}(f_k)&[10em]
                          |(2)| \mathscr{C}(g_n(f_1,\cdots,f_k)) \\
                          |(1')|\mathscr{C}(g_n)\otimes\mathscr{C}(h_1)\otimes\cdots\otimes\mathscr{C}(h_k)
                          & |(2')| \mathscr{C}(g_n(h_1,\cdots,h_k))\\};
                    \mor 1 "\text{id}\otimes\mathscr{C}(\phi_1)\otimes\cdots\otimes\mathscr{C}(\phi_k)":-> 1';
                    \mor 2 "\mathscr{C}(\phi_1\oplus\cdots\oplus \phi_k)":-> 2';
                    \mor 1 "\gamma":-> 2;
                    \mor 1' "\gamma":-> 2';
                \end{codi}    
            \end{center}
    
        Here the diagonal morphisms $\mathscr{C}(f_s)\to \mathscr{C}(f_s)^{\otimes |\psi^{-1}(s)|}$ is defined to be 
        the counit $\mathscr{C}(f_s)\to *$ (recall that $*$ is terminal) if $\psi^{-1}(s)$ is empty, 
        and 
        \begin{align*}
            \phi_1\oplus\cdots\oplus \phi_k:&\mathbf{j_1+\cdots+j_n}\to \mathbf{r_1+\cdots+r_n}\\
            x+\sum_{t=1}^kj_{t}&\mapsto\begin{cases}
                \phi_{k+1}(x)+\sum_{t=1}^k r_t &\text{ if } 1\leqslant x\leqslant j_{k+1} \text{ and } \phi_{k+1}(x)\not\in\{0,e\}, \\
                \phi_{k+1}(x) &\text{ if } \phi_{k+1}(x)\in\{0,e\} \\
            \end{cases}
        \end{align*}
        defines morphism
        \begin{align*}
            (g_n(f_1,\cdots,f_k),\phi_1\oplus\cdots\oplus \phi_n,g_n(h_1,\cdots,h_k))&\in\mathrm{Mor}(\widehat{\mathcal{R}}).
        \end{align*}

        Moreover, a morphism between two ring operads is a natural transformation
        between functors which preserves all other structure maps.
    \end{defn}
        
    From now on, we assume our ground category 
    $\mathscr{V}$ to be cocomplete and also assume that 
    the symmetric monoidal structure $\bullet\otimes\bullet$
    is cocontinuous on each variable. 
    Let $\kappa_0, \kappa_e$ be two copies of $*$
    and let $S^0:=\kappa_0\coprod\kappa_e$.
    We define $\mathscr{V}_e$ to be the under category of $\mathscr{V}$
    with respect to $S^0$; 
    that is the category of objects $X$ in $\mathscr{V}$
    with two specific morphisms 
    \begin{align*}
        \epsilon: \kappa_e &\to X\\
        \iota: \kappa_0 &\to X
    \end{align*}
    and morphisms that preserve $\epsilon$ and $\iota$.

    \begin{rem}
        In the ring operad context, slightly different from the classical operad theory, 
        we regard a ring operad $\mathscr{C}$ as a covariant functor from 
        $\widehat{\mathcal{R}}$ and $X^{\otimes n}$ for some 
        $(X,\epsilon,\iota)$ in $\mathscr{V}$ a contravariant functor
        by shuffling the components; that is, for 
        $(f_m,\phi,f_n)\in\widehat{\mathcal{R}}(f_m,f_n)$, $f_m\in\mathcal{R}(m)$,
        $f_n\in\mathcal{R}(n)$,
        \begin{align*}
            \phi^*: X^{\otimes n}=X_1\otimes X_2 \otimes \cdots\otimes X_n \to X_{\phi(1)}\otimes X_{\phi(2)} \otimes \cdots\otimes X_{\phi(m)}\to X^{\otimes m},
        \end{align*}
        Here $X_0=\kappa_0$, $X_e=\kappa_e$ with 
        \begin{align*}
            \epsilon: X_e &\to X\\
            \iota: X_0 &\to X
        \end{align*}
        the structure maps of $X$.
    \end{rem}

    \begin{defn}\label{Calgebra}
        Let $\mathscr{C}$ be a ring operad. A $\mathscr{C}$-algebra is 
        an object $(X,\epsilon,\iota)$ in $\mathscr{V}_e$ together with morphisms
        in $\mathscr{V}$
        $$\theta:\mathscr{C}(f)\otimes X^{\otimes j}\to X$$
        for $f\in\mathcal{R}(j)$ 
        such that the following diagrams commute in $\mathscr{V}$:

        (1) Associativity diagram:

        For $g\in\mathcal{R}(k)$, $f_s\in\mathcal{R}(j_s)$, $s=1,2,\cdots,k$,
        let $J:=j_1+\cdots+j_k$,
            \begin{center}
                \begin{codi}
                    \obj{
                        |(0)|\mathscr{C}(g)\otimes(\displaystyle\bigotimes_{s=1}^k\mathscr{C}(f_s))\otimes X^{\otimes J}
                        &[12em] |(1)| \mathscr{C}(g(f_1,f_2,\cdots,f_k))\otimes X^{\otimes J}\\[2em]
                        |(0')|\mathscr{C}(g)\otimes\displaystyle\bigotimes_{s=1}^k(\mathscr{C}(f_s)\otimes X^{\otimes j_s})
                        & \\[2em]
                        |(2)|\mathscr{C}(g)\otimes X^{\otimes k}
                        &|(3)| X\\
                        };
                    \mor 0 \gamma\otimes\mathrm{id}:-> 1;
                    \mor 0 "\text{shuffle}":-> 0';
                    \mor 0' "\mathrm{id}\otimes\theta^k":-> 2;
                    \mor 1 \theta:-> 3;
                    \mor 2 \theta:-> 3;
                \end{codi}
            \end{center}

        (2) Unit diagram:
            
        For $g\in\mathcal{R}(k) $,
            \begin{center}
                \begin{codi}
                    \obj{
                        |(0)|*\otimes X&[3em] |(1)|X\\
                        |(2)|\mathscr{C}(a_{1,1})\otimes X&\\
                        };
                    \mor 0 \cong:-> 1;
                    \mor[left] 0 "\eta\otimes\text{id}":-> 2;
                    \mor[below right] 2 "\theta":-> 1;
                \end{codi}
            \end{center}
            
        (3) Equivariance diagram:

        For $g_m\in\mathcal{R}(m)$, $g_n\in\mathcal{R}(n)$, 
        $(g_m,\psi,g_n)\in\mathrm{Mor}(\widehat{\mathcal{R}})$,
            \begin{center}
                \begin{codi}
                    \obj {|(0)|\mathscr{C}(g_m)\otimes X^{\otimes n}
                    &[12em]|(1')| \mathscr{C}(g_n)\otimes X^{\otimes n} \\
                    |(2)| \mathscr{C}(g_m)\otimes X^{\otimes m}&|(3')| X \\
                    };
                    \mor 0 "\mathscr{C}(\psi)\otimes \text{id}":-> 1';
                    \mor 0 "\psi^*":-> 2;
                    \mor 2 "\theta":-> 3';
                    \mor 1' "\theta":-> 3';
                \end{codi}
            \end{center}
    \end{defn}

    The category of all $\mathscr{C}$-algebras
    will be denoted by $\mathscr{C}[\mathscr{V}_e]$.

    Then we can define a monad 
    $\mathbb{C}$ associated to any ring operad $\mathscr{C}$ in 
    $\mathscr{V}_e$.

    \begin{defn}\label{defnofmonad}
        Let $\mathscr{C}$ be a ring operad in $\mathscr{V}$.
        We define the monad $\mathbb{C}:\mathscr{V}_e\to \mathscr{V}_e$
        associated to $\mathscr{C}$ as the following coend. 
        For any $(X,\epsilon,\iota)\in\mathscr{V}_e$,
        \begin{align*}
            \mathbb{C}X:=&\mathscr{C}(\bullet)\otimes_{\widehat{\mathcal{R}}^{op}}X^\bullet.
        \end{align*}

        The unit and composition are induced by 
        \begin{align*}
            \eta:& X=*\otimes X\to \mathscr{C}(a_{1,1})\otimes X \to \mathbb{C}X,\\
            \mu:& \mathscr{C}(f)\otimes \bigotimes_i(\mathscr{C}(g_i)\otimes X^{|g_i|})\to \mathscr{C}(f(g_1,\cdots,g_{|f|}))\otimes X^{|g_1|+\cdots+|g_{|f|}|}.
        \end{align*}
        The structure map $S^0\to \mathbb{C}X$
        is the unique one such that $\eta: X\to \mathbb{C}X$
        is a morphism in $\mathscr{V}_e$.

        We denote the category of $\mathbb{C}$-algebras in $\mathscr{V}$
        by $\mathbb{C}[\mathscr{V}_e]$.
    \end{defn}

    \begin{lem}
        $\mathbb{C}:\mathscr{V}_e\to \mathscr{V}_e$ is a 
        well-defined monad.
    \end{lem}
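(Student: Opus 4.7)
My plan is to verify, in order: (a) that $\mathbb{C}$ defines an endofunctor on $\mathscr{V}_e$; (b) that $\eta$ and $\mu$ are well-defined natural transformations; and (c) that the monad axioms hold. Each step reduces to unwinding the coend description and invoking the corresponding axiom in Definition \ref{defoftingoperad}.

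For (a), I would first note that the coend $\mathbb{C}X = \int^{f \in \widehat{\mathcal{R}}} \mathscr{C}(f) \otimes X^{\otimes |f|}$, with $\mathscr{C}$ regarded as a covariant functor and $X^{\otimes \bullet}$ as a contravariant functor on $\widehat{\mathcal{R}}$ via the shuffling described in the preceding remark, exists because $\mathscr{V}$ is cocomplete. Functoriality in $X$ uses cocontinuity of $\otimes$ in each slot, together with the fact that any morphism $X \to Y$ in $\mathscr{V}_e$ preserves the basepoints $\kappa_0$ and $\kappa_e$, so the induced $X^{\otimes n} \to Y^{\otimes n}$ commute with the action of every $\phi$ in $\widehat{\mathcal{R}}$, including those whose underlying set map hits $\{0,e\}$. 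The basepoint $S^0 \to \mathbb{C}X$ is then the unique one making $\eta$ a morphism in $\mathscr{V}_e$; concretely it factors as $S^0 \to X \cong \ast \otimes X \to \mathscr{C}(a_{1,1})\otimes X \to \mathbb{C}X$.

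For (b), the unit $\eta_X : X \to \mathbb{C}X$ is the map just described, and its naturality is immediate. For $\mu$, I would first identify
$$\mathbb{C}\mathbb{C}X \;\cong\; \int^{f}\int^{g_1,\ldots,g_{|f|}} \mathscr{C}(f) \otimes \mathscr{C}(g_1) \otimes \cdots \otimes \mathscr{C}(g_{|f|}) \otimes X^{\otimes(|g_1|+\cdots+|g_{|f|}|)},$$
using that $\otimes$ commutes with coends in each slot by cocontinuity. On each such summand $\mu$ is defined by applying $\gamma \otimes \mathrm{id}$ to land in $\mathscr{C}(f(g_1,\ldots,g_{|f|})) \otimes X^{\otimes|f(g_1,\ldots,g_{|f|})|}$ and then including into $\mathbb{C}X$.

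The main obstacle, and really the whole content of the lemma, is checking that $\mu$ descends to the coend. For the inner coends in each $g_s$, well-definedness under a morphism $(g_s,\phi_s,g_s')$ is precisely the third equivariance diagram of Definition \ref{defoftingoperad}. For the outer coend, a general morphism $(f_m,\psi,f_n)$ factors as $\psi = p\circ\sigma$ with $\sigma$ singular and $p$ effective by Lemma \ref{lemofinj}; the effective part $p$ is handled by the first equivariance diagram (with the absent factors diagonalized to $\ast$, using that $\ast$ is terminal in $\mathscr{V}$), while the singular part $\sigma$ is handled by the second equivariance diagram together with the conventions $\mathscr{C}(0_n)\cong \ast$ and the unit $\eta\colon \ast\to\mathscr{C}(a_{1,1})$, which together insert the correct $0_0$'s and $a_{1,1}$'s into the inner indices where $\sigma$ hits $0$ and $e$. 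Once $\mu$ is well-defined, the unit axioms $\mu\circ\eta_{\mathbb{C}X} = \mathrm{id}_{\mathbb{C}X} = \mu\circ\mathbb{C}\eta_X$ follow directly from the two unit diagrams, and the associativity $\mu\circ\mu_{\mathbb{C}X} = \mu\circ\mathbb{C}\mu$ follows from the associativity diagram for $\mathscr{C}$; after expanding both composites as maps between triply-iterated coends (again permissible by cocontinuity of $\otimes$), the two sides match summand-by-summand.
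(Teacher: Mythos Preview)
Your proposal is correct and follows essentially the same approach as the paper: the paper's proof is a one-sentence sketch stating that $\mu$ is well-defined by the equivariance diagrams of Definition~\ref{defoftingoperad} and that the monad associativity and unit axioms follow from the corresponding diagrams there. Your version simply unpacks this sketch, making explicit the iterated-coend description of $\mathbb{C}\mathbb{C}X$, the role of the canonical decomposition $\psi=p\circ\sigma$ in reducing the outer coend check to the separate equivariance diagrams, and the use of the third equivariance diagram for the inner coends.
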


    \begin{proof}
        This holds immediately from Definition \ref{defoftingoperad}.
        More precisely, the multiplication $\mu$ is well-defined
        by the equivariance diagrams, the associativity and unit diagrams
        \begin{center}
            \begin{codi}
                \obj {|(0)|\mathbb{C}\mathbb{C}\mathbb{C}X
                &|(1')| \mathbb{C}\mathbb{C}X
                & |(a)| \mathbb{C}X
                & |(b)| \mathbb{C}\mathbb{C}X
                & |(c)| \mathbb{C}X\\
                |(2)| \mathbb{C}\mathbb{C}X&|(3')| \mathbb{C}X
                & & |(d)| \mathbb{C}X & \\
                };
                \mor 0 "\mu":-> 1';
                \mor[left] 0 "\mathbb{C}\mu":-> 2;
                \mor 2 "\mu":-> 3';
                \mor 1' "\mu":-> 3';
                \mor a "\eta":-> b "\mu":-> d;
                \mor[above] c "\mathbb{C}\eta":-> b;
                \mor[below left] a "id":-> d;
                \mor c "id":-> d;
            \end{codi}
        \end{center}
        commute from the associativity and unit diagrams
        in Definition \ref{defoftingoperad}.
    \end{proof}
    
    \begin{prop}\label{propofmonadandoperad}
        Let $\mathscr{C}$ be a ring operad in $\mathscr{V}$
        with associated monad $\mathbb{C}$. Then 
        their categories of algebras are isomorphic; that is,
        $$\mathbb{C}[\mathscr{V}_e]\cong \mathscr{C}[\mathscr{V}_e].$$
    \end{prop}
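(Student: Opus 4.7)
The plan is to construct a pair of functors between $\mathbb{C}[\mathscr{V}_e]$ and $\mathscr{C}[\mathscr{V}_e]$ and verify they are mutually inverse. This is a standard operad/monad correspondence, but one must be careful to match up the axioms precisely, especially because the indexing category $\widehat{\mathcal{R}}$ replaces the symmetric groups and the coend appearing in $\mathbb{C}X$ encodes the equivariance diagrams rather than the associativity ones.

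First I would go from $\mathscr{C}$-algebras to $\mathbb{C}$-algebras. Given a $\mathscr{C}$-algebra $(X,\{\theta_f\})$, for each $f \in \mathcal{R}(j)$ the map $\theta_f\colon \mathscr{C}(f)\otimes X^{\otimes j}\to X$ is a family of morphisms. I claim this family factors uniquely through the coend $\mathbb{C}X = \mathscr{C}(\bullet)\otimes_{\widehat{\mathcal{R}}^{op}} X^{\bullet}$ to give a map $\xi\colon \mathbb{C}X\to X$. The compatibility needed to factor through the coend is exactly the Equivariance Diagram of Definition \ref{Calgebra}: for any $(f_m,\psi,f_n)\in\mathrm{Mor}(\widehat{\mathcal{R}})$ the two composites $\mathscr{C}(f_m)\otimes X^{\otimes n}\rightrightarrows X$ obtained by pushing $\psi$ on either side must agree, and this is precisely what the $\mathscr{C}$-algebra equivariance axiom asserts (together with the structure maps $\epsilon,\iota$ handling the $0$ and $e$ entries of $\mathbf{n}_e$). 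Then I would verify that $\xi$ is a $\mathbb{C}$-algebra: the unit axiom $\xi\circ\eta_X = \mathrm{id}_X$ comes from the unit diagram for $\theta$ applied to $\eta\colon * \to \mathscr{C}(a_{1,1})$, and the associativity axiom $\xi\circ\mathbb{C}\xi = \xi\circ\mu_X$ follows from the associativity diagram of the $\mathscr{C}$-algebra together with the definition of the multiplication $\mu$ on $\mathbb{C}$.

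Conversely, given a $\mathbb{C}$-algebra $(X,\xi)$, I would define $\theta_f\colon \mathscr{C}(f)\otimes X^{\otimes |f|}\to X$ as the composite of the canonical coend inclusion $\mathscr{C}(f)\otimes X^{\otimes |f|} \to \mathbb{C}X$ with $\xi$. Checking the three axioms of Definition \ref{Calgebra} is then routine: the associativity axiom unpacks to the diagram comparing $\gamma\otimes\mathrm{id}$ and $\mathrm{id}\otimes\theta^k$ composed with $\theta$, which becomes the associativity of $\xi$ against the two ways of expressing $\mu$; the unit axiom is immediate from $\xi\circ\eta_X=\mathrm{id}_X$; and the equivariance axiom holds automatically because the coend identifies the two sides. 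Finally, these two assignments are mutually inverse essentially on the nose, by the universal property of the coend in one direction and by restriction along the coend inclusions in the other, and both are functorial in morphisms of the underlying object $X$.

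The main obstacle, such as it is, will be the bookkeeping for the equivariance axioms, which in this setting are more intricate than in the classical operad case because $\widehat{\mathcal{R}}$ contains non-effective morphisms (with fibers over $0$ and $e$) whose images under the functor $\mathscr{C}$ interact with the structure maps $\epsilon,\iota\colon S^0\to X$. Specifically, I will need to confirm that the $\mathscr{V}_e$-enriched coend — which in particular collapses the $\kappa_0$ and $\kappa_e$ copies of $*$ consistently against $\iota$ and $\epsilon$ — correctly captures the two separate equivariance diagrams (one for general $\psi$ with $\phi^{-1}(e)=e$ and one for singular $\psi$ with $\phi^{-1}(0)=0$). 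Once this compatibility is checked, the rest of the proof reduces to a diagram chase of the type already carried out in the preceding lemma establishing that $\mathbb{C}$ is a monad.
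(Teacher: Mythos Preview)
Your proposal is correct and follows essentially the same approach as the paper's proof: the paper simply observes that the classical argument from \cite[Proposition 2.8]{may1972geometry} goes through, noting that a map $\mathbb{C}X\to X$ is exactly a family $\{\theta_f\}$ satisfying the equivariance diagram (this is the coend universal property), and that such a map is a $\mathbb{C}$-algebra structure iff the associativity and unit diagrams hold. Your write-up is more detailed than the paper's sketch, but the logical structure is identical; one small remark is that the $\mathscr{C}$-algebra definition has a single equivariance diagram (Definition~\ref{Calgebra}(3)) rather than two—the pair of diagrams you mention belong to the ring operad axioms (Definition~\ref{defoftingoperad}(3))—though this does not affect your argument.
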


    \begin{proof}
        The proof of the corresponding proposition
        in the classical operad theory also works here, see 
        \cite[Proposition 2.8]{may1972geometry}.

        In short, a morphism $\theta:\mathbb{C}X\to X$
        is precisely a collection of morphisms 
        $\theta_f: \mathscr{C}(f)\otimes X^{|f|}\to X$ 
        such that the equivariance diagram in Definition 
        \ref{Calgebra} commutes. Moreover, such a morphism $\theta$
        defines a $\mathbb{C}$-algebra structure on $X$ if and only if 
        the associativity and unit diagrams in Definition 
        \ref{Calgebra} commute.
    \end{proof}

    \begin{exmp}
        \emph{Strict ring operads}. Let $\mathscr{R}_{st}$ be a ring operad
         with $\mathscr{R}_{st}(g)=*$ for each 
        $g\in\mathrm{Obj}(\widehat{\mathcal{R}})$. The image of 
        morphisms in $\widehat{\mathcal{R}}$ is determined uniquely 
        since $*$ is terminal. This ring operad is called the strict ring operad
        based in $\mathscr{V}$ because algebras over $\mathscr{R}_{st}$ are precisely strict rig objects
        in $\mathscr{V}$. Also, note that each component $\mathscr{R}_{st}(g)$ 
        is terminal, so $\mathscr{R}_{st}$ itself is terminal in the category of 
        ring operads based in $\mathscr{V}$.
    \end{exmp}

    \subsection{The Comparison Theorem}\label{sectioncomparison}
    In this section, we define a notion of $E_\infty$ ring operad whose algebras,
    as desired, are spaces with addition and multiplication satisfying 
    higher homotopical associativity, commutativity and 
    distributivity laws. Then we state the Comparison
    Theorem \ref{thmcompare1} which says the categories of algebras over any two 
    $E_\infty$ ring operad have equivalent homotopy categories.
    The Comparison Theorem is the most fundamental and useful theorem
    in the theory of ring operads.
    However, the proof of this theorem is not easy to 
    understand at first reading due to the extensive 
    combinatorial details it contains, and since it 
    will not be used elsewhere, we will defer it to 
    Appendix \ref{appendix}.

    We first introduce some notations.

    \begin{defn}\label{special}
        (1) A polynomial $f\in\mathcal{R}(n)$ is called non-degenerate if 
        $\frac{\partial}{\partial a_{n,s}}f\neq 0$ 
        for each $s=1,2,\cdots,n$ and it is 
        called degenerate otherwise.
        
        (2) We say a non-degenerate
        $f\in\mathcal{R}(n)$ is special if it is of the form 
        $$f=a_{1,n}\cdots a_{k_1,n}+a_{k_1+1,n}\cdots a_{k_1+k_2,n}+\cdots+a_{k_1+\cdots+k_{l-1}+1,n}\cdots a_{k_1+\cdots+k_{l},n}$$
        for some $0<k_1\leqslant k_2\leqslant \cdots\leqslant k_l$
        with $k_1+k_2+\cdots+k_l=n$.
    \end{defn}

    Now, we fix our ground
    category to be the category of unbased topological spaces
    $(\mathscr{U},\times,*)$.

    \begin{defn}\label{defnEinfty}
        When the ground category $(\mathscr{V},\otimes,*)$ is 
        the category of unbased spaces
        $(\mathscr{U},\times,*)$,
        an $E_\infty$ ring operad $\mathscr{C}$
        is a ring operad such that:

        (1) All $\mathscr{C}(f)$ are contractible.
        
        (2) Let $(f_m,\phi,f_n)\in\widehat{\mathcal{R}}(f_m,f_n)$
        be a morphism such that $\phi:\mathbf{m}\to \mathbf{n}$
        is an injection as a map between sets. Then 
        $\phi_*:\mathscr{C}(f_m)\to \mathscr{C}(f_n)$ is a homeomorphism.

        (3) Given non-degenerate objects
        $f_1,f_2$ and $\alpha_1\in\mathscr{C}(f_1),\alpha_2\in\mathscr{C}(f_2)$,
        if there exists some non-degenerate $g$ with effective 
        morphisms
        $(f_1,\phi_1,g), (f_2,\phi_2,g)$ in $\widehat{\mathcal{R}}_{eff}$
        such that $$\phi_{1*}\alpha_1=\phi_{2*}\alpha_2\in\mathscr{C}(g),$$
        then there must exist a non-degenerate $h$ with effective morphisms
        $$(h,\psi_1,f_1), (h,\psi_2,f_2)$$ in $\widehat{\mathcal{R}}_{n.d.}$
        and a $\beta\in\mathscr{C}(h)$
        such that 
        \begin{align*}
            \psi_{1*}\beta&=\alpha_1,\\
            \psi_{2*}\beta&=\alpha_2.
        \end{align*}

        (4) Given any two effective morphisms 
        $$(f,\phi_1,g),(f,\phi_2,g)$$
        with $f$ non-degenerate,
        if there exists $\alpha\in\mathscr{C}(f)$ such that $\phi_{1*}\alpha=\phi_{2*}\alpha$,
        then $\phi_1=\phi_2$.
        
        (5) For any morphism $(f,\phi,g)$ in $\widehat{\mathcal{R}}_{n.d.}$,
        $\phi_*$ is a cofibration.

    \end{defn}

    In this definition, condition (1) arises from our motivation
    to require elements in $\mathscr{C}(f)$ represent higher homotopically
    equivalent multivariable operators. Condition (2) arises from
    the interpretation that only those spaces indexed by non-degenerate
    objects provide essential information, as we have shown in previous 
    sections. Condition (5) is a technical requirement which is necessary 
    in the proof of the Comparison Theorem. 

    Condition (3) and (4) are generalizations of the $\Sigma_j$ free condition 
    in the classical operad theory. Moreover, these conditions imply
    the following result.
    
    \begin{lem}
        Let $\mathscr{C}$ be an $E_\infty$ ring operad. Then 
        $\Sigma_n$ acts (on the left) freely on 
        $$\coprod_{f\in\mathcal{R}_{n.d.}(n)}\mathscr{C}(f).$$
    \end{lem}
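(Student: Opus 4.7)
The plan is to realize the $\Sigma_n$-action through the morphisms of $\widehat{\mathcal{R}}_{eff}$ and then read the freeness off of condition (4) in Definition \ref{defnEinfty}. Every $\sigma\in\Sigma_n$ determines an effective map $\sigma:\mathbf{n}_e\to\mathbf{n}_e$ (it fixes $0$ and $e$ and permutes $\{1,\ldots,n\}$), so for each $f\in\mathcal{R}_{n.d.}(n)$ we obtain an element $\sigma_*f\in\mathcal{R}(n)$, which is again non-degenerate because $\sigma$ is a bijection on variables. Thus $\sigma$ defines a triple $(f,\sigma,\sigma_*f)\in\mathrm{Mor}(\widehat{\mathcal{R}}_{eff})$ and a map $\sigma_*:\mathscr{C}(f)\to\mathscr{C}(\sigma_*f)$. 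Assembling these over $f$ gives the $\Sigma_n$-action on the stated coproduct; functoriality of $(-)_*$ makes this a genuine group action.

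Next I would unwind what it means for $\alpha\in\mathscr{C}(f)$ to be fixed by $\sigma\in\Sigma_n$. Since $\sigma_*\alpha\in\mathscr{C}(\sigma_*f)$ and $\alpha\in\mathscr{C}(f)$ live in disjoint summands unless the indices agree, the fixed-point equation $\sigma\cdot\alpha=\alpha$ forces both $\sigma_*f=f$ (so that $\sigma$ really permutes the monomials of $f$) and $\sigma_*\alpha=\alpha$ inside $\mathscr{C}(f)$.

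The key step is then a direct invocation of condition (4) of Definition \ref{defnEinfty}. Consider the two effective morphisms
\[
(f,\sigma,f),\qquad (f,\mathrm{id},f)
\]
of $\widehat{\mathcal{R}}_{eff}$, with $f$ non-degenerate. By the previous paragraph we have $\sigma_*\alpha=\alpha=\mathrm{id}_*\alpha$. Condition (4) therefore gives $\sigma=\mathrm{id}$ as maps $\mathbf{n}_e\to\mathbf{n}_e$, hence $\sigma$ is the identity permutation. This proves the action is free.

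I do not anticipate a real obstacle here: the content of the lemma is exactly to package condition (4) into the classical language of a free symmetric group action, and condition (2) (injection implies homeomorphism) plays only the background role of ensuring $\sigma_*$ on $\mathscr{C}(f)$ is a well-behaved map. The only mild subtlety is checking that the fixed-point condition forces $\sigma_*f=f$ before one is allowed to apply (4), which is immediate once one remembers that the $\Sigma_n$-action is defined componentwise over the disjoint union.
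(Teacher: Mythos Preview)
Your proof is correct and follows essentially the same approach as the paper: the paper cites Lemma \ref{lemofnondeg} to see that the $\Sigma_n$-action is well defined on non-degenerate objects and then says the freeness ``follows directly from condition (4),'' which is exactly the argument you spell out by comparing the morphisms $(f,\sigma,f)$ and $(f,\mathrm{id},f)$. Your additional observation that one must first have $\sigma_*f=f$ before invoking condition (4) is a helpful clarification the paper leaves implicit.
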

    \begin{proof}
        By Lemma \ref{lemofnondeg}, for any $f\in\mathcal{R}_{n.d.}(n)$
        and $\phi\in\Sigma_n$, $\phi_*f\in \mathcal{R}_{n.d.}(n)$,
        so the $\Sigma_n$ action is well defined. The freeness follows
        directly from condition (4).
    \end{proof}

    Now we state the Comparison Theorem.

    \begin{thm}[Comparison Theorem]\label{thmcompare1}
        Let $\mathscr{C},\mathscr{C'}$ be any two 
        $E_\infty$ ring operads. Then the homotopy categories
        of
        $\mathscr{C}'[\mathscr{U}_e]$ and $\mathscr{C}[\mathscr{U}_e]$
        are equivalent.

        Moreover, any $\mathscr{C}$-algebra $X$ is equivalent
        to some $\mathscr{C}'$-algebra $Y$.
    \end{thm}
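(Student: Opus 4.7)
The plan is to adapt the monadic two-sided bar construction strategy of \cite[Chapter 13]{may1972geometry}. First I would form the product ring operad $\mathscr{D} := \mathscr{C}\times\mathscr{C}'$, defined componentwise by $\mathscr{D}(f)=\mathscr{C}(f)\times\mathscr{C}'(f)$ with diagonal structure maps. Since each of the five conditions in Definition \ref{defnEinfty} (contractibility, injective-homeomorphism, the two pullback/equivariance conditions, and the cofibration condition) is stable under finite products in $\mathscr{U}$, the operad $\mathscr{D}$ is again $E_\infty$. The two projections $\pi\colon\mathscr{D}\to\mathscr{C}$ and $\pi'\colon\mathscr{D}\to\mathscr{C}'$ induce restriction functors $\pi^{*}$ and $\pi'^{*}$ between categories of algebras, so by the evident symmetry it suffices to prove: for any map $\pi\colon\mathscr{D}\to\mathscr{C}$ of $E_\infty$ ring operads, the restriction $\pi^{*}$ induces an equivalence of homotopy categories.

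Second, for a $\mathscr{C}$-algebra $X$ with associated monads $\mathbb{C},\mathbb{D}$, I would form the simplicial object $B_{\bullet}(\mathbb{C},\mathbb{D},\pi^{*}X)$ with $B_k=\mathbb{C}\mathbb{D}^{k}\pi^{*}X$, face maps coming from the monad multiplications and from $\pi$, and degeneracies from the units. Its geometric realization $B(\mathbb{C},\mathbb{D},\pi^{*}X)$ inherits a natural $\mathscr{C}$-algebra structure, and dually $B(\mathbb{D},\mathbb{D},\pi^{*}X)$ is a $\mathscr{D}$-algebra. There are canonical augmentations
\[
\varepsilon\colon B(\mathbb{C},\mathbb{D},\pi^{*}X)\to X,\qquad \varepsilon'\colon B(\mathbb{D},\mathbb{D},\pi^{*}X)\to \pi^{*}X,
\]
and the plan is to show both are weak equivalences. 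The second is formal: the identity map $\pi^{*}X\to\pi^{*}X$ provides an extra degeneracy, so $\varepsilon'$ has a simplicial contraction and is a homotopy equivalence. The comparison of homotopy categories is then furnished by $X\mapsto B(\mathbb{C}',\mathbb{D},\pi^{*}X)$ on algebras together with the natural zig-zag coming from $\varepsilon$ and $\varepsilon'$.

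Third, to prove $\varepsilon$ is a weak equivalence, I would use the filtration of the monad $\mathbb{C}$ provided in Section \ref{Sectiondiffoperad}, which in turn is built from the filtration of $\widehat{\mathcal{R}}_{n.d.}$ in Proposition \ref{propofnondegen}. This filtration induces a filtration on the simplicial object (and hence on its realization) whose associated graded pieces, after identifying degenerate polynomial components with basepoints via condition~(2) of Definition \ref{defnEinfty}, take the form of products of contractible $\mathscr{C}(f)$'s with copies of $X$ modded out by the $\Sigma_n$-actions. Conditions (3) and (4) give the required freeness of these actions (together with the $\Sigma_n$-freeness lemma proved just before the theorem statement), and condition (5) ensures the successive inclusions are cofibrations. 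A standard induction then shows that $\varepsilon$ is a weak equivalence at each filtration stage, and passing to the colimit finishes the argument.

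The main obstacle is the third step: unlike the classical operad theory, where the indexing category $\Sigma$ is simple and $\Sigma_n$-freeness directly delivers the cellular structure on the bar construction, here the indexing category $\widehat{\mathcal{R}}$ has a much richer combinatorics (non-effective morphisms, singular decompositions, polynomial relations), so one must first identify the correct filtration of $\widehat{\mathcal{R}}_{n.d.}$ that interacts well with both composition and with the coend defining $\mathbb{C}X$, and then verify by a delicate case analysis on the canonical decomposition $\phi=p\circ\sigma$ that each filtration quotient is genuinely of the free-orbit form required. This combinatorial bookkeeping is precisely what makes the proof sufficiently technical to warrant deferral to Appendix \ref{appendix}.
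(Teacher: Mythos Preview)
Your overall strategy matches the paper's: form the product $\mathscr{D}=\mathscr{C}\times\mathscr{C}'$, reduce to showing that restriction along a morphism $\pi$ of $E_\infty$ ring operads induces an equivalence of homotopy categories, and construct the inverse via a two-sided monadic bar construction. This is exactly Proposition~\ref{propcompare} and Theorem~\ref{thmcompare}.

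Your third step, however, is organized incorrectly. You propose to filter the simplicial object $B_\bullet(\mathbb{C},\mathbb{D},\pi^*X)$ and show that $\varepsilon$ is an equivalence ``at each filtration stage''; but the target $X$ carries no matching filtration, and there is no evident filtration on the iterated monads $\mathbb{C}\mathbb{D}^kX$ that is compatible with all face and degeneracy maps. The paper instead isolates the filtration argument at the level of a \emph{single} application of the monad: it uses the filtration of $\widehat{\mathcal{R}}_{n.d.}$ from Proposition~\ref{propofnondegen} to prove (Lemma~\ref{lemonCX} and Proposition~\ref{propofCXreduced}) that $\pi\colon\mathbb{D}Y\to\mathbb{C}Y$ is a homotopy equivalence for \emph{every} space $Y$. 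Once that is in hand, the bar comparison is purely formal: $B(\pi,\mathrm{id},\mathrm{id})\colon B_\bullet(\mathbb{D},\mathbb{D},\pi^*X)\to B_\bullet(\mathbb{C},\mathbb{D},\pi^*X)$ is a levelwise equivalence, so its realization is an equivalence by \cite[Theorem~A.4]{may1972geometry}, and your $\varepsilon$ then factors through the simplicial contraction $\varepsilon'$ you already identified. In short, the filtration lives on $\mathbb{C}Y$ and proves a statement about the monad map, not on the bar construction itself.
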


    \section{Comparison with classical theories}\label{sectionclassical}

    \subsection{Ring operads and operad pairs}\label{Sectionoperadpair}

    In this section, we construct a ring operad $\mathscr{R}_{\mathscr{C},\mathscr{G}}$ from any classical
    operad pair $(\mathscr{C},\mathscr{G})$ such that their 
    categories of algebras coincide. See 
    \cite{TheconstructionofE_infringspacesfrombipermutativecategories}
    for the definition of an operad pair.
    Then we show that the ring operad 
    $\mathscr{R}_{\mathscr{C},\mathscr{G}}$ is $E_\infty$ when 
    $(\mathscr{C},\mathscr{G})$ is an $E_\infty$ operad pair.
    Therefore, the Comparison Theorem \ref{thmcompare1} 
    generalizes the classical multiplicative infinite loop machine
    to be applied on algebras
    over any $E_\infty$ ring operad. 

    Before the definition, we need some notations first.

    \begin{notns}\label{notnofgamma}
        (1) For any $$I=(i_1,\cdots,i_n)\in\{0,1\}^n,$$ let $\Gamma_I$
        be the totally ordered set $\{j\in\{1,2,\cdots,n\}:i_j=1\}$
        with order induced by that of integers. Then 
        \begin{align*}
            \{0,1\}^n &\longleftrightarrow P(\{1,2,\cdots,n\})\\
            I &\longleftrightarrow \Gamma_I
        \end{align*}
        is a bijection.

        (2) For any $$f=\sum_{I=(i_1,i_2,\cdots,i_n)\in \{0,1\}^n} 
        \varepsilon_I a_{1,n}^{i_1}\cdots a_{n,n}^{i_n}\in \mathcal{R}(n),$$
        let $\Lambda_f$ be the totally ordered set 
        $\{I=(i_1,\cdots,i_n)\in\{0,1\}^n:\varepsilon_I=1\}$
        with lexicographical order. Then
        \begin{align*}
            \mathcal{R}(n) &\longleftrightarrow P(\{0,1\}^n)\\
            f &\longleftrightarrow \Lambda_f
        \end{align*}
        is a bijection.

    \end{notns}
    \begin{lem}\label{type}
        Let $(f_m,\phi,f_n)\in\widehat{\mathcal{R}}(f_m,f_n)$ be a morphism, say
            $$f_m=\sum_{I=(i_1,i_2,\cdots,i_m)\in \{0,1\}^m} 
        \varepsilon_I a_{1,m}^{i_1}\cdots a_{m,m}^{i_m}\in \mathcal{R}(m).$$
        If $\phi$ is effective, then there is a bijection
        $\tilde{\phi}: \Lambda_{f_m}\to \Lambda_{f_n}$ such that
        for each $I\in \Lambda_{f_m}$,
        \begin{align*}
            \phi|_{\Gamma_I}: \Gamma_I &\to \Gamma_{\tilde{\phi}(I)}\\
            j&\mapsto \phi(j)
        \end{align*}
        is a bijection.

        In general, if $\phi$ is not necessarily effective, 
        then 
        $\phi$ induces an injection 
        $\phi': \Lambda_{f_n}\to \Lambda_{f_m}$
        sending each monomial summand $m\in\Lambda_{f_n}$
        to the unique monomial summand $m'\in\Lambda_{f_m}$
        such that $\phi_*m'=m$.

        Moreover, for each $J\in \Lambda_{f_n}$, 
        $\phi$ induces an injection 
        $\phi_{J}: \Gamma_J\to \Gamma_{\phi'J}$
        defined by the restriction of $\phi^{-1}$.
    \end{lem}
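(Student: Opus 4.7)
The plan is to reduce to the effective case via the canonical decomposition $\phi = p \circ \sigma$, and in the effective case to extract the bijection $\tilde{\phi}$ by tracking monomial summands. The engine of the argument is the observation that all coefficients appearing in the standard expansion of an element of $\mathcal{R}(n)$ lie in $\{0,1\}$; consequently, any squared variable or duplicated monomial that arises when we apply $\phi_*$ term by term must survive in $\phi_* f_m = f_n$, and since $f_n \in \mathcal{R}(n)$ admits no such pathology, none can occur in the first place.

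For the effective case, fix $I \in \Lambda_{f_m}$, so $m_I = \prod_{j \in \Gamma_I} a_{j,m}$ and $\phi_* m_I = \prod_{j \in \Gamma_I} a_{\phi(j),n}$ with $\phi(j) \in \{1,\ldots,n\}$ by effectiveness. If $\phi|_{\Gamma_I}$ failed to be injective, a squared variable would appear in $\phi_* m_I$ and, by the coefficient-positivity observation, in $f_n$ as well, contradicting $f_n \in \mathcal{R}(n)$. Hence $\phi|_{\Gamma_I}$ is injective and $\phi_* m_I$ is a monic squarefree monomial, so there is a well-defined $\tilde{\phi}(I) \in \Lambda_{f_n}$ with $\Gamma_{\tilde{\phi}(I)} = \phi(\Gamma_I)$. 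Injectivity of $\tilde{\phi}$ uses positivity once more (distinct $I$'s hitting the same target would create a coefficient $\geq 2$), while surjectivity is forced by $\phi_* f_m = f_n$.

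For the general case, form the canonical decomposition $\phi = p \circ \sigma$ and invoke Lemma \ref{lemofinj}(2) to place $\sigma_* f_m$ in some $\mathcal{R}(r)$. Under $\sigma$, a summand $m_I$ is either annihilated (when some $j \in \Gamma_I$ has $\sigma(j) = 0$) or sent to a monic squarefree monomial, and an analogous positivity argument makes the surviving $I$'s biject with $\Lambda_{\sigma_* f_m}$. Composing the inverse of that bijection with the bijection $\tilde{p}^{-1}$ produced by the effective case yields the required injection $\phi' : \Lambda_{f_n} \to \Lambda_{f_m}$, and the claimed uniqueness of the preimage is simply a restatement of these successive injectivity properties.

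For the final clause, fix $J \in \Lambda_{f_n}$ and compare $\phi_* m_{\phi'(J)} = m_J$ monomial by monomial: expanding with $a_{0,n} = 0$ and $a_{e,n} = 1$ forces that no $j \in \Gamma_{\phi'(J)}$ satisfies $\phi(j) = 0$ and that $\phi$ restricts to a bijection from $\{j \in \Gamma_{\phi'(J)} : \phi(j) \neq e\}$ onto $\Gamma_J$. Taking the inverse of this last bijection and composing with the inclusion into $\Gamma_{\phi'(J)}$ produces $\phi_J$. The only genuine obstacle throughout is to keep the two bookkeeping levels---monomial summands ($\Lambda$) and variable indices ($\Gamma$)---cleanly separated; once positivity rules out the cancellation of any squared or duplicated monomials, every subsequent step reduces to a routine verification.
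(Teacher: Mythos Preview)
Your proof is correct, and the core observation --- that the $0$--$1$ coefficients in $\mathcal{R}(n)$ forbid squared variables or duplicated monomials in the term-by-term expansion of $\phi_* f_m$ --- is exactly the engine of the paper's proof. The paper handles both the effective and general cases in one stroke by writing $f_n$ two ways (as $\sum_I \varepsilon_I a_{\phi(1),n}^{i_1}\cdots a_{\phi(m),n}^{i_m}$ and in the monomial basis) and comparing coefficients, without passing through the canonical decomposition; your reduction via $\phi = p\circ\sigma$ and Lemma~\ref{lemofinj}(2) is a valid but slightly longer route to the same conclusion.
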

    \begin{proof}
        By definition, 
        \begin{align*}
            f_n=&\sum_{I=(i_1,i_2,\cdots,i_m)\in \{0,1\}^m} 
        \varepsilon_I a_{\phi(1),n}^{i_1}\cdots a_{\phi(m),n}^{i_n}\\
        =&\sum_{J=(j_1,j_2,\cdots,j_n)\in \{0,1\}^n} 
        \varepsilon_J a_{1,n}^{j_1}\cdots a_{n,n}^{j_n}\in \mathcal{R}(n).
        \end{align*}

        The second line of the above expression is the decomposition
        of $f_n$ under the monomial basis, in which all coefficients are zero or one.
        The first line is also a sum of monomials, so the fact that these two expressions
        of $f_n$ coincide implies the above lemma.
    \end{proof}

    Here is a useful property of $\widehat{\mathcal{R}}_{eff}$.

    \begin{lem}\label{lemofspecialpullback}
        Let $f$ be a special object.
        Then for any two effective morphisms $(f,\phi_1,h)$, $(f,\phi_2,h)$,
        there exists an automorphism $(f,\sigma,f)$ such that 
        $\phi_1\sigma=\phi_2$.
    \end{lem}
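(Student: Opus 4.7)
The plan is to leverage the rigid structure of a special polynomial: the monomial summands of a special $f\in\mathcal{R}(n)$ involve pairwise disjoint blocks of variables that together partition $\{1,\dots,n\}$. So any permutation of $\mathbf{n}_e$ that sends each block bijectively onto a block of the same size automatically preserves $f$. Combined with Lemma \ref{type}, this will let us read off both $\phi_1$ and $\phi_2$ from data at the monomial level together with data inside each monomial, and then assemble $\sigma$ blockwise.

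First, by Lemma \ref{type} each effective morphism $(f,\phi_i,h)$ furnishes a bijection $\tilde{\phi_i}:\Lambda_f\to\Lambda_h$ and, for every $I\in\Lambda_f$, a bijection $\phi_i|_{\Gamma_I}:\Gamma_I\to\Gamma_{\tilde{\phi_i}(I)}$. Define $\tilde{\sigma}:=\tilde{\phi_1}^{-1}\circ\tilde{\phi_2}:\Lambda_f\to\Lambda_f$. Since each $\tilde{\phi_i}$ preserves the cardinality $|\Gamma_I|$, so does $\tilde{\sigma}$, which makes
\[
\sigma|_{\Gamma_I}:=\bigl(\phi_1|_{\Gamma_{\tilde{\sigma}(I)}}\bigr)^{-1}\circ\phi_2|_{\Gamma_I}:\Gamma_I\longrightarrow\Gamma_{\tilde{\sigma}(I)}
\]
a well-defined bijection. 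Set $\sigma(0)=0$, $\sigma(e)=e$, and take $\sigma$ to agree with the above formula on each $\Gamma_I$. Since $f$ is special, the family $\{\Gamma_I\}_{I\in\Lambda_f}$ partitions $\{1,\dots,n\}$, so $\sigma$ is a globally defined bijection $\mathbf{n}_e\to\mathbf{n}_e$.

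It remains to verify that $\sigma_*f=f$, so that $(f,\sigma,f)$ is a morphism in $\widehat{\mathcal{R}}$, and that $\phi_1\sigma=\phi_2$. For the first, $\sigma$ carries the monomial summand $\prod_{j\in\Gamma_I}a_{j,n}$ to $\prod_{j\in\Gamma_{\tilde{\sigma}(I)}}a_{j,n}$, and reindexing via the bijection $\tilde{\sigma}$ of $\Lambda_f$ recovers $f$ exactly. The second equality is forced on each $\Gamma_I$ by the very definition of $\sigma|_{\Gamma_I}$, and holds on $\{0,e\}$ by effectiveness. The whole argument is essentially bookkeeping; the only genuine obstacle is the disjointness needed to glue blockwise data into a single permutation, which is precisely what specialness provides and why the hypothesis cannot be dropped.
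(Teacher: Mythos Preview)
Your proof is correct and follows essentially the same approach as the paper: both exploit that for a special $f$ the sets $\{\Gamma_I\}_{I\in\Lambda_f}$ partition $\{1,\dots,n\}$, so automorphisms of $f$ can be built blockwise from a size-preserving permutation of the blocks together with bijections within each block. The paper proceeds in two steps---first composing with block-swap automorphisms to reduce to the case $\tilde{\phi_1}=\tilde{\phi_2}$, then adjusting by within-block permutations---whereas you construct $\sigma$ directly in one pass from the data supplied by Lemma~\ref{type}; your version is slightly more explicit but the content is the same.
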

    \begin{proof}
        Assume
        $$f=a_{1,n}\cdots a_{k_1,n}+a_{k_1+1,n}\cdots a_{k_1+k_2,n}+\cdots+a_{k_1+\cdots+k_{l-1}+1,n}\cdots a_{k_1+\cdots+k_{l},n}.$$
        
        Note that if $k_i=k_j$, then the following permutation
        $$(k_1+\cdots+k_{i-1}+1,k_1+\cdots+k_{j-1}+1)\circ\cdots\circ(k_1+\cdots+k_{i-1}+k_i,k_1+\cdots+k_{j-1}+k_j)$$
        induces an automorphism of $f$.

        Therefore, without lost of generality, we assume the induced maps $\phi_{1*}=\phi_{2*}:\Lambda_f\to\Lambda_h$
        coincide. 

        Also, note that any permutation of $\{k_1+\cdots+k_{i-1}+1,\cdots,k_1+\cdots+k_{i-1}+k_i\}$
        induces an automorphism of $f$ for all $i$, so there exists an 
        automorphism $(f,\sigma,f)$ induced by a composition of permutation
        such that $\phi_1\sigma=\phi_2$.
    \end{proof}

    Now, we give the abstract Definition \ref{defnoppairtoring} and then 
    show some Examples \ref{exmpsoppairringoper} to explain it.

    \begin{defn}\label{defnoppairtoring}
        Let $(\mathscr{C},\mathscr{G})$ be an operad pair. 
        Let $\tilde{\gamma}$ be the structure maps of both operads
        and $\lambda$ be the action of $\mathscr{G}$ on $\mathscr{C}$.
        The associated
        ring operad $\mathscr{R}_{\mathscr{C},\mathscr{G}}$ is defined as follows.
        
        (1) For any $f=\sum_{I=(i_1,i_2,\cdots,i_n)\in \{0,1\}^n} 
        \varepsilon_I a_{1,n}^{i_1}\cdots a_{n,n}^{i_n}\in \mathcal{R}(n)$,
        we define 
        $$\mathscr{R}_{\mathscr{C},\mathscr{G}}(f):=
        \mathscr{C}(|\Lambda_f|)\otimes\bigotimes_{I\in\Lambda_f}\mathscr{G}(|\Gamma_I|).$$

        (2) For any $(f_m,\phi,f_n)\in\widehat{\mathcal{R}}(f_m,f_n)$, say
            $$f_m=\sum_{I=(i_1,i_2,\cdots,i_m)\in \{0,1\}^m\setminus\{0\}^m} 
        \varepsilon_I a_{1,m}^{i_1}\cdots a_{m,m}^{i_m}\in \mathcal{R}(m).$$

        By Lemma \ref{type}, 
        $\phi$ induces an injection 
        $\phi': \Lambda_{f_n}\to \Lambda_{f_m}$
        sending each monomial summand $m\in\Lambda_{f_n}$
        to the unique monomial summand $m'\in\Lambda_{f_m}$
        such that $\phi_*m'=m$.

        Moreover, for each $J\in \Lambda_{f_n}$, 
        $\phi$ induces an injection 
        $\phi_{J}: \Gamma_J\to \Gamma_{\phi'J}$
        defined by the restriction of $\phi^{-1}$.

        Therefore, we define
        \begin{center}
            \begin{codi}
                \obj{
                    |(0)|\phi_*:\ \displaystyle\mathscr{C}(|\Lambda_{f_m}|)\otimes\bigotimes_{I\in\Lambda_{f_m}}\mathscr{G}(|\Gamma_I|)
                    &[15em] |(1)|\displaystyle\mathscr{C}(|\Lambda_{f_n}|)\otimes\bigotimes_{J\in\Lambda_{f_n}}\mathscr{G}(|\Gamma_J|)\\
                    };
                \mor 0 "{\phi'}^*\otimes \displaystyle\bigotimes_{J\in \Lambda_{f_n}}\phi_{J}^*":-> 1;
            \end{codi}
        \end{center}

        (3) The unit $\eta: * \to \mathscr{R}_{\mathscr{C},\mathscr{G}}(a_{1,1})=\mathscr{C}(1)\times\mathscr{G}(1)$
        is defined to be the product of two unit maps of $\mathscr{C}$ and $\mathscr{G}$.

        (4) 
        The composition map is defined as
        {\small\begin{center}
            \begin{codi}
                \obj{
                    |(0)|\displaystyle\mathscr{C}(|\Lambda_{f}|)\otimes\bigotimes_{I\in\Lambda_{f}}\mathscr{G}(|\Gamma_I|)
                    \otimes\bigotimes_{i=1}^{k}(\mathscr{C}(|\Lambda_{g_i}|)\otimes\bigotimes_{J\in\Lambda_{g_i}}\mathscr{G}(|\Gamma_J|))\\[0.5em]
                    |(2)|\displaystyle\mathscr{C}(|\Lambda_{f}|)\otimes\bigotimes_{I\in\Lambda_{f}}\bigg(\bigg(\mathscr{G}(|\Gamma_I|)
                    \otimes\bigotimes_{i\in \Gamma_I}\mathscr{C}(|\Lambda_{g_i}|)\bigg)\otimes
                    \mathscr{G}(|\Gamma_I|)\otimes\bigotimes_{i\in \Gamma_I}\bigotimes_{J\in\Lambda_{g_i}}\mathscr{G}(|\Gamma_J|)\bigg)\\[1em]
                    |(3)|\displaystyle\mathscr{C}(|\Lambda_{f}|)\otimes\bigotimes_{I\in\Lambda_{f}}\mathscr{C}(\prod_{i\in\Gamma_I}|\Lambda_{g_i}|)\otimes
                    \bigotimes_{I\in\Lambda_{f}}\bigotimes_{(J_1,\cdots,J_{|\Gamma_I|})\in\prod_{i\in\Gamma_I}
                    \Lambda_{g_i}}\bigg(\mathscr{G}(|\Gamma_I|)\otimes\bigotimes_{k=1}^{|\Gamma_I|}\mathscr{G}(|\Gamma_{J_k}|)\bigg)\\[2em]
                    |(5)|\displaystyle\mathscr{C}(\sum_{I\in\Lambda_f}\prod_{i\in\Gamma_I}|\Lambda_{g_i}|)\otimes\bigotimes_{I\in\Lambda_{f}}
                    \bigotimes_{(J_1,\cdots,J_{|\Gamma_I|})\in\prod_{i\in\Gamma_I}
                    \Lambda_{g_i}}\mathscr{G}(\sum_{k=1}^{|\Gamma_I|}|\Gamma_{J_k}|)\\[1em]
                    |(6)|\displaystyle\mathscr{C}(|\Lambda_{f(g_1,\cdots,g_k)}|)\otimes\bigotimes_{I\in\Lambda_{f(g_1,\cdots,g_k)}}\mathscr{G}(|\Gamma_I|)\\
                    };
                    \mor 0 "\Delta":-> 2 "\lambda":-> 3 "\tilde{\gamma}":-> 5 "\text{shuffle}":-> 6;
            \end{codi}
        \end{center}}
    \end{defn}

    \begin{prop}
        $\mathscr{R}_{\mathscr{C},\mathscr{G}}$ is a well-defined ring operad.
    \end{prop}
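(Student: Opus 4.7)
The plan is to verify, one by one, every condition in Definition \ref{defoftingoperad} by reducing it to the corresponding axiom of the operad pair $(\mathscr{C},\mathscr{G})$, namely the associativity, unital, equivariance, and distributivity (i.e., $\lambda$-compatibility) axioms; see \cite{may1977ring}. The combinatorial bookkeeping is entirely handled by the bijections of Notations \ref{notnofgamma} and the injections $\phi',\phi_J$ of Lemma \ref{type}.

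First, I would establish that $\mathscr{R}_{\mathscr{C},\mathscr{G}}$ is a functor $\widehat{\mathcal{R}}\to\mathscr{V}$. Given composable morphisms $(f_m,\phi,f_n)$ and $(f_n,\psi,f_p)$, I need $(\psi\phi)_*=\psi_*\circ\phi_*$. By the uniqueness clause of Lemma \ref{type}, $(\psi\phi)'=\phi'\circ\psi'$ as injections $\Lambda_{f_p}\hookrightarrow\Lambda_{f_m}$, and for each $J\in\Lambda_{f_p}$ the induced injection on index sets factors as $(\psi\phi)_J=\phi_{\psi'J}\circ\psi_J$ because both are restrictions of $(\psi\phi)^{-1}$. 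Functoriality of $\mathscr{C}$ on its outer tensor factor and of each $\mathscr{G}$ on the inner tensor factors then gives the desired equality. The identification $\mathscr{R}_{\mathscr{C},\mathscr{G}}(0_n)\cong *$ is immediate since $\Lambda_{0_n}=\emptyset$, so the definition collapses to $\mathscr{C}(0)\cong *$.

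Next, I would address the unit diagrams. Since $\eta$ is the product of the unit maps of $\mathscr{C}$ and $\mathscr{G}$, both triangles in Definition \ref{defoftingoperad}(2) follow by tracing through the composition in Definition \ref{defnoppairtoring}(4): the step $\Delta$ followed by $\lambda$ reduces to the unit axiom of $\lambda$ (acting by a unit of $\mathscr{G}$ on any element of $\mathscr{C}$), while $\tilde{\gamma}$ reduces to the operad unit laws of $\mathscr{C}$ and $\mathscr{G}$. For the equivariance diagrams, I would split into cases by the canonical decomposition $\phi=p\circ\sigma$ into effective and singular pieces. For effective morphisms, the induced injection $\phi'$ is a bijection (Lemma \ref{type}), so $\phi_*$ on $\mathscr{R}_{\mathscr{C},\mathscr{G}}$ is essentially a permutation of tensor factors indexed by $\Lambda_f$ combined with pre-chosen permutations on $\Gamma_I$'s, and equivariance collapses to the $\Sigma$-equivariance of $\mathscr{C}$ and $\mathscr{G}$. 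For singular $\phi$, the nontrivial point is that the diagonal $\Delta$ and the map $\lambda$ together must implement the correct fiberwise multiplication encoded by the map $\tilde\psi$; this is exactly the compatibility of $\lambda$ with composition in $\mathscr{C}$, together with the fact that $\lambda$ restricted to a singular factor uses the correct copies of $\mathscr{G}$-elements on the enlarged index set.

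The main obstacle, and the longest diagram-chase, is the associativity axiom, because the composition in Definition \ref{defnoppairtoring}(4) is already a four-step composite built out of $\Delta$, $\lambda$, $\tilde\gamma$, and a shuffle, so doubling it up produces a diagram whose commutativity encodes simultaneously: (i) associativity of $\gamma$ in $\mathscr{C}$ on the outer $\mathscr{C}$-factor, (ii) associativity of $\gamma$ in $\mathscr{G}$ on each inner $\mathscr{G}$-factor, and (iii) the compatibility of $\lambda$ with the operadic compositions of both $\mathscr{C}$ and $\mathscr{G}$. I would organize the proof by subdividing the large diagram into three commuting sub-regions corresponding to exactly these three axioms; the key bookkeeping is that the index set of the composite, $\Lambda_{f(g_1,\dots,g_k)}$, is canonically the disjoint union $\bigsqcup_{I\in\Lambda_f}\prod_{i\in\Gamma_I}\Lambda_{g_i}$, and likewise each $\Gamma$ of a composite index is the disjoint union of the corresponding $\Gamma_{J_k}$'s. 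Once this indexing identification is set up, the sub-diagrams align termwise with the operad pair axioms, and the associativity diagram of Definition \ref{defoftingoperad}(1) commutes by a direct, if tedious, verification.
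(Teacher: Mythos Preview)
Your proposal is correct and follows exactly the same strategy as the paper's proof, which is merely a one-paragraph sketch asserting that the ring operad axioms are a reformulation of the operad pair axioms (unit and equivariance from those of $\mathscr{C}$ and $\mathscr{G}$, associativity from the compatibility of $\lambda$ with the internal structures $\tilde\gamma$). You have simply spelled out the bookkeeping---functoriality via $(\psi\phi)'=\phi'\circ\psi'$ and $(\psi\phi)_J=\phi_{\psi'J}\circ\psi_J$, the effective/singular split for equivariance, and the three-region decomposition of the associativity diagram---that the paper leaves implicit.
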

    \begin{proof}
        This is just a reformulation of the definition of an operad pair.
        $$\mathscr{R}_{\mathscr{C},\mathscr{G}}(0_n)=\mathscr{C}(0)=*$$
        by definition.
        The unit and equivariance diagrams commute from that of both 
        $\mathscr{C}$ and $\mathscr{G}$, and the associativity diagram
        commutes from the relation between $\lambda$ with the internal 
        structure $\tilde{\gamma}$ of both $\mathscr{C}$ and $\mathscr{G}$.
    \end{proof}

    \begin{exmps}\label{exmpsoppairringoper}
        (1) Consider a morphism 
        $$(f=a_{1,5}a_{2,5}a_{3,5}+a_{1,5}a_{4,5}+a_{5,5},\phi,g=a_{1,2}a_{2,2}+a_{1,2})$$
        where 
        \begin{align*}
            \phi:\{0,e,1,\cdots,5\}&\to \{0,e,1,2\},\\
            1&\mapsto e,\\
            2,4 &\mapsto 1,\\
            3&\mapsto 2,\\
            5&\mapsto 0.
        \end{align*}

        Then we have 
        \begin{align*}
            \mathscr{R}_{\mathscr{C},\mathscr{G}}(f)&=\mathscr{C}(3)\otimes \mathscr{G}(3)\otimes \mathscr{G}(2)\otimes \mathscr{G}(1),\\
            \mathscr{R}_{\mathscr{C},\mathscr{G}}(g)&=\mathscr{C}(2)\otimes\mathscr{G}(2)\otimes \mathscr{G}(1),
        \end{align*}
        together with 
        \begin{align*}
            \phi': \Lambda_{g}=\{\{1\}<\{1,2\}\}&\to \Lambda_{f}=\{\{5\}<\{1,4\}<\{1,2,3\}\}\\
            \{1\}&\mapsto\{1,4\}\\
            \{1,2\}&\mapsto\{1,2,3\}
        \end{align*}
        and 
        \begin{align*}
            \phi_{\{1\}}: \{1\}&\to\{1,4\}\\
            1&\mapsto4\\
            \phi_{\{1,2\}}: \{1,2\}&\to\{1,2,3\}\\
            1&\mapsto2\\
            2&\mapsto3.
        \end{align*}

        Therefore, $\phi_*:\mathscr{R}_{\mathscr{C},\mathscr{G}}(f)\to 
        \mathscr{R}_{\mathscr{C},\mathscr{G}}(g)$
        is the tensor of 
        \begin{align*}
            {\phi'}^*:\mathscr{C}(3)&\to \mathscr{C}(2),\\
            \phi_{\{1\}}^*: \mathscr{G}(2)&\to\mathscr{G}(1),\\
            \phi_{\{1,2\}}^*: \mathscr{G}(3)&\to\mathscr{G}(2),\\
            \mathscr{G}(1)&\to *.
        \end{align*}

    (2) Consider polynomials 
    \begin{align*}
        f&=a_{1,2}+a_{1,2}a_{2,2},\\
        g_1&=a_{1,2}+a_{1,2}a_{2,2},\\
        g_2&=a_{1,2}a_{2,2}.
    \end{align*}

    Then we have 
    $f(g_1,g_2)=a_{1,4}+a_{1,4}a_{3,4}a_{4,4}+a_{1,4}a_{2,4}+a_{1,4}a_{2,4}a_{3,4}a_{4,4}$
    and  
    \begin{align*}
        \mathscr{R}_{\mathscr{C},\mathscr{G}}(f)&=\mathscr{C}(2)\otimes \mathscr{G}(1)\otimes \mathscr{G}(2),\\
        \mathscr{R}_{\mathscr{C},\mathscr{G}}(g_1)&=\mathscr{C}(2)'\otimes\mathscr{G}(1)'\otimes \mathscr{G}(2)',\\
        \mathscr{R}_{\mathscr{C},\mathscr{G}}(g_2)&=\mathscr{C}(1)''\otimes\mathscr{G}(2)'',\\
        \mathscr{R}_{\mathscr{C},\mathscr{G}}(f(g_1,g_2))&=\mathscr{C}(4)\otimes\mathscr{G}(1)\otimes \mathscr{G}(3)\otimes\mathscr{G}(2)\otimes \mathscr{G}(4).
    \end{align*}

    In this case, 
    $ \gamma:\mathscr{R}_{\mathscr{C},\mathscr{G}}(f)\otimes
    \mathscr{R}_{\mathscr{C},\mathscr{G}}(g_1)\otimes
    \mathscr{R}_{\mathscr{C},\mathscr{G}}(g_2)\to 
    \mathscr{R}_{\mathscr{C},\mathscr{G}}(f(g_1,g_2))$ is gven by 
    the tensor product of 
    \begin{align*}
        \tilde{\gamma}:\mathscr{G}(1)\otimes\mathscr{G}(1)'&\to \mathscr{G}(1),\\
        \tilde{\gamma}:\mathscr{G}(2)\otimes\mathscr{G}(1)'\otimes\mathscr{G}(2)''&\to \mathscr{G}(3),\\
        \tilde{\gamma}:\mathscr{G}(1)\otimes\mathscr{G}(2)'&\to \mathscr{G}(2),\\
        \tilde{\gamma}:\mathscr{G}(2)\otimes\mathscr{G}(2)'\otimes\mathscr{G}(2)''&\to \mathscr{G}(4)
    \end{align*}
    and 
    \begin{center}
        \begin{codi}
            \obj{
                |(0)|\mathscr{C}(2)\otimes (\mathscr{G}(1)\otimes\mathscr{C}(2)')\otimes(\mathscr{G}(2)\otimes\mathscr{C}(2)'\otimes\mathscr{C}(1)'')\\
                |(1)|\mathscr{C}(2)\otimes (\mathscr{C}(2)\otimes\mathscr{C}(2))\\
                |(2)| \mathscr{C}(4)\\
                };
            \mor 0 "id\otimes\lambda\otimes\lambda":-> 1 "\tilde{\gamma}":-> 2;
        \end{codi}
    \end{center}
    \end{exmps}

    \begin{prop}\label{propringoperadandoperadpairsamealg}
        Let $(\mathscr{C},\mathscr{G})$ be an operad pair with associated
        ring operad $\mathscr{R}_{\mathscr{C},\mathscr{G}}$. Then
        the category of $(\mathscr{C},\mathscr{G})$-algebras 
        is isomorphic to the category of 
        $\mathscr{R}_{\mathscr{C},\mathscr{G}}$-algebras.
    \end{prop}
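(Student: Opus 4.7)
The plan is to exhibit mutually inverse constructions between $(\mathscr{C},\mathscr{G})$-algebra structures and $\mathscr{R}_{\mathscr{C},\mathscr{G}}$-algebra structures on a fixed object $(X,\epsilon,\iota)\in \mathscr{V}_e$, and to verify that the axioms on one side correspond exactly to the axioms on the other side.

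\textbf{From $(\mathscr{C},\mathscr{G})$ to $\mathscr{R}_{\mathscr{C},\mathscr{G}}$.} Given actions $\xi:\mathscr{C}(k)\otimes X^{\otimes k}\to X$ and $\nu:\mathscr{G}(k)\otimes X^{\otimes k}\to X$ compatible via $\lambda$, and given $f\in\mathcal{R}(n)$, I would define
\[
\theta_f: \mathscr{C}(|\Lambda_f|)\otimes\bigotimes_{I\in\Lambda_f}\mathscr{G}(|\Gamma_I|)\otimes X^{\otimes n}\longrightarrow X
\]
by first using iterated diagonals on $X$ to supply, for each $I\in \Lambda_f$, one copy of $X^{\otimes |\Gamma_I|}$ indexed by $\Gamma_I$, then applying $\nu$ in each $I$-slot to produce an element of $\mathscr{C}(|\Lambda_f|)\otimes X^{\otimes |\Lambda_f|}$, and finally applying $\xi$. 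The structure maps $\epsilon,\iota$ handle the factors where a variable is absent from every monomial (via the counit to $*$).

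\textbf{From $\mathscr{R}_{\mathscr{C},\mathscr{G}}$ to $(\mathscr{C},\mathscr{G})$.} Given $\theta$, I extract the additive action by specializing to the purely additive polynomial $s_n=a_{1,n}+\cdots+a_{n,n}$, for which $\mathscr{R}_{\mathscr{C},\mathscr{G}}(s_n)=\mathscr{C}(n)\otimes\mathscr{G}(1)^{\otimes n}$, and precomposing with the unit of $\mathscr{G}$. Similarly, the multiplicative action comes from the monomial $p_n=a_{1,n}\cdots a_{n,n}$, where $\mathscr{R}_{\mathscr{C},\mathscr{G}}(p_n)=\mathscr{C}(1)\otimes\mathscr{G}(n)$, precomposed with the unit of $\mathscr{C}$. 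The operad axioms for $\xi$ and $\nu$ follow from the associativity and unit diagrams of $\theta$ restricted to compositions $s_k(s_{j_1},\ldots,s_{j_k})$ and $p_k(p_{j_1},\ldots,p_{j_k})$, which remain purely additive resp.\ purely multiplicative, and from the equivariance diagrams applied to the permutations of $s_n$ and $p_n$.

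\textbf{Compatibility and the key combinatorial identity.} The distributivity action $\lambda$ of $\mathscr{G}$ on $\mathscr{C}$ is recovered by considering the composition $p_k(s_{j_1},\ldots,s_{j_k})=\prod_{i=1}^k(a_{J_{i-1}+1}+\cdots+a_{J_i})$, which expanded is exactly the polynomial whose $\Lambda$-set indexes the $\prod j_i$ distributed monomials; the composition map $\gamma$ in Definition \ref{defnoppairtoring}, step (4), reduces on this input to $\lambda$ composed with a shuffle. Conversely, the general $\theta_f$ constructed in the first direction can be recovered from the $\xi$, $\nu$, $\lambda$ data by writing any $f\in\mathcal{R}(n)$ as $s_{|\Lambda_f|}(m_I)_{I\in\Lambda_f}$ with $m_I$ a monomial, and then by associativity $\theta_f$ is determined. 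This bijectivity between the two data packages gives the two constructions as inverses.

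\textbf{Where I expect friction.} The routine part is pushing through the associativity, unit, and equivariance diagrams; these are essentially a reformulation of the corresponding operad-pair diagrams. The main obstacle is bookkeeping: making precise, for an arbitrary $f$, how the diagonals and shuffles in the construction match the indexing sets $\Lambda_f$ and $\{\Gamma_I\}_I$, particularly in the equivariance diagram for a non-effective $\psi$, where one must check that the action of $\mathscr{R}_{\mathscr{C},\mathscr{G}}(\psi)$ collapses monomials in $\Lambda_{f_m}$ to those in $\Lambda_{f_n}$ in a way compatible with the units $\epsilon,\iota$. Once one establishes that every $\theta_f$ factors uniquely through the canonical decomposition $f=s_{|\Lambda_f|}(m_I)$ and that $\gamma$ on such decompositions is exactly the $\lambda$-composite built from $\tilde{\gamma}$ for $\mathscr{C}$ and $\mathscr{G}$, the isomorphism of categories (including morphisms, which are simply maps commuting with all actions) is immediate.
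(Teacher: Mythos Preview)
Your proposal is correct and follows essentially the same approach as the paper: both directions of the correspondence are constructed exactly as you describe (specializing $\theta$ to the purely additive polynomial $a_{1,j}+\cdots+a_{j,j}$ and the purely multiplicative polynomial $a_{1,j}\cdots a_{j,j}$ with appropriate units inserted; and conversely, for general $f$, shuffling $X^{\otimes|f|}$ into monomial blocks, applying $\theta_\times$ on each, then $\theta_+$). In fact the paper's proof is terser than yours---it simply writes down the two constructions and asserts they give an isomorphism, whereas you go further in identifying the decomposition $f=s_{|\Lambda_f|}(m_I)_{I\in\Lambda_f}$ as the reason the constructions are mutually inverse and in flagging the equivariance bookkeeping as the main point requiring care.
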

    \begin{proof}
        Let $(X,\theta)$ be an $\mathscr{R}_{\mathscr{C},\mathscr{G}}$-algebra.
        We define a $(\mathscr{C},\mathscr{G})$-algebra structure as follows.
        \begin{center}
            \begin{codi}
                \obj {|(0)|\theta_{+}: \mathscr{C}(j)\otimes X^{\otimes j}
                &[14em] |(1)| \mathscr{C}(j)\otimes\mathscr{G}(1)^j\otimes X^{\otimes j}
                =\mathscr{R}_{\mathscr{C},\mathscr{G}}(a_{1,j}+\cdots+a_{j,j})\otimes X^{\otimes j}
                &[9em] |(2)| X,\\[-2em]
                |(0')|\theta_{\times}: \mathscr{G}(j)\otimes X^{\otimes j}
                & |(1')| \mathscr{C}(1)\otimes\mathscr{G}(j)\otimes X^{\otimes j}
                =\mathscr{R}_{\mathscr{C},\mathscr{G}}(a_{1,j}\cdots a_{j,j})\otimes X^{\otimes j}
                & |(2')| X.\\
                };
                \mor 0 "\mathrm{id}\times\eta^j\times\mathrm{id}":-> 1 "\theta":-> 2;
                \mor 0' "\eta\times\mathrm{id}\times\mathrm{id}":-> 1' "\theta":-> 2';
            \end{codi}
        \end{center}

        Conversely, let $(Y,\theta_+,\theta_{\times})$ be a
        $(\mathscr{C},\mathscr{G})$-algebra. 
        We define a $\mathscr{R}_{\mathscr{C},\mathscr{G}}$-algebra 
        structure as follows.

        For any $$f=\sum_{I=(i_1,i_2,\cdots,i_n)\in \{0,1\}^n} 
        \varepsilon_I a_{1,n}^{i_1}\cdots a_{n,n}^{i_n}\in \mathcal{R}(n),$$
        assume $\Lambda_f=\{I_1<\cdots<I_{|\Lambda_f|}\}$ under the 
        lexicographical order, and assume
        $I_j=\{i_{1,j}<\cdots<i_{|\Gamma_{I_j}|,j}\}$.
        Then we define
        \begin{align*}
            \theta: \mathscr{R}_{\mathscr{C},\mathscr{G}}(f)\otimes X^{\otimes|f|}
                =\mathscr{C}(|\Lambda_f|)\otimes\bigotimes_{I\in\Lambda_f}\mathscr{G}(|\Gamma_I|)\otimes X^{\otimes|f|}\to X
        \end{align*}
        to be the composition 
        \begin{center}
            \begin{codi}
                \obj {|(0)|\mathscr{R}_{\mathscr{C},\mathscr{G}}(f)\otimes X^{\otimes|f|}
                =\mathscr{C}(|\Lambda_f|)\otimes\bigotimes_{I\in\Lambda_f}\mathscr{G}(|\Gamma_I|)\otimes 
                X_1\otimes\cdots\otimes X_{|f|}\\
                |(1)| \mathscr{C}(|\Lambda_f|)\otimes\bigotimes_{j=1}^{|\Lambda_f|}\biggl(\mathscr{G}(|\Gamma_{I_j}|)\otimes 
                X_{_{1,j}}\otimes\cdots\otimes X_{i_{|\Gamma_{I_j}|,j}}\biggr)\\
                |(2)| \mathscr{C}(|\Lambda_f|)\otimes X^{\otimes|\Lambda_f|}\\
                |(3)| X\\
                };
                \mor 0 "\text{shuffle}":-> 1 "\theta_{\times}":-> 2 "\theta_+":-> 3;
            \end{codi}
        \end{center}

        The above correspondence gives an isomorphism between 
        the category of $(\mathscr{C},\mathscr{G})$-algebras 
        and the category of 
        $\mathscr{R}_{\mathscr{C},\mathscr{G}}$-algebras.
    \end{proof}

    Now we assume $(\mathscr{C},\mathscr{G})$ is an $E_\infty$ operad pair
    in $\mathscr{U}$. 

    \begin{prop}\label{propoperadpairEinftyimpliesringoperadEinfty}
        Let $(\mathscr{C},\mathscr{G})$ be an $E_\infty$ operad pair.
        Then $\mathscr{R}_{\mathscr{C},\mathscr{G}}$ is an $E_\infty$
        ring operad.
    \end{prop}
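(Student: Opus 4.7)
The plan is to verify each of the five conditions of Definition~\ref{defnEinfty} for $\mathscr{R}_{\mathscr{C},\mathscr{G}}$, using the formula $\mathscr{R}_{\mathscr{C},\mathscr{G}}(f) = \mathscr{C}(|\Lambda_f|) \times \prod_{I \in \Lambda_f} \mathscr{G}(|\Gamma_I|)$ together with the standard $E_\infty$ hypotheses on $\mathscr{C}$ and $\mathscr{G}$. Conditions (1), (2) and (4) are direct; (3) requires a ``free span'' construction; (5) is a cofibrancy assembly, which I expect to be the most technical step.

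For (1), a finite product of contractible spaces is contractible. For (2), an injection $\phi$ is automatically effective (it cannot identify preimages of $0$ or $e$), and Lemma~\ref{type} then gives that both $\tilde\phi$ and each $\phi|_{\Gamma_I}$ are bijections, so $\phi_*$ is a tensor of symmetric group actions, hence a homeomorphism. For (4), decompose $\alpha = (c; (d_I)_I)$ and read off $\phi_{1*}\alpha = \phi_{2*}\alpha$ factor-wise: $(\phi_1')^* c = (\phi_2')^* c$ in $\mathscr{C}(|\Lambda_g|)$ and $(\phi_1)_J^* d_{\phi_1'(J)} = (\phi_2)_J^* d_{\phi_2'(J)}$ in each $\mathscr{G}(|\Gamma_J|)$; freeness of the $\Sigma$-actions forces $\phi_1' = \phi_2'$ and $(\phi_1)_J = (\phi_2)_J$ for every $J$. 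Non-degeneracy of $f$ means every variable lies in some $\Gamma_I$, so these data determine $\phi_k$ on $\{1,\ldots,|f|\}$, yielding $\phi_1 = \phi_2$.

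For (3), take $h$ to be the non-degenerate polynomial with the same ``shape'' as $g$ but pairwise variable-disjoint monomials: if $\Lambda_g = \{J_1 < \cdots < J_L\}$ and $k_l := |\Gamma_{J_l}|$, let
\[ h := a_1 \cdots a_{k_1} + a_{k_1+1} \cdots a_{k_1+k_2} + \cdots + a_{k_1+\cdots+k_{L-1}+1} \cdots a_{k_1+\cdots+k_L}. \]
Define $\psi_k : h \to f_k$ by sending the $j$-th variable of the $l$-th monomial of $h$ to $(\phi_k|_{\Gamma_{I_k(l)}})^{-1}(j)$, where $I_k(l) := \tilde\phi_k^{-1}(J_l)$; then $\psi_k$ is effective with $\psi_{k*}h = f_k$. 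The shape matching induces a canonical isomorphism $\mathscr{R}_{\mathscr{C},\mathscr{G}}(h) \cong \mathscr{R}_{\mathscr{C},\mathscr{G}}(g)$, and I take $\beta$ to be the element corresponding to the common value $\phi_{1*}\alpha_1 = \phi_{2*}\alpha_2 \in \mathscr{R}_{\mathscr{C},\mathscr{G}}(g)$; a direct check using the construction of $\psi_k$ then gives $\psi_{k*}\beta = \alpha_k$.

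The main technical step is (5). For $\phi$ in $\widehat{\mathcal{R}}_{n.d.}$, take the canonical decomposition $\phi = p \circ \sigma$; since $p_*$ is a homeomorphism by the argument for (2), it suffices to show $\sigma_*$ is a cofibration. Unwinding Definition~\ref{defnoppairtoring}(2), for singular $\sigma$ the map $\sigma_*$ assembles from inserting the basepoint $\mathscr{C}(0) = *$ into the $\mathscr{C}$-factor (for each monomial of $f_m$ killed by $\sigma$) and inserting the unit $\eta : * \to \mathscr{G}(1)$ into each $\mathscr{G}$-factor (for each variable sent to $e$). Each such insertion is a cofibration by the cofibrancy conditions built into the definition of an $E_\infty$ operad, applied separately to $\mathscr{C}$ and $\mathscr{G}$; tensor products of cofibrations against the remaining contractible factors preserve cofibrations, so the assembled $\sigma_*$ is a cofibration. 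The main difficulty is bookkeeping: matching the combinatorial decomposition of $\sigma$ against the tensor product structure of $\mathscr{R}_{\mathscr{C},\mathscr{G}}(f_m)$.
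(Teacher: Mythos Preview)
Your treatment of conditions (1)--(4) is correct and matches the paper's approach closely. For (3) you essentially reprove Lemma~\ref{lemofspecialpullback} in explicit coordinates; the paper invokes that lemma directly, picks $\beta:=\psi_{1*}^{-1}\alpha_1$, and then uses that $\phi_{2*}$ is a homeomorphism to conclude $\psi_{2*}\beta=\alpha_2$, but the content is the same as your ``canonical isomorphism'' argument.

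The gap is in condition (5). You have overlooked that $\widehat{\mathcal{R}}_{n.d.}$ is \emph{by definition} the full subcategory of $\widehat{\mathcal{R}}_{eff}$ on the non-degenerate objects, so every morphism appearing in (5) is already effective. In your canonical decomposition $\phi=p\circ\sigma$ the singular part is therefore the identity, and your own observation that effective morphisms induce homeomorphisms (via Lemma~\ref{type}) finishes the argument at once. This is exactly what the paper does, disposing of (2) and (5) together in a single sentence: for $\mathscr{R}_{\mathscr{C},\mathscr{G}}$ every effective $\phi$ yields bijections $\tilde\phi$ and $\phi|_{\Gamma_I}$, hence $\phi_*$ is a homeomorphism, hence trivially a cofibration.

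Your ``assembly'' argument for a hypothetical non-trivial $\sigma$ is not only unnecessary but wrong as written. When a singular $\sigma$ kills monomials or sends variables to $e$, the induced $\sigma_*$ on $\mathscr{R}_{\mathscr{C},\mathscr{G}}$ passes from \emph{higher} arity to \emph{lower} arity on both the $\mathscr{C}$-factor and the $\mathscr{G}$-factors (composites of operadic degeneracies and projections off unused factors), not the reverse; you have described it as ``inserting'' basepoints and units, which is the wrong direction. Such maps are not cofibrations in general, and the standard definition of an $E_\infty$ operad contains no cofibrancy hypothesis on degeneracies. Fortunately none of this is relevant once you note $\widehat{\mathcal{R}}_{n.d.}\subset\widehat{\mathcal{R}}_{eff}$.
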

    \begin{proof}
        First, $\mathscr{R}_{\mathscr{C},\mathscr{G}}(f)$ is contractible
        since it is a finite product of contractible spaces. Conditions 
        (2) and (5) in Definition \ref{defnEinfty} hold since 
        $\phi_*:\mathscr{R}_{\mathscr{C},\mathscr{G}}(f)\to\mathscr{R}_{\mathscr{C},\mathscr{G}}(g)$ 
        is a homeomorphism for all effective morphism $(f,\phi,g)$ in 
        $\widehat{\mathcal{R}}_{eff}$.
        Condition (4) follows from the freeness of both $\Sigma$ actions 
        on $\mathscr{C}$ and $\mathscr{G}$.

        For condition (3), given non-degenerate objects
        $f_1,f_2$ and $\alpha_1\in\mathscr{C}(f_1),\alpha_2\in\mathscr{C}(f_2)$,
        if there exists some $g$ with effective morphisms
        $(f_1,\phi_1,g), (f_2,\phi_2,g)$ in $\widehat{\mathcal{R}}_{eff}$
        such that $$\phi_{1*}\alpha_1=\phi_{2*}\alpha_2\in\mathscr{C}(g),$$
        then $f_1,f_2,g$ are connected.

        Let $h$ be the special object of the same type as $g$. Then      
        there must exist morphisms
        $(h,\psi_1,f_1), (h,\psi_2,f_2)$ in $\widehat{\mathcal{R}}_{n.d.}$
        such that $\phi_1\psi_1=\phi_2\psi_2$ by 
        Lemma \ref{lemofspecialpullback}.
        Since $\psi_{1*}$ is a homeomorphism,
        it follows that there exists $\beta\in\mathscr{C}(h)$
        such that $\psi_{1*}\beta=\alpha_1$.
        Now 
        \begin{align*}
            \phi_{2*}\psi_{2*}\beta=\phi_{1*}\psi_{1*}\beta=\phi_{1*}\alpha_1=\phi_{2*}\alpha_2
        \end{align*}
        implies that $\psi_{2*}\beta=\alpha_2$ since $\phi_{2*}$ is a homeomorphism,
        hence condition (3) follows.
    \end{proof}

    Therefore, applying the Comparison Theorem \ref{thmcompare1}
    and the classical multiplicative infinite loop machine as shown in \cite{may1982multiplicative},
    we get a multiplicative infinite loop machine defined 
    on algebras
    over any $E_\infty$ ring operad.

    \begin{thm}\label{thmgpcom}
        Let $(\mathscr{K},\mathscr{L})$ be the canonical 
        operad pair with associated monad pair 
        $(\mathbb{K},\mathbb{L})$ and associated ring operad
        $\mathscr{R}$.
        Let $\mathscr{C}$ be an arbitrary $E_\infty$ ring 
        operad. We denote the monad associated 
        to $\mathscr{C}\times\mathscr{R}$
        and $\mathscr{R}$
        by $\mathbb{D}$
        and $\mathbb{R}$, respectively.

        Then for any $X$ in $\mathscr{C}[\mathscr{U}_e]$, 
        the following composition is a group completion.
        $$X\simeq B(\mathbb{D},\mathbb{D},X) 
        \to B(\mathbb{R},\mathbb{D},X)\simeq B(\mathbb{K},\mathbb{K},B(\mathbb{R},\mathbb{D},X))
        \to\Omega^\infty B(\Sigma^\infty,\mathbb{K},B(\mathbb{R},\mathbb{D},X))
        $$

        Moreover, $B(\Sigma^\infty,\mathbb{K},B(\mathbb{R},\mathbb{D},X))$
        is an $E_\infty$ ring spectrum.
    \end{thm}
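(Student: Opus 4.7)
The plan is to reduce the statement to May's classical multiplicative infinite loop machine by replacing the input $\mathscr{C}$-algebra $X$ with a homotopy equivalent $(\mathscr{K},\mathscr{L})$-algebra to which the classical machine of \cite{may1982multiplicative} applies directly.

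First I would observe that $\mathscr{C}\times\mathscr{R}$ is itself an $E_\infty$ ring operad: contractibility of each component is immediate, and the equivariance-type conditions (3)--(4) and the cofibration condition (5) of Definition \ref{defnEinfty} are preserved under finite products. The two projections $\mathscr{C}\times\mathscr{R}\to\mathscr{C}$ and $\mathscr{C}\times\mathscr{R}\to\mathscr{R}$ induce monad maps $\mathbb{D}\to\mathbb{C}$ and $\mathbb{D}\to\mathbb{R}$, and the first makes every $\mathscr{C}$-algebra $X$ canonically a $\mathbb{D}$-algebra. The standard augmentation of the two-sided bar construction then yields the first equivalence $X\simeq B(\mathbb{D},\mathbb{D},X)$.

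The next step is to show that the map $B(\mathbb{D},\mathbb{D},X)\to B(\mathbb{R},\mathbb{D},X)$ induced by $\mathbb{D}\to\mathbb{R}$ on the left variable is a weak equivalence. At each simplicial level it is the natural map $\mathbb{D}\mathbb{D}^{k}X\to\mathbb{R}\mathbb{D}^{k}X$; the contractibility of every $\mathscr{C}(f)$ makes this a levelwise weak equivalence, and condition (5) ensures properness of both simplicial spaces so that the equivalence descends to geometric realizations. This is exactly the content of Comparison Theorem \ref{thmcompare1} applied to the pair $\mathscr{C}\times\mathscr{R}$ and $\mathscr{R}$. Writing $Y:=B(\mathbb{R},\mathbb{D},X)$, the result is a genuine $\mathscr{R}$-algebra, and by Proposition \ref{propringoperadandoperadpairsamealg} this is the same data as a $(\mathscr{K},\mathscr{L})$-algebra in the classical sense. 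In particular $Y$ carries a $\mathbb{K}$-action, so $B(\mathbb{K},\mathbb{K},Y)\to Y$ is a standard deformation retraction, and the multiplicative $\mathbb{L}$-action on $Y$ prolongs levelwise to $B(\mathbb{K},\mathbb{K},Y)$ through the operad-pair action $\lambda$, giving the third equivalence.

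At that point the final arrow $B(\mathbb{K},\mathbb{K},Y)\to\Omega^\infty B(\Sigma^\infty,\mathbb{K},Y)$ is literally May's classical additive group completion applied to the $(\mathbb{K},\mathbb{L})$-algebra $Y$, and his multiplicative infinite loop machine from \cite{may1982multiplicative} endows $B(\Sigma^\infty,\mathbb{K},Y)$ with an $E_\infty$ ring spectrum structure. Composing this classical group completion with the preceding equivalences yields the asserted group completion of $X$. The principal obstacle is the second arrow $B(\mathbb{D},\mathbb{D},X)\to B(\mathbb{R},\mathbb{D},X)$, because this is the only step where the new ring-operadic world actually interfaces with the classical operad-pair world; establishing it requires the full strength of Comparison Theorem \ref{thmcompare1}, whose deferred appendix argument exploits the combinatorial filtration of $\widehat{\mathcal{R}}_{n.d.}$ from Section \ref{Sectioncomb} and the resulting filtration of the monad $\mathbb{D}$ from Section \ref{Sectiondiffoperad}. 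Once that is granted, the remainder amounts to stringing together standard two-sided bar construction equivalences and invoking the classical multiplicative infinite loop machine.
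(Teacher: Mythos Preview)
Your proposal is correct and matches the paper's approach exactly. The paper does not give a separate proof of this theorem; it simply states that it follows by combining the Comparison Theorem \ref{thmcompare1} with the classical multiplicative infinite loop machine of \cite{may1982multiplicative}, and your write-up is precisely the unpacking of that sentence, identifying $B(\mathbb{R},\mathbb{D},X)$ with a $(\mathscr{K},\mathscr{L})$-algebra via Proposition \ref{propringoperadandoperadpairsamealg} and then feeding it into May's machine.
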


    \subsection{Ring operads and categories of ring operators}\label{Sectionringoperator}

    Historically, the construction of $E_\infty$ ring spaces
    from bipermutative categories is given in \cite{may1982multiplicative} and
    \cite{TheconstructionofE_infringspacesfrombipermutativecategories}
    in which an intermediate theory is used, namely, the theory of 
    categories of ring operators. We will give an alternative
    construction in Section \ref{Sectionsymbimonoidal} using 
    the theory of ring operads. Before that, we briefly describe a 
    comparison between ring operads
    and categories of ring operators. We first recall some notations defined
    in \cite{TheconstructionofE_infringspacesfrombipermutativecategories}.

    Let $\mathscr{F}$ be the category of 
    finite based sets $\mathbf{n}=\{0,1,2,\cdots,n\}$, with $0$ as
    basepoint, and based functions. Let $\Pi\subset\mathscr{F}$ be the subcategory
    whose morphisms are the based functions $\phi:\mathbf{m}\to\mathbf{n}$ such that 
    $|\phi^{-1}(j)|\leqslant1$ for
    $1 \leqslant j \leqslant n$, where $|S|$ denotes the cardinality of a 
    finite set $S$.

    \begin{defn}\label{defnoffwrf}
        Let $\varepsilon:\mathscr{K}\to \mathscr{F}$ and 
        $\mathscr{D}\to \mathscr{F}$ be two topological categories over $\mathscr{F}$
        which have the same objects as $\mathscr{F}$. Then 
        the objects of $\mathscr{K}\wr\mathscr{D} $
        are $n$-tuples of non-negative integers for all $n\geqslant 0$.
        We write such an object as $(n,S)=(n,s_1,\cdots,s_n)$.
        Moreover, morphisms are defined as 
        $$\mathscr{K}\wr\mathscr{D}((m,R),(n,S)):=
        \coprod_{\phi\in\mathscr{F}(\mathbf{m}_*,\mathbf{n}_*)}
        \varepsilon^{-1}(\phi)\times 
        \prod_{1\leqslant j \leqslant n} \mathscr{D}(\bigwedge_{\phi(i)=j}\mathbf{r}_{i*},\mathbf{s}_{j*})$$
        where the empty smash product is $\mathbf{1}_*$.
    \end{defn}

    \begin{defn}\label{defnofcatofringop}
        A category of ring operators is a topological category $\mathscr{J}$ 
        with objects those of $\Pi \wr \Pi$ such that the inclusion 
        $\Pi \wr \Pi \subset \mathscr{F}\wr\mathscr{F}$ factors as the composite of
        an inclusion $\Pi \wr \Pi \subset \mathscr{J}$ and a surjection 
        $\mathscr{J} \to \mathscr{F}\wr\mathscr{F}$ , both of which are the
        identity on objects. We require the maps 
        $\mathscr{J}((l,Q),(m,R))\to \mathscr{J}((l,Q),(n,S))$
        induced by an injection $(\phi,\chi):(m,R)\to (n,S)$  
        to be $\Sigma(\phi,\chi)$-cofibrations.

        Here $\Sigma(\phi,\chi)$ is the group of automorphisms 
        $(\sigma,\tau) :(n,S)\to (n,S)$
        such that $(\sigma,\tau)\mathrm{Im}(\phi,\chi)\subset \mathrm{Im}(\phi,\chi)$
        where $\mathrm{Im}(\phi,\chi)=\sqcup_i\mathrm{Im}\chi_i\subset\sqcup_i\mathbf{s_i}$.

        We denote the category of $\mathscr{J}$-spaces (functor category 
        from $\mathscr{J}$ to $\mathscr{U}$) by $\mathscr{J}[\mathscr{U}]$
        and the category of special $\mathscr{J}$-spaces 
        (see \cite[Definition 5.5]{TheconstructionofE_infringspacesfrombipermutativecategories})
        by $\mathscr{J}^s[\mathscr{U}]$
    \end{defn}

    Now we assign to each $E_\infty$ ring operad a 
    category of ring operators 
    $\tilde{\mathscr{C}}$. We give the abstract Definition 
    \ref{defnconsofringandcat} and then 
    show an Example \ref{examconsofringandcat} to explain it.

    \begin{defn}\label{defnconsofringandcat}
        Let $\mathscr{C}$ be an $E_\infty$ ring operad. 
        We define a category of ring operators 
        $\varepsilon:\tilde{\mathscr{C}}\to \mathscr{F}\wr\mathscr{F}$
        as follows.

        Let $(\phi,d)=(\phi,d_1,\cdots,d_n)\in\mathscr{F}((m,R),(n,S))$
        be any morphism in $\mathscr{F}$.
        To each pair $(h,j)$ with $1\leqslant j\leqslant n$,
        $1\leqslant h \leqslant s_j$, we assign a polynomial 
        $f_{\phi,d,h,j} $ in 
        $$\mathcal{R}(|R|)\cup\{1\}\subset \mathbb{Z}[a_{1,|R|},\cdots,a_{|R|,|R|}]$$
        as follow.
        Here $|R|:=\sum_i r_i$.

        (1) If $\phi^{-1}(j)\neq \emptyset$,
        then we assume $\phi^{-1}(j)=\{i_1<\cdots< i_l\}$
        and define 
        \begin{align*}
            f_{\phi,d,h,j}=
            \sum_{(k_1,\cdots,k_l)\in d_j^{-1}(h)}
            \prod_{t=1}^l
            a_{\sum_{s=1}^{i_t-1}r_s+k_1,|R|}.
        \end{align*}

        (2) If $\phi^{-1}(j)=\emptyset$, then 
        $$f_{\phi,d,h,j}:=\begin{cases}
            0_{|R|} &\text{ if } d_j^{-1}(h)=0\in\mathbf{1}_*,\\
            1_{|R|} &\text{ if } d_j^{-1}(h)=1\in\mathbf{1}_*.\\
        \end{cases}$$

        Then we define 
        $\varepsilon^{-1}(\phi,d):=\prod_{(h,j)}\mathscr{C}(f_{\phi,d,h,j}) $.
        Here $\mathscr{C}(1)$ consists of a single point.

        Note that for 
        \begin{align*}
            (\phi,d)=(\phi,d_1,\cdots,d_n)&\in\mathscr{F}\wr\mathscr{F}((m,R),(n,S)),\\
            (\phi',d')=(\phi',d'_1,\cdots,d'_n)&\in\mathscr{F}\wr\mathscr{F}((l,Q),(m,R)),
        \end{align*}
        the polynomial associated to their composition is 
        \begin{align*}
            &f_{(\phi,d)(\phi',d'),h,j}(a_{1,|Q|},\cdots,a_{|Q|,|Q|})\\
            =&f_{\phi,d,h,j}(f_{\phi',d',1,1},\cdots,
        f_{\phi',d',q_1,1},\cdots,f_{\phi',d',1,l},\cdots,f_{\phi',d',q_l,l})
        (a_{1,|Q|},\cdots,a_{|Q|,|Q|})
        \end{align*}
        
        The composition of morphisms in $\tilde{\mathscr{C}}$
        is defined to be the induced maps of $f_{\phi,d,h,j}$'s by composition
        of polynomials and evaluation of $1$'s.

        When $(\phi,d)=(\phi,d_1,\cdots,d_n)\in\mathscr{\Pi}\wr\mathscr{\Pi}((m,R),(n,S))$,
        all polynomials $f_{\phi,d,h,j}$'s are of the form 
        $0_{|R|}$, $1_{|R|}$, $a_{k,|R|}$,
        so $\varepsilon^{-1}(\phi,d)$ is a product of 
        $\mathscr{C}(0_{|R|})\cong*$, $\mathscr{C}(1_{|R|})\cong*$,
        $\mathscr{C}(a_{k,|R|})$. Therefore,
        there is a well-defined functor 
        $\Pi\wr\Pi\to \tilde{\mathscr{C}}$
        induced by the unit $\eta:*\to \mathscr{C}(a_{1,1})\to \mathscr{C}(a_{k,|R|})$.

        Moreover, when $(\phi,\chi):(m,R)\to (n,S)$ is an injection and 
        $\mathscr{C}$ is $E_\infty$, for any $(\psi,d):(l,Q)\to (m,R)$
        the induced map 
        $\varepsilon^{-1}(\psi,d)\to \varepsilon^{-1}((\phi,\chi)\circ(\psi,d))$
        is a permutation on components together with a 
        product with $*\to \mathscr{C}(0)$ and $*\to \mathscr{C}(1)$, so        
        the induced map  
        $\tilde{\mathscr{C}}((l,Q),(m,R))\to \tilde{\mathscr{C}}((l,Q),(n,S))$ is a 
        $\Sigma(\phi,\chi)$-cofibration.
    \end{defn}

    \begin{exmp}\label{examconsofringandcat}
        For example, consider 
        \begin{align*}
            (\phi,d)=(\phi,d_1)&\in\mathscr{F}\wr\mathscr{F}((2,(2,1)),(1,1)),\\
            (\phi',d')=(\phi',d'_1,d'_2)&\in\mathscr{F}\wr\mathscr{F}((2,(2,2)),(2,(2,1))),
        \end{align*}
        where 
        \begin{align*}
            \phi(1)=\phi(2)=1\\
            d_1(1,1)=d_1(2,1)=1\\
            \phi'(1)=\phi'(2)=1\\
            d'_1(1,1)=d'_1(2,2)=1\\
            d'_1(1,2)=2\\
            d'_1(2,1)=0\\
            d'_2(1)=1
        \end{align*}

        Then 
        \begin{align*}
            f_{\phi,d,1,1}&=a_{1,3}a_{3,3}+a_{2,3}a_{3,3}\\
            f_{\phi',d',1,1}&=a_{1,4}a_{3,4}+a_{2,4}a_{4,4}\\
            f_{\phi',d',2,1}&=a_{1,4}a_{4,4}\\
            f_{\phi',d',1,2}&=1_4
        \end{align*}
        and
        \begin{align*}
            f_{(\phi,d)(\phi',d'),1,1}=&a_{1,4}a_{3,4}+a_{2,4}a_{4,4}+a_{1,4}a_{4,4}
        \end{align*}
        while 
        \begin{align*}
            &f_{\phi,d,1,1}(f_{\phi',d',1,1},f_{\phi',d',2,1},a_{1,1})\\
            =&(a_{1,13}a_{3,13}+a_{2,13}a_{4,13})a_{13,13}+(a_{5,13}a_{8,13})a_{13,13}\\
            =&(a_{1,13}a_{3,13}+a_{2,13}a_{4,13}+a_{5,13}a_{8,13})a_{13,13}
        \end{align*}

        Therefore, the composition of morphisms in $\tilde{\mathscr{C}}$
        is given by
        \begin{center}
            \begin{codi}
                \obj {|(1)| \mathscr{C}(f_{\phi,d,1,1})\times\mathscr{C}(f_{\phi',d',1,1})
                \times \mathscr{C}(f_{\phi',d',2,1}) \times *\\[-1em]
                |(1')|\mathscr{C}(f_{\phi,d,1,1})\times\mathscr{C}(f_{\phi',d',1,1})
                \times \mathscr{C}(f_{\phi',d',2,1}) \times \mathscr{C}(a_{1,1}) \\[-1em]
                |(2)|\mathscr{C}((a_{1,13}a_{3,13}+a_{2,13}a_{4,13}+a_{5,13}a_{8,13})a_{13,13})\\[-1em]
                |(3)|\mathscr{C}(a_{1,4}a_{3,4}+a_{2,4}a_{4,4}+a_{1,4}a_{4,4})\\
                };
                \mor 1 "\eta":-> 1' "\gamma":-> 2 "\psi_*":-> 3;
            \end{codi}
        \end{center}
        where 
        \begin{align*}
            \psi:\{0,e,1,2,\cdots,12,13\}&\to \{0,e,1,2,\cdots,4\}\\
                n &\mapsto [n\ (\mathrm{mod}\ 4)] \text{ for } n=1,2,\cdots,12\\
                13&\mapsto e.
        \end{align*}
        Here we require $[n\ (\mathrm{mod}\ 4)]\in \{1,2,3,4\}$.
    \end{exmp}

    With this definition, we can construct a special $\tilde{\mathscr{C}}$
    space from any $\mathscr{C}$ algebra in $\mathscr{U}_e$.

    \begin{defn}\label{defnringandcatonalg}
        Let $\mathscr{C}$ be an $E_\infty$ ring operad with the 
        associated category of ring operad $\tilde{\mathscr{C}}$.
        Then there is a canonical functor $
        \nu:\mathscr{C}[\mathscr{U}_e]\to \tilde{\mathscr{C}}^s[\mathscr{U}]$
        defined as follows.

        Let $(X,\theta)$ be an object in $\mathscr{C}[\mathscr{U}_e]$.
        Then we define 
        \begin{align*}
            \nu X:\tilde{\mathscr{C}} &\to \mathscr{U}\\
            (n,S)&\mapsto X^{s_1}\times\cdots\times X^{s_n}.
        \end{align*}

        Moreover, for any $(\phi,d)=(\phi,d_1,\cdots,d_n)\in\mathscr{F}((m,R),(n,S))$
        and $(\alpha_{h,j})\in\varepsilon^{-1}(\phi,d):=\prod_{(h,j)}\mathscr{C}(f_{\phi,d,h,j}) $,
        we define
        \begin{align*}
            \nu X(\alpha_{h,j}):X^{r_1}\times\cdots\times X^{r_m} &\to X^{s_1}\times\cdots\times X^{s_n}\\
            (x_{1,1},\cdots,x_{r_1,1},\cdots,x_{1,m},\cdots,x_{r_m,m})
            &\mapsto (y_{1,1},\cdots,y_{s_1,1},\cdots,y_{1,n},\cdots,y_{s_n,n})
        \end{align*}
        where 
        $$y_{h,j}=\theta (\alpha_{h,j},x_{1,1},\cdots,x_{r_1,1},\cdots,x_{1,m},\cdots,x_{r_m,m})$$
        for $f_{\phi,d,h,j}\neq 1$ and $y_{h,j}=e$ otherwise.
    \end{defn}

    \begin{thm}
        Let $\mathscr{C}$ be an $E_\infty$ ring operad with 
        the associated category of ring operators $\tilde{\mathscr{C}}$.
        Then $
        \nu:\mathscr{C}[\mathscr{U}_e]\to \tilde{\mathscr{C}}^s[\mathscr{U}]$
        induces an equivalence on homotopy categories.
    \end{thm}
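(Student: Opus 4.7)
The strategy is to exhibit $\nu$ as an equivalence by constructing a quasi-inverse functor $\mu: \tilde{\mathscr{C}}^s[\mathscr{U}] \to \mathscr{C}[\mathscr{U}_e]$ at the level of homotopy categories, and then verifying that both composites are naturally weakly equivalent to the identity. Given a special $\tilde{\mathscr{C}}$-space $Y$, the natural candidate is $\mu Y := Y(1,1)$, with basepoints $\iota: \kappa_0 \to Y(1,1)$ and $\epsilon: \kappa_e \to Y(1,1)$ induced by the two morphisms $(0) \to (1,1)$ in $\mathscr{F}\wr\mathscr{F}$ whose single $d_1$-component is the constant at $0$ and the identity, respectively (whose fibers in $\tilde{\mathscr{C}}$ are $\mathscr{C}(0_0)$ and $\mathscr{C}(1_0)$, both terminal).

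For the $\mathscr{C}$-algebra structure on $Y(1,1)$, I would, for each $f \in \mathcal{R}(k)$, produce a canonical morphism $(\phi_f, d_f): (k, (1,\ldots,1)) \to (1,1)$ in $\mathscr{F}\wr\mathscr{F}$ with $f_{\phi_f, d_f, 1, 1} = f$: concretely, $\phi_f^{-1}(1)$ records the essential variables of $f$ and $d_f$ encodes the indexing set $\Lambda_f$ of monomial summands. This yields a structure map $\mathscr{C}(f) \times Y(k, (1,\ldots,1)) \to Y(1,1)$, and composing with a homotopy inverse of the Segal map
$$Y(k, (1,\ldots,1)) \xrightarrow{\,\sim\,} Y(1,1)^k,$$
which is a weak equivalence by the speciality of $Y$, gives the desired action $\mathscr{C}(f) \times Y(1,1)^k \to Y(1,1)$ in the homotopy category. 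To rigidify this up-to-homotopy datum into a genuine $\mathscr{C}$-algebra, I would pass through a two-sided bar construction $B(\mathbb{C}, \mathbb{C}, -)$ (using the monad $\mathbb{C}$ from Definition \ref{defnofmonad}), or equivalently replace $Y$ by a bar resolution over $\tilde{\mathscr{C}}$; the $\Sigma(\phi,\chi)$-cofibration hypothesis in Definition \ref{defnofcatofringop} ensures these bar constructions compute the correct homotopy types.

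Verifying $\mu \circ \nu \simeq \mathrm{id}$ is essentially tautological: when $Y = \nu X$, all Segal maps are identities, so the recovered action on $\nu X(1,1) = X$ coincides with the original action $\theta$ by construction of $\nu$ in Definition \ref{defnringandcatonalg}. For $\nu \circ \mu \simeq \mathrm{id}$, both $Y$ and $\nu(\mu Y)$ are special $\tilde{\mathscr{C}}$-spaces that agree on $(1,1)$; since any special $\tilde{\mathscr{C}}$-space is determined up to level-wise weak equivalence by its value at $(1,1)$ (via the iterated Segal equivalences $Y(n,S) \xrightarrow{\sim} Y(1,1)^{s_1+\cdots+s_n}$, which are natural in $\tilde{\mathscr{C}}$-morphisms), a comparison of the induced maps on $(1,1)$ propagates to a weak equivalence $Y \simeq \nu(\mu Y)$ at every object.

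The main obstacle is the rectification step: the $\mathscr{C}$-action on $Y(1,1)$ emerges only up to homotopy from inverting Segal maps, and one must verify that all associativity, unit, and equivariance diagrams of Definition \ref{Calgebra} close up coherently. A clean way around this is to work systematically in the homotopy category, treating every weak equivalence as invertible, and to appeal to the Comparison Theorem \ref{thmcompare1} so that one is free to replace $\mathscr{C}$ by any convenient equivalent $E_\infty$ ring operad (for instance $\mathscr{C} \times \mathscr{R}_{\mathscr{K},\mathscr{L}}$) for which the rectification is more transparent. A secondary combinatorial subtlety is to check that the assignment $f \mapsto (\phi_f, d_f)$ transforms equivariantly under $\widehat{\mathcal{R}}$-morphisms; this is a bookkeeping exercise using Lemma \ref{type}, which matches the relevant symmetries on the two sides and ensures the equivariance axiom on $Y(1,1)$ follows from functoriality of $Y$ over $\tilde{\mathscr{C}}$.
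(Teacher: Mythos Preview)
Your approach is fundamentally different from the paper's and has a genuine gap at exactly the point you flag as ``the main obstacle.'' The paper does \emph{not} attempt to build a quasi-inverse $\mu$ directly. Instead it reduces to the known case of operad pairs: setting $\mathscr{D}:=\mathscr{C}\times\mathscr{R}_{\mathscr{K},\mathscr{L}}$, it assembles a $3\times 2$ commutative ladder whose bottom row is $\nu:(\mathscr{K},\mathscr{L})[\mathscr{U}_e]\to(\hat{\mathscr{L}}\wr\hat{\mathscr{K}})^s[\mathscr{U}]$. The vertical arrows are equivalences on homotopy categories by the Comparison Theorem \ref{thmcompare1} (ring-operad side) and \cite[Theorem~5.11]{TheconstructionofE_infringspacesfrombipermutativecategories} (category-of-ring-operators side), and the bottom $\nu$ is identified with the functor $R$ of \cite{TheconstructionofE_infringspacesfrombipermutativecategories}, which is an equivalence by Theorems~8.6 and~10.6 there. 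A two-out-of-three argument on the ladder then forces the top $\nu$ to be an equivalence. No explicit inverse is ever built.

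The problem with your route is that the rectification step is not merely a technicality; it is precisely what the paper's Remark after the theorem singles out as the obstruction to a direct proof. Inverting Segal maps produces only a homotopy-coherent $\mathscr{C}$-action on $Y(1,1)$, and your proposed cure---a bar construction $B(\mathbb{C},\mathbb{C},-)$---presupposes a genuine $\mathbb{C}$-algebra input, which is what you are trying to produce. The alternative, a bar resolution over $\tilde{\mathscr{C}}$, would require a tractable description of the monad on $\Pi\wr\Pi$-spaces associated to $\tilde{\mathscr{C}}$; the paper explicitly notes that such a description is only available when $\mathscr{C}$ arises from an operad pair, and that the general case ``cannot (at least not in an obvious way)'' be handled this way. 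Your aside about replacing $\mathscr{C}$ by $\mathscr{C}\times\mathscr{R}_{\mathscr{K},\mathscr{L}}$ is in fact the germ of the paper's argument, but once you make that move the point is no longer to rectify $Y(1,1)$ directly---it is to invoke the already-proved equivalence for operad pairs and transport it along the Comparison Theorem. Your verification of $\nu\circ\mu\simeq\mathrm{id}$ is also incomplete: agreeing on $(1,1)$ and being special does not by itself furnish a natural zig-zag of $\tilde{\mathscr{C}}$-space maps between $Y$ and $\nu(\mu Y)$.
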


    \begin{proof}
        Let $(\mathscr{K},\mathscr{L})$ be the canonical
        operad pair with the associated ring operad $\mathscr{R}_{\mathscr{K},\mathscr{L}}$.
        Let $\mathscr{D}:=\mathscr{C}\times \mathscr{R}_{\mathscr{K},\mathscr{L}}$.

        Consider the following diagram
        \begin{center}
            \begin{codi}
                \obj {|(1)| \mathscr{C}[\mathscr{U}_e]
                &[9em] |(2)| \tilde{\mathscr{C}}^s[\mathscr{U}]\\[-1em]
                |(3)| \mathscr{D}[\mathscr{U}_e]
                &|(4)| \tilde{\mathscr{D}}^s[\mathscr{U}]\\[-1em]
                |(5)|  \mathscr{R}_{\mathscr{K},\mathscr{L}}[\mathscr{U}_e]
                =(\mathscr{K},\mathscr{L})[\mathscr{U}_e]
                &|(6)| \tilde{\mathscr{R}}^s_{\mathscr{K},\mathscr{L}}[\mathscr{U}]
                =(\hat{\mathscr{L}}\wr \hat{\mathscr{K}})^s[\mathscr{U}],\\
                };
                \mor 1 "\nu":-> 2;
                \mor 3 "\nu":-> 4;
                \mor 5 "\nu":-> 6;
                \mor 1 -> 3;
                \mor 2 -> 4;
                \mor 5 -> 3;
                \mor 6 -> 4;
            \end{codi}
        \end{center}

        Here the above diagram commutes by Definition \ref{defnringandcatonalg}.
        Comparing with Definition \ref{defnoppairtoring}, Definition \ref{defnconsofringandcat} and 
        \cite[Definition 1.2, Definition 5.1]{TheconstructionofE_infringspacesfrombipermutativecategories},
        we get $$\tilde{\mathscr{R}}_{\mathscr{K},\mathscr{L}}
                =(\hat{\mathscr{L}}\wr \hat{\mathscr{K}}).$$

        Moreover, comparing Definition \ref{defnringandcatonalg} and 
        \cite[Definition 6.1]{TheconstructionofE_infringspacesfrombipermutativecategories},
        the bottom horizontal functor $\nu$ coincides with $R=R''R'$
        defined in \cite{TheconstructionofE_infringspacesfrombipermutativecategories}.

        All vertical functors in the above diagram induce 
        equivalences on homotopy categories by Theorem \ref{thmcompare1}
        and \cite[Theorem 5.11]{TheconstructionofE_infringspacesfrombipermutativecategories}.
        The bottom horizontal functor
        induces an
        equivalence on homotopy categories
        by \cite[Theorem 8.6, Theorem 10.6]{TheconstructionofE_infringspacesfrombipermutativecategories}.

        Therefore, all functors in the above diagram induce 
        equivalences on homotopy categories.
    \end{proof}

    \begin{rem}
        In the proof of the above theorem, we reduced it to the special
        case when $\mathscr{C}$ is the ring operad associated to 
        some $E_\infty$ operad pair. The special case is also not 
        easy to prove but it has been proved 
        in \cite{TheconstructionofE_infringspacesfrombipermutativecategories}.
        However, the proof of the special case cannot 
        (at least not in an obvious way) be generalized to prove 
        the theorem because the monad in $\Pi\wr\Pi$-spaces associated
        to $\tilde{\mathscr{C}}$ for a general $E_\infty$ ring operad 
        $\mathscr{C}$ is hard to describe.
        Only the monad associated to $\hat{\mathscr{L}}\wr \hat{\mathscr{K}}$
        for some $E_\infty$ operad pair $(\mathscr{K},\mathscr{L})$
        has been described in \cite{TheconstructionofE_infringspacesfrombipermutativecategories}.
    \end{rem}

    \section{Applications in category theory}\label{sectioncategory}

    \subsection{Ring operad for symmetric bimonoidal categories}\label{Sectionsymbimonoidal}

    As an application, we show that the 
    classifying space of a symmetric bimonoidal category
    is equivalent to some $(\mathscr{K},\mathscr{L})$-algebra,
    where $(\mathscr{K},\mathscr{L})$ is the canonical operad pair.
    This is originally proved in 
    \cite{TheconstructionofE_infringspacesfrombipermutativecategories}.
    
    In \cite{OperadsforSymmetricMonoidalCategories}, Elmendorf
    constructs an operad in the category of small categories
    whose algebras are precisely symmetric monoidal 
    categories. We modify this construction to get a ring operad $\mathscr{S}$
    in $(\mathbf{Cat},\times,*)$ 
    such that $\mathscr{S}$-algebras are precisely tight symmetric bimonoidal 
    categories (with strict zero object and unit object).
    This ring operad cannot be induced by any operad pair, and it 
    shows the difference between operad pairs and ring operads. 
    
    Originally, the coherence theorem for 
    symmetric bimonoidal 
    categories was first proved by Laplaza in \cite{laplaza2006coherence},
    in which some details are omitted. A complete proof is given in 
    \cite{johnson2021bimonoidal}, and our notations in this section 
    are also due to \cite{johnson2021bimonoidal}.

    \begin{notn}
        Let $E:(\mathbf{Set},\times,*)\to (\mathbf{Cat},\times,*)$
        be the functor sending a set $X$ to its 
        indiscrete category; that is, objects in $EX$ are elements
        in $X$ and each hom-set consists of exactly one element.
    \end{notn}

    Here are some propositions about this functor $E$. All of them can be 
    easily checked by definition.

    \begin{prop}\label{propofE}
        (1) $E$ is fully faithful.

        (2) $E$ is symmetric monoidal.

        (3) The nerve of $EX$ is precisely the free simplicial set generated
        by vertexes $X$. Here the free simplicial set functor is the left 
        adjoint of $X_*\mapsto X_0$.

        (4) The classifying space $BEX$ of $EX$ is always contractible.

        (5) If $X\to X'$ is an injection between sets, then the 
        induced map $BEX\to BEX'$ is a cofibration.
    \end{prop}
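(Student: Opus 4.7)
The plan is to verify each of the five items separately by unwinding definitions, exploiting the fact that $EX$ is the indiscrete (chaotic) category on $X$, where every hom-set is a singleton. For (1), a functor $F\colon EX \to EY$ is determined entirely by its object assignment: since each hom-set in $EY$ is a single point, the action on morphisms is forced, and conversely any function $X \to Y$ extends uniquely to a functor. This yields a natural bijection $\mathbf{Cat}(EX,EY) \cong \mathbf{Set}(X,Y)$. For (2), I would observe that $E$ sends the one-point set to the terminal category and that both $E(X \times Y)$ and $EX \times EY$ are indiscrete on the object set $X \times Y$, hence canonically isomorphic; the coherence isomorphisms for associator, unitors, and braiding transport directly from $(\mathbf{Set},\times,*)$.

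For (3), observe that $N(EX)_n$, the set of composable $n$-chains in $EX$, is naturally $X^{n+1}$ because each consecutive morphism is uniquely determined by its source and target. Face maps forget a coordinate and degeneracies duplicate one, so $N(EX)$ is precisely the $0$-coskeleton $\operatorname{cosk}_0 X$, which by a direct adjunction check represents the functor $Y_\bullet \mapsto \mathbf{Set}(X, Y_0)$ in the stated sense. For (4), pick any $x_0 \in X$: then $x_0$ is terminal in $EX$, so the unique natural transformation from $\mathrm{id}_{EX}$ to the constant functor at $x_0$ induces, after passing to $B = |N(-)|$, a homotopy from $\mathrm{id}_{BEX}$ to a constant map. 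Hence $BEX$ is contractible (the case $X = \emptyset$ being either vacuous or excluded by convention). For (5), an injection $X \hookrightarrow X'$ yields injections $X^{n+1} \hookrightarrow (X')^{n+1}$ in every simplicial degree, so $N(EX) \to N(EX')$ is a levelwise monomorphism of simplicial sets; geometric realization then produces a closed cofibration of topological spaces by standard facts.

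There is no genuine obstacle in any of these: the proposition is essentially a sequence of routine definition checks. The only mild subtleties are the edge case $X = \emptyset$ in (4) (which must be handled by convention or silently excluded) and the adjunction convention in (3), since the defining property of $\operatorname{cosk}_0$ may be described as either side of the adjunction with $(-)_0$ depending on direction. None of these would meaningfully change the arguments above.
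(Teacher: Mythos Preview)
Your proposal is correct, and in fact goes well beyond what the paper does: the paper gives no proof of this proposition at all, only the remark preceding it that ``all of them can be easily checked by definition.'' Your item-by-item verification via the terminal-object contraction, levelwise monomorphisms, and the identification $N(EX)_n \cong X^{n+1}$ is the standard route and is exactly what one would supply if asked to fill in the omitted details.

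One comment worth making explicit: your identification of $N(EX)$ with $\operatorname{cosk}_0 X$ is correct, but note that $\operatorname{cosk}_0$ is the \emph{right} adjoint to $(-)_0$, not the left. The paper's phrasing in (3), calling it ``the free simplicial set functor \ldots\ the left adjoint of $X_*\mapsto X_0$,'' appears to be a slip; the left adjoint to $(-)_0$ is the discrete simplicial set, which is not $N(EX)$. You already flagged this as a possible convention issue, and you are right to do so---the mathematical content of your argument (that $N(EX)_n = X^{n+1}$ with the evident face and degeneracy maps) is unaffected.
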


    Therefore, if $\mathscr{C}$ is a ring operad in $(\mathbf{Set},\times,*)$,
    then $E\mathscr{C}$ is a ring operad in $(\mathbf{Cat},\times,*)$.
    We first construct a ring operad $\mathscr{S}_{set}$ in $(\mathbf{Set},\times,*)$
    as follows.

    As defined in \cite{johnson2021bimonoidal},
    a $\{+,\times\}$-algebra is a set with two specific elements $0,1$
    and two binary operations $+,\times$.
    Let $A_n$ be  
    the free $\{+,\times\}$-algebra
    generated by $ \{0_n,1_n,a_{1,n},\cdots,a_{n,n}\} $ quotient out relations 
    \begin{align*}
        0_n+x&=x+0_n=x,\\
        0_n\times x&=x\times 0_n=0_n,\\
        1_n\times x&=x\times 1_n=x,\\
    \end{align*}

    Therefore, there is a canonical $\{+,\times\}$-algebra morphism
    \begin{align*}
        p_n: A_n &\to \mathbb{Z}_{\geqslant0}[a_{1,n},\cdots,a_{n,n}]\\
        a_{k,n} &\mapsto a_{k,n},\\
        1_n&\mapsto 1_n,\\
        0_n&\mapsto 0_n.
    \end{align*}
    Moreover, for any $\phi:\mathbf{m}_e\to \mathbf{n}_e$,
    there is a induced $\{+,\times\}$-algebra morphism
    \begin{align*}
        \phi_*: A_m &\to A_n\\
        a_{k,m} &\mapsto a_{\phi(k),n}
    \end{align*}
    with $a_{0,n}=0$ and $a_{e,n}=1$ 
    such that the following diagram commutes:
    \begin{center}
        \begin{codi}
            \obj{
                |(0)|A_m
                &[5em]|(1)|A_n\\
                |(2)|\mathbb{Z}_{\geqslant 0}[a_{1,m},\cdots,a_{m,m}]
                &|(3)|\mathbb{Z}_{\geqslant 0}[a_{1,n},\cdots,a_{n,n}]\\
                };
            \mor 0 "\phi_*":-> 1 "p_n":-> 3;
            \mor 0 "p_m":-> 2 "\phi_*":-> 3;
        \end{codi}
    \end{center}

    Less formally, the ring operad $\mathscr{S}_{set}$ is defined as 
    the preimage of $\coprod \mathcal{R}(n)$ under $\coprod p_n$.
    \begin{defn}
        We define $\mathscr{S}_{set}$ as follows:

        (1) For any $f\in \mathcal{R}(n)$,
        $$\mathscr{S}_{set}(f):=p_n^{-1}(f);$$

        (2) For any $(f_m,\phi,f_n)\in \widehat{\mathcal{R}}(f_m,f_n)$,
        $\phi_*: \mathscr{S}_{set}(f_m)\to \mathscr{S}_{set}(f_n)$ is the restriction
        of $\phi_*$ on $\mathscr{S}_{set}(f_m)$;

        (3) The unit element is $a_{1,1}\in \mathscr{S}_{set}(a_{1,1})=p_1^{-1}(a_{1,1})$;

        (4) The composition map 
        $$\gamma:\mathscr{S}_{set}(f)\times\mathscr{S}_{set}(g_1)\times\cdots\times \mathscr{S}_{set}(g_k)
        \to \mathscr{S}_{set}(f(g_1,\cdots,g_k)) $$
        is induced by the composition of elements in $\coprod A_n$.
    \end{defn}

    It's easy to check $\mathscr{S}_{set}$ above is a well-defined ring operad,
    and we let $\mathscr{S}$ to be the ring operad in $(\mathbf{Cat},\times,*)$
    defined by $E\mathscr{S}_{set}$.

    \begin{thm}\label{thmsymbi}
        The algebras over $\mathscr{S}$ are precisely tight symmetric bimonoidal 
        categories (defined in 
        \cite[Volume I, Definition 2.1.2]{johnson2021bimonoidal}) with strict 
        zero and unit objects.
    \end{thm}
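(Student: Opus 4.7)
The plan is to unpack the definition of an $\mathscr{S}$-algebra and observe that because $\mathscr{S}(f)=E\mathscr{S}_{set}(f)$ is indiscrete, the data $\theta:\mathscr{S}(f)\times X^{n}\to X$ consists of one evaluation functor $\theta_\omega:X^n\to X$ for every formal expression $\omega\in A_n$ with $p_n(\omega)=f$, together with a unique natural isomorphism $\theta_\omega\cong\theta_{\omega'}$ for each pair $\omega,\omega'$ in the same fibre. The structure maps $\iota:\ast\to X$ and $\epsilon:\ast\to X$ from $X\in\mathbf{Cat}_e$ supply the strict zero and unit objects. The defining relations $0+x=x$, $0\times x=0$, $1\times x=x$ in $A_n$ match the strictness conditions on $0$ and $1$, which is why we need the ``strict zero and unit'' hypothesis.

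From an $\mathscr{S}$-algebra I would extract the bimonoidal data as follows. Take the canonical elements $a_{1,2}+a_{2,2}\in\mathscr{S}_{set}(a_{1,2}+a_{2,2})$ and $a_{1,2}\cdot a_{2,2}\in\mathscr{S}_{set}(a_{1,2}a_{2,2})$; the associated evaluation functors give the two binary operations $+$ and $\times$. The symmetry, associativity, and left/right distributivity isomorphisms are produced by the unique morphisms in $\mathscr{S}(f)$ between pairs of formal expressions such as $a_{1,2}+a_{2,2}$ and $a_{2,2}+a_{1,2}$ (which lie in the same $\mathscr{S}_{set}(a_{1,2}+a_{2,2})$), or $(a_{1,3}+a_{2,3})\cdot a_{3,3}$ and $a_{1,3}\cdot a_{3,3}+a_{2,3}\cdot a_{3,3}$ (both in $\mathscr{S}_{set}(a_{1,3}a_{3,3}+a_{2,3}a_{3,3})$). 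The equivariance axioms for $\mathscr{S}$-algebras imply naturality of these isomorphisms, while the unit and associativity axioms, together with the fact that \emph{any} diagram in an indiscrete category commutes, force all of Laplaza's coherence axioms (as listed in \cite[Volume I, Definition 2.1.2]{johnson2021bimonoidal}) to hold. Tightness is automatic since the distributivity morphisms are built from isomorphisms in $E\mathscr{S}_{set}(f)$.

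Conversely, given a tight symmetric bimonoidal category $(X,+,\times,0,1,\ldots)$ with strict zero and unit, I would build the action by sending each $\omega\in\mathscr{S}_{set}(f)\subset A_n$ to the evaluation functor $\theta_\omega:X^n\to X$ obtained by interpreting the formal symbols $+,\times,0,1,a_{i,n}$ as the corresponding operations and projections in $X$. For a morphism $\omega\to\omega'$ in the indiscrete category $E\mathscr{S}_{set}(f)$, the required natural isomorphism $\theta_\omega\Rightarrow\theta_{\omega'}$ is furnished by Laplaza's coherence theorem, which guarantees a unique isomorphism between any two formal expressions evaluating to the same polynomial, built from the structure maps. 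The coherence theorem also gives the commutativity of the associativity, unit, and equivariance diagrams of Definition \ref{Calgebra}, since any two parallel composites of coherent isomorphisms must agree. Functoriality in $\widehat{\mathcal{R}}$ follows because $\phi_\ast:A_m\to A_n$ is a $\{+,\times\}$-algebra homomorphism and hence compatible with interpretation in $X$.

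The main obstacle is the reverse direction: one must show that Laplaza's coherence, as formulated in \cite{johnson2021bimonoidal}, produces exactly the natural isomorphisms demanded by $\mathscr{S}(f)$, and that the uniqueness is strong enough to verify the associativity and equivariance hexagons simultaneously for all $f\in\mathcal{R}(n)$. The delicate point is that $A_n$ has been quotiented by the strict unit and zero relations, so one has to confirm that the absent ``annihilator'' and ``left/right unit for $\times$'' coherence cells in Laplaza's list cause no obstruction; this is where the strict zero and unit hypothesis is indispensable. Once this matching is established, the two constructions are mutually inverse by inspection, yielding the claimed equivalence of categories.
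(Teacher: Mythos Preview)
Your forward direction (extracting a tight symmetric bimonoidal structure from an $\mathscr{S}$-algebra) is correct and matches the paper: indiscreteness of $\mathscr{S}(f)$ forces every coherence diagram to commute.

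The converse has a genuine gap. You invoke Laplaza's coherence theorem as ``guarantee[ing] a unique isomorphism between any two formal expressions evaluating to the same polynomial,'' but that is stronger than what the theorem in \cite[Vol.~I, Theorem~3.9.1]{johnson2021bimonoidal} actually says. Laplaza's result asserts only that any two parallel composites of the \emph{forward} generating morphisms $\alpha,\alpha',\gamma,\gamma',\lambda,\rho,\delta,\delta^{\#}$ (and inverses of the invertible ones, but \emph{not} $\delta^{-1}$ or $(\delta^{\#})^{-1}$) between regular words coincide in any symmetric bimonoidal category. It neither asserts that such a composite \emph{exists} between an arbitrary pair $\omega,\omega'\in p_n^{-1}(f)$, nor does it directly control zig-zags that use $\delta^{-1}$. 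So as stated you have neither existence nor uniqueness of the isomorphism $\theta_\omega\Rightarrow\theta_{\omega'}$ you need.

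The paper closes this gap as follows. First it notes that $p_n(\omega)\in\mathcal{R}(n)$ is exactly Laplaza's regularity condition, so the coherence theorem applies. Letting $\mathscr{S}'(f)\subset\mathscr{S}(f)$ be the image of the free category on the forward generators, Laplaza gives a well-defined functor $\mathscr{S}'(f)\to\mathrm{Func}(C^n,C)$; tightness ensures every morphism lands on an isomorphism, so this extends to the free groupoid of $\mathscr{S}'(f)$. The missing step is to identify that free groupoid with the indiscrete $\mathscr{S}(f)$. The paper does this by exhibiting an explicit \emph{terminal object} in $\mathscr{S}'(f)$ (a canonical left-associated sum-of-products normal form), whence $B\mathscr{S}'(f)$ is contractible and its fundamental groupoid is indiscrete. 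You correctly flag the converse as ``the main obstacle,'' but without this normal-form/terminal-object argument the obstacle is not overcome.
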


    \begin{proof}

        Note that the polynomial rig over non-negative integers 
        $\mathbb{Z}_{\geqslant 0}[a_{1,n},\cdots,a_{n,n}]$
        is by definition the quotient of $A_n$
        by relation given by associativity, commutativity, distributivity, and unit laws.
        So morphisms in $\mathscr{S}$
        are generated by morphisms of one of the following forms
        and their inverses
        \begin{align*}
            \alpha_{A,B,C} &: A\times(B\times C)\to (A\times B)\times C,\\
            \alpha'_{A,B,C} &: A+(B+ C)\to (A+B)+ C,\\
            \gamma_{A,B} &: A\times B\to B\times A,\\
            \gamma'_{A,B} &: A+ B\to B+ A,\\
            \lambda_{A,n}&: 1_n\times A\to A,\\
            \rho_{A,n}&: A\times 1_n\to A,\\
            \delta_{A,B,C}&: A\times(B+C)\to A\times B+A\times C,\\
            \delta^{\#}_{A,B,C}&: (A+B)\times C\to A\times C+B\times C.
        \end{align*}

        Therefore, if $C$ is a $\mathscr{S}$-algebra with structure map
        $\lambda: \mathscr{S}(f)\to \mathrm{Func}(C^n,C)$,
        then 
        $$(C,\gamma(a_{1,2}+a_{2,2}),\gamma(a_{1,2}+a_{2,2}))$$
        gives a tight symmetric bimonoidal 
        category (with strict zero object) structure on $C$.
        Here all the coherence diagrams commute since there is precisely one morphism
        in each hom-set in $\mathscr{S}$.

        Conversely, let $\tilde{\mathscr{S}}(f)$ be the free category generated by morphisms
        of one of the forms in the above list and their inverse 
        except for the inverses of $\delta_{A,B,C}$
        and $\delta^{\#}_{A,B,C}$. 
        If $(C,\oplus,\otimes,0,1)$ is a tight symmetric bimonoidal 
        category with strict zero object, then we can define a functor
        $\coprod_{|f|=n}\tilde{\mathscr{S}}(f)\to \mathrm{Func}(C^{n}, C)$ sending
        \begin{align*}
            a_{i,n}&\mapsto \{(x_1,\cdots,x_n)\mapsto x_i\} \\
            \alpha+\beta &\mapsto \{ (x_1,\cdots,x_n)\mapsto \alpha(x_1,\cdots,x_n)\oplus\beta(x_1,\cdots,x_n) \}\\
            \alpha\times\beta &\mapsto \{ (x_1,\cdots,x_n)\mapsto \alpha(x_1,\cdots,x_n)\otimes\beta(x_1,\cdots,x_n) \}\\
            0_n&\mapsto \{(x_1,\cdots,x_n)\mapsto 0\}\\
            1_n&\mapsto \{(x_1,\cdots,x_n)\mapsto 1\}
        \end{align*}
        and sending the morphisms listed above to the structure maps of 
        $\mathrm{Func}(C^{n}, C)$. Here the 
        tight symmetric bimonoidal of $\mathrm{Func}(C^{n}, C)$
        is induced by that of $C$.

        Comparing our Notation \ref{rn} and 
        \cite[Volume I, Definition 3.1.25]{johnson2021bimonoidal},
        we get an element $x\in A_n$ is regular in the sense of \cite{johnson2021bimonoidal}
        if and only if $p_n(x)\in\mathcal{R}(n)$.

        Then by the coherence theorem 
        \cite[Volume I, Theorem 3.9.1]{johnson2021bimonoidal},
        the above functor 
        $\tilde{\mathscr{S}}(f)\to \mathrm{Func}(C^{n}, C)$
        factors through the image of
        $\tilde{\mathscr{S}}(f)$
        in $\mathscr{S}(f)$, denoted by $\mathscr{S}'(f)$. 
        
        To define an $\mathscr{S}$ action on $C$, it suffices to extend the 
        above functor defined on $\mathscr{S}'(f)$ to $\mathscr{S}(f)$.
        Note that any morphism in $\mathscr{S}'(f)$ is sent to 
        an isomorphism in $\mathrm{Func}(C^{n}, C)$, so we only need 
        to show that $\mathscr{S}(f)$ is the free groupoid generated by 
        $\mathscr{S}'(f)$.

        Note that each hom-set in $\mathscr{S}'(f)$ contains exactly zero 
        or one element, so $\mathscr{S}'(f)$ is equivalent to some poset.
        Also, by \cite[Proposition 1]{jardine2020persistenthomotopytheory},
        the free groupoid of some poset $P$ is equivalent to the fundamental
        group of the classifying space $BP$. Therefore, it remains to show
        that $B\mathscr{S}'(f)$ is contractible.

        Indeed, $B\mathscr{S}'(f)$ is contractible because 
        $\mathscr{S}'(f)$ has a terminal object. 
        For 
        $$f=\sum_{k=1}^{l}a_{i_{1,k},n}a_{i_{2,k},n}\cdots a_{i_{j_k,k},n},$$
        the following element
        $$\biggl(\biggl(((a_{i_{1,1},n}a_{i_{2,1},n})\cdots a_{i_{j_1,1},n})+
        ((a_{i_{1,2},n}a_{i_{2,2},n})\cdots a_{i_{j_2,2},n})\biggr)+\cdots+
        ((a_{i_{1,l},n}a_{i_{2,l},n})\cdots a_{i_{j_l,l},n})\biggr)$$
        is terminal in $\mathscr{S}'(f)$, so the theorem holds.
    \end{proof}

    Note that the classifying space functor $B$ is product preserving, so the classifying
    space of $\mathscr{S}$ gives a ring operad in $\mathscr{U}$.

    \begin{prop}\label{propsymbiEinfty}
        $B\mathscr{S}$ is an $E_\infty$ ring operad.
    \end{prop}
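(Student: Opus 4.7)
The plan is to verify each of conditions (1)--(5) of Definition \ref{defnEinfty} separately, using Proposition \ref{propofE} throughout to pass from set-level facts about $\mathscr{S}_{set}$ to topological facts about $B\mathscr{S}=BE\mathscr{S}_{set}$, and using that $B=|N(-)|$ preserves finite products so the ring operad structure transfers. Condition (1) is immediate from Proposition \ref{propofE}(4), which says that $BE(X)$ is contractible for every set $X$.

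For conditions (2) and (5), the strategy is to establish the corresponding set-level statement and then invoke Proposition \ref{propofE}. An injective morphism $\phi:\mathbf{m}_e\to\mathbf{n}_e$ in $\mathbf{Set}_e$ is automatically effective, since $\phi(0)=0$ and $\phi(e)=e$ are already forced and injectivity prevents any other element from landing on $0$ or $e$; hence $\phi$ merely relabels free variables. The first task is then to check that $\phi_*:\mathscr{S}_{set}(f_m)\to\mathscr{S}_{set}(f_n)$ is a bijection of sets---injectivity is clear, and surjectivity follows because any $y\in A_n$ with $p_n(y)=\phi_*f_m$ can only use variables in the image of $\phi$, as the defining relations of $A_n$ cannot introduce a new variable into the polynomial $p_n(y)$. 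Once this bijection is established, $E\phi_*$ is an isomorphism of categories and $BE\phi_*$ is a homeomorphism, giving (2). A similar argument shows $\phi_*$ is injective for every morphism in $\widehat{\mathcal{R}}_{n.d.}$, after which Proposition \ref{propofE}(5) delivers condition (5).

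Conditions (3) and (4) reduce to set-level statements about $\mathscr{S}_{set}$, because a point $\alpha\in BE(X)$ is represented by a pair $(c,(x_0,\ldots,x_k))$ with $c\in\Delta^k$ and $x_i\in X$, and $BE(f)$ simply applies $f$ to each vertex; thus an identity $BE(\phi_1)(\alpha)=BE(\phi_2)(\alpha)$ forces vertex-by-vertex identities on the underlying sets. For (4), given effective $(f,\phi_1,g),(f,\phi_2,g)$ with $f$ non-degenerate and a vertex $x\in\mathscr{S}_{set}(f)\subset A_{|f|}$, non-degeneracy of $f$ combined with $p_{|f|}(x)=f$ forces every variable $a_{k,|f|}$ to appear at some leaf of the tree-expression $x$, so $\phi_i(k)$ is read off from $\phi_{i*}x$ at that leaf, forcing $\phi_1=\phi_2$. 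For (3), I would construct a common source $h$ by pulling back $\phi_1,\phi_2$ on index sets and equipping the pullback variables with a suitable polynomial, then build $\beta$ on each simplex vertex by ``unifying'' the tree-expressions of $x_{1,i},x_{2,i}$ along their common image in $\mathscr{S}_{set}(g)$, and verify that these unified expressions assemble into a well-defined simplex in $BE(\mathscr{S}_{set}(h))$.

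The main obstacle will be condition (3): finding a single non-degenerate $h$ with effective $\psi_1,\psi_2$ that works simultaneously for every vertex of the simplex carrying $\alpha_1,\alpha_2$, and explicitly constructing $\beta$ as a tree-expression in $A_{|h|}$ compatible with the $0,1$-relations. The surjectivity claim in (2), the injectivity claim in (5), and the vertex-level rigidity in (4) are subsidiary combinatorial facts about the interaction between the free $\{+,\times\}$-algebras $A_n$, the quotient map $p_n$, and the tree structure of elements; once they are in hand, Proposition \ref{propofE} promotes everything to the required topological statements.
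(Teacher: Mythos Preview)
Your overall strategy---reduce each condition to a set-level fact about $\mathscr{S}_{set}$ and then invoke Proposition \ref{propofE}---is exactly what the paper does. Where you diverge is in assessing which condition carries the weight.

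You treat condition (5) as a throwaway (``a similar argument shows $\phi_*$ is injective for every morphism in $\widehat{\mathcal{R}}_{n.d.}$''), but this is where the paper spends the most effort, and the argument is genuinely different from (2). In (2) the map $\phi:\mathbf{m}_e\to\mathbf{n}_e$ is injective on indices, so $\phi_*:A_m\to A_n$ is visibly injective on expressions. In (5), by contrast, $(f,\phi,g)$ lies in $\widehat{\mathcal{R}}_{n.d.}$, so both $f$ and $g$ are non-degenerate and Lemma \ref{lemofnondeg} forces $\phi$ to be \emph{surjective} on indices---typically not injective at all. Two distinct variable sequences $(i_1,\ldots,i_l)$ and $(i'_1,\ldots,i'_l)$ in $\mathscr{S}_{set}(f)$ can certainly satisfy $\phi(i_k)=\phi(i'_k)$ for every $k$; what prevents this is the combined constraint that both sequences must have image $f$ under $p_{|f|}$ \emph{and} that $\phi_* f=g$ lies in $\mathcal{R}(|g|)$ (all coefficients $0$ or $1$). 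The paper introduces the notion of a \emph{fundamental sequence} of an expression $\alpha$---a subsequence of positions whose variables multiply to a monomial summand of $p(\tilde\alpha)$---and argues that (i) the fundamental sequences of $\alpha_1$ and $\alpha_2$ must coincide (else $g$ would acquire a coefficient $\ge 2$), and (ii) within each fundamental sequence the restriction of $\phi$ is injective (else $g$ would acquire a square), so $i_k=i'_k$ position by position. This is the combinatorial heart of the proposition, and your proposal does not supply it.

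Conversely, you flag condition (3) as the main obstacle, worrying about finding a single $h$ that works for every vertex of the carrying simplex. The paper handles (3) rather cleanly: form the pullback of $\phi_1,\phi_2$ in based finite sets to obtain $\psi_1,\psi_2$ depending only on $\phi_1,\phi_2$, and then for each matched pair $(x_{1},x_{2})$ read off $\beta$ as the expression with the same shape and variable sequence $((i_k,i'_k))_k$ in the pullback. The pullback projections $\psi_1,\psi_2$ are fixed once and for all, which addresses most of your uniformity concern; the paper then asserts that the set-level statement lifts through the free-simplicial-set functor and geometric realization. So while your instinct that (3) needs care about vertices is reasonable, the paper's construction makes the lift more routine than you anticipate, and in any case the real missing piece in your plan is the injectivity argument for (5).
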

    \begin{proof}
        First, all $B\mathscr{S}(f)$ are contractible by Proposition \ref{propofE}.

        To check conditions (2), (3) and (4), 
        note that if they hold for $\mathscr{S}_{set}$, 
        they also hold after taking the free simplicial sets generated by
        $\mathscr{S}_{set}$, and therefore hold after taking geometric realization.
        So it suffices to check them 
        on $\mathscr{S}_{set}$. 

        By Proposition \ref{propofE}, to check condition (5), it suffices
        to check all $\phi_*$ are injective on $\mathscr{S}_{set}$.

        Note that an element in $A_n$ is one-to-one corresponding to 
        a sequence with length $l$ of variables in $\{a_{1,n},\cdots,a_{n,n},1_n\}$,
        a sequence with length $l-1$ of operators $\{+,\times\}$,
        together with a parenthesization such that 
        this expression is not of the form 
        $$\cdots (1_n \times (\alpha))\cdots$$
        or 
        $$\cdots ((\alpha)\times 1_n )\cdots.$$
        Here $0_n$ does not appear because it has been cancelled by 
        strict nullity.

        Moreover, if $\phi_*a=b$ for two elements $a,b$ in $\coprod A_n$
        with $\phi$ effective,
        then $a$ and $b$ share the same sequence of operators and the same
        parenthesization. Only the sequences of variables
        are changed by $\phi$, but the length of the sequences of variables
        and the $1$'s in these sequences are also the same.

        With this prospective, we introduce the following notation.

        \begin{notn}
            For any element $\alpha\in A_n$,
            let 
            $$(a_{i_1,n},a_{i_2,n},\cdots,a_{i_l,n})$$
            be the sequence of variables associated to $\alpha$.
            Here $i_j\in\{e,1,2,\cdots,n\}$ for $j=1,2,\cdots,l$ 
            and $a_{e,n}=1_n$. We denote
            \begin{align*}
                \chi(j)=\begin{cases}
                    e &\text{ if } i_j=e,\\
                    j &\text{ if } i_j\neq e,\\
                \end{cases}
            \end{align*}
            for $j=1,2,\cdots,l$.

            Let $\tilde{\alpha}$ be the element in $A_l$
            with the same sequence of operators
            and parenthesization as $\alpha$ and with 
            the corresponding
            sequence of variables
            $$(a_{\chi(1),l},a_{\chi(2),l},\cdots,a_{\chi(l),l}).$$

            We say $1\leqslant k_1<k_2<\cdots<k_r\leqslant l$
            is a fundamental sequence of $\alpha$ if and only if:

            (1) $i_{k_j}\neq e$, i.e. $\chi(k_j)=k_j$, for $j=1,2,\cdots,r$;

            (2) in the monomial decomposition of $p_l(\tilde{\alpha})\in
            \mathbb{Z}_{\geqslant0}[a_{1,l},\cdots,a_{l,l}]$,
            the coefficient of the monomial $a_{k_1,l}a_{k_2,l}\cdots a_{k_r,l}$
            is positive.
        \end{notn}

        Now we return to the proof of Proposition \ref{propsymbiEinfty}. 
        Note that $A_n$ is only a $\{+,\times\}$-algebra and there 
        is no minus operations. So if $a_{m,n}$ does not appear in some 
        $f\in\mathcal{R}(n)$, $a_{m,n}$ cannot appear in the expression 
        of any element in $\mathscr{S}_{set}(f)$. Therefore, condition (2) holds.

        For condition (3), let $f_1,f_2$ be non-degenerate objects
        and 
        $$\alpha_1\in\mathscr{S}_{set}(f_1),\alpha_2\in\mathscr{S}_{set}(f_2).$$
        Suppose there exists some non-degenerate $g$ with effective morphisms
        $(f_1,\phi_1,g), (f_2,\phi_2,g)$ in $\widehat{\mathcal{R}}_{eff}$
        such that $$\phi_{1*}\alpha_1=\phi_{2*}\alpha_2\in\mathscr{S}_{set}(g).$$

        With the argument above, we regard $\alpha_1,\alpha_2$
        as sequences $(a_{i_1,|f_1|},\cdots,a_{i_l,|f_1|})$,
        $(a_{i'_1,|f_2|},\cdots,a_{i'_l,|f_l|})$, respectively.

        Therefore, $\phi_{1*}\alpha_1=\phi_{2*}\alpha_2$
        implies $\phi_{1}(i_k)=\phi_{2}(i'_k)$ for $k=1,2,\cdots, l$.

        Consider the following pull-back diagram in the category of 
        based finite sets with based point $e$.
        \begin{center}
            \begin{codi}
                \obj{
                    |(0)|\{e,1,\cdots, m\}
                    &[5em]|(1)|\{e,1,\cdots, |f_2|\}\\
                    |(2)|\{e,1,\cdots, |f_1|\}
                    &|(3)|\{e,1,\cdots, |g|\}\\
                    };
                \mor 0 "\psi_2":-> 1 "\phi_2":-> 3;
                \mor 0 "\psi_1":-> 2 "\phi_1":-> 3;
            \end{codi}
        \end{center}
        Let $\beta$ the element in $A_m$ corresponding to the sequence
        $$(a_{j_1,m},\cdots,a_{j_l,m})$$
        where $j_k$ is the unique element in $\{e,1,\cdots, m\}$
        such that $(\psi_1(j_k),\psi_2(j_k))=(i_k,i'_k)$.
        Then we get 
        \begin{align*}
            \psi_{1*}\beta&=\alpha_1,\\
            \psi_{2*}\beta&=\alpha_2.
        \end{align*}
        Here the projection $p_m(\beta)\in\mathbb{Z}_{\geqslant0}[a_{1,m},\cdots,a_{m,m}]$
        must be contained in $\mathcal{R}(m)$ since otherwise 
        $f_1=\psi_{1*}p_m(\beta)=p_m\psi_{1*}(\beta)$ is not contained
        in $\mathcal{R}(|f_1|)$. So condition (3) holds.
        
        For condition (4), let 
        $$(f,\phi_1,g),(f,\phi_2,g)$$
        be effective morphisms with $f$ non-degenerate, and 
        $\alpha\in\mathscr{C}(f)$ such that 
        $\phi_{1*}\alpha=\phi_{2*}\alpha$.
        We also regard $\alpha$ as a sequence of variables
        $$(a_{i_1,|f|},\cdots,a_{i_l,|f|}).$$
        Now $\phi_{1*}\alpha=\phi_{2*}\alpha$ implies 
        $\phi_{1}(i_k)=\phi_{2}(i_k)$, and $f$ non-degenerate
        implies $$\{e\}\cup\{i_1,\cdots,i_l\}=\{e,1,2,\cdots,|f|\},$$
        so condition (4) holds.
        
        For condition (5), let $(f,\phi,g)$ be a morphism
        in $\widehat{\mathcal{R}}_{n.d.}$.
        If there exists $\alpha_1,\alpha_2$ in $\mathscr{P}_s(f)$
        such that $\phi_*\alpha_1=\phi_*\alpha_2$,
        then we denote the corresponding
        sequences of operators with $\alpha_1,\alpha_2$ by 
        $(a_{i_1,|f|},\cdots,a_{i_l,|f|})$,
        $(a_{i'_1,|f|},\cdots,a_{i'_l,|f|})$, respectively.
        $\phi_*\alpha_1=\phi_*\alpha_2$ implies
        $\phi_*(i_k)=\phi_*(i'_k)$.

        We also have the projections $p_{|f|}(\alpha_1)=p_{|f|}(\alpha_2)=f$.
        Therefore, for a monomial 
        $a_{s_1,|f|}\cdots a_{s_r,|f|}$ which has coefficient $1$ 
        in the monomial decomposition of $f$, there must exist
        a fundamental sequence 
        $\{k_1<\cdots< k_r\}$ of $\alpha_1$ such that $(i_{k_1},\cdots,i_{k_r})$ is a permutation
        of $(s_1,\cdots,s_r)$. Moreover, 
        $\phi_{*}f=g\in\mathcal{R}(|g|)$ implies 
        $\phi(s_1),\cdots,\phi(s_r)$ are distinct.

        Similarly, there is a fundamental sequence $\{k'_1<\cdots< k'_r\}$
        of $\alpha_2$
        such that $(i'_{k'_1},\cdots,i'_{k'_r})$ is a permutation
        of $(s_1,\cdots,s_r)$. If $\{k'_1<\cdots< k'_r\}\neq \{k_1<\cdots< k_r\}$,
        then applying $p_{|g|}\phi_*\alpha_1=p_{|g|}\phi_*\alpha_2$, we get
        two same monomials in the summation of $g$, 
        and thus the coefficient of monomial 
        $a_{\phi(s_1),|g|}a_{\phi(s_2),|g|}\cdots a_{\phi(s_r),|g|}$
        is at least 2, which gives a contradiction.

        Therefore, $\{k'_1<\cdots< k'_r\}=\{k_1<\cdots< k_r\}$.
        Also, since $\phi(s_1),\cdots,\phi(s_r)$ are distinct,
        $\phi(i_{k_t})=\phi(i'_{k_t})\in \{\phi(s_1),\cdots,\phi(s_r)\}$
        together with $i_{k_t},i'_{k_t}\in\{s_1,\cdots,s_r\}$
        implies $i_{k_t}=i'_{k_t}$ for $t=1,2,\cdots,r$.

        Moreover, each $k=1,2,\cdots, l$ is either contained 
        in some fundamental sequence of $\alpha_1$
        or $i_k=i'_k=e$, so $i_{k}=i'_{k}$
        holds for all $k$. Therefore, $\alpha_1=\alpha_2$,
        so condition (5) holds.
    \end{proof}

    Therefore, applying the group completion theorem \ref{thmgpcom},
    we get an alternative proof of the following result.

    \begin{thm}\label{thmsynbigpcom}
        Let $C$ be a tight symmetric bimonoidal 
        category (with strict zero and unit object) and let $0$ 
        be the based-point of the classifying space $BC$.
        Then there is a group completion
        \begin{align*}
            BC\to \Omega^\infty \mathbb{E}(BC)
        \end{align*}
        where $\mathbb{E}(BC)$ is an $E_\infty$ ring spectrum  
        depending functorially on $C$.
    \end{thm}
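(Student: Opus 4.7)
The plan is to assemble the three results just established in this section and feed them directly into the group completion machinery from Section \ref{Sectionoperadpair}. First, by Theorem \ref{thmsymbi}, the tight symmetric bimonoidal structure on $C$ (with strict zero and unit objects) is \emph{exactly} the data of an $\mathscr{S}$-algebra structure on $C$ in $(\mathbf{Cat}, \times, *)$. Second, since the classifying space functor $B = |N(-)|$ preserves finite products, it carries ring operads and their algebras in $\mathbf{Cat}$ to ring operads and algebras in $\mathscr{U}$; in particular $BC$ becomes a $B\mathscr{S}$-algebra. Third, Proposition \ref{propsymbiEinfty} guarantees that $B\mathscr{S}$ is an $E_\infty$ ring operad in $\mathscr{U}$.

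Next, I need to place $BC$ inside $\mathscr{U}_e$ so that Theorem \ref{thmgpcom} applies. The strict zero object and strict unit object of $C$ give two distinguished $0$-cells in $NC$, hence two distinguished points of $BC$, which serve as the structure maps $\iota: \kappa_0 \to BC$ and $\epsilon: \kappa_e \to BC$. One checks (formally from the definition of $\mathscr{S}_{set}$ and the role of the $0_n, 1_n$ symbols in Section \ref{Sectionsymbimonoidal}) that the $B\mathscr{S}$-action on $BC$ is compatible with this $\mathscr{U}_e$-structure, so $BC$ is genuinely an object of $B\mathscr{S}[\mathscr{U}_e]$.

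Finally, I apply Theorem \ref{thmgpcom} with the $E_\infty$ ring operad $\mathscr{C} := B\mathscr{S}$ and $X := BC$. This directly produces an $E_\infty$ ring spectrum
\[
\mathbb{E}(BC) := B(\Sigma^\infty, \mathbb{K}, B(\mathbb{R}, \mathbb{D}, BC))
\]
together with a group completion $BC \to \Omega^\infty \mathbb{E}(BC)$. Functoriality in $C$ is automatic: the passage $C \mapsto BC$ is functorial on tight symmetric bimonoidal categories with strict zero and unit, the $B\mathscr{S}$-algebra structure is natural in $\mathscr{S}$-algebra maps (hence in strict bimonoidal functors), and the bar construction $\mathbb{E}$ of Theorem \ref{thmgpcom} is manifestly functorial in its input $\mathscr{C}$-algebra.

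There is no real obstacle in this argument, as all the heavy lifting has already been done; the only mild point to be careful about is the basepoint convention (identifying the basepoint $0$ appearing in the theorem statement with the $\kappa_0$-component of the $\mathscr{U}_e$-structure) and the observation that $B$ preserves enough structure to transport the ring operad action across. Both are routine once Theorem \ref{thmsymbi} and Proposition \ref{propsymbiEinfty} are in hand.
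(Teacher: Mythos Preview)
Your proposal is correct and matches the paper's approach exactly: the paper does not even give a separate proof of this theorem, merely stating that it follows by applying Theorem~\ref{thmgpcom} once Theorem~\ref{thmsymbi} and Proposition~\ref{propsymbiEinfty} are in hand. Your additional remarks about the $\mathscr{U}_e$-structure and functoriality are reasonable elaborations of points the paper leaves implicit.
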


    In particular, if we begin with  a 
    skeleton of the category of free
    $R$-modules with operators $\{\oplus,\otimes\}$ over some
    commutative ring $R$,
    then we get an alternative approach to construct the algebraic $K$-theory 
    ring spectrum.

    \subsection{From symmetric bimonoidal categories to bipermutative categories}\label{Sectionbipermutative}
    
    The classical construction of algebraic $K$-theory 
    ring spectrum \cite{TheconstructionofE_infringspacesfrombipermutativecategories}
    only concerns bipermutative categories, while we focus on general
    symmetric bimonoidal categories since the category of projective
    $R$-modules is only symmetric bimonoidal. The classical
    construction works due to a strictification functor
    sending each symmetric bimonoidal category
    to some bipermutative category with equivalent classifying
    space, see \cite[Section VI.3]{may1977ring}.

    In this section, we modify the construction in Section 
    \ref{Sectionsymbimonoidal} to get a ring operad for 
    bipermutative categories and define an operadic strictification
    functor. Then we use this to description a comparison 
    between the the two construction from bipermutative categories
    to classical $E_\infty$ ring spaces.

    Recall that a bipermutative category is a tight symmetric bimonoidal 
    category with strict unit and associativity for both 
    addition and multiplication and also a strict right distributivity.
    Hence, we define $A'_n$ to be the $\{+,\times\}$-algebra
    generated by $\{a_{1,n},\cdots,a_{n,n},1_n,0_n\}$ with relations 
    \begin{align*}
        0_n+x&=x+0_n=x,\\
        0_n\times x&=x\times 0_n=0_n,\\
        1_n\times x&=x\times 1_n=x,\\
        (x+y)+z&=x+(y+z),\\
        (x\times y)\times z&=x\times (y\times z),\\
        (x+y)\times z&=x\times z+y\times z.
    \end{align*}
    Therefore, the projection map $p_n: A_n\to \mathbb{Z}[a_{1,n},\cdots,a_{n,n}]$
    factor through $A'_n$ uniquely
    \begin{center}
        \begin{codi}
            \obj{
                |(0)|A_n
                &[5em] |(1)|\mathbb{Z}[a_{1,n},\cdots,a_{n,n}]\\
                |(2)|A'_n
                &\\
                };
            \mor 0 "p_n":-> 1;
            \mor 0 "\nu":-> 2 "p'_n":-> 1;
        \end{codi}
    \end{center}

    In analogy with Section \ref{Sectionsymbimonoidal},
    we define the ring operad $\mathscr{P}_{set}$ as 
    the preimage of $\coprod \mathcal{R}(n)$ under $\coprod p'_n$.
    \begin{defn}
        We define $\mathscr{P}_{set}$ as follows:

        (1) For any $f\in \mathcal{R}(n)$,
        $$\mathscr{P}_{set}(f):={p'}_n^{-1}(f);$$

        (2) For any $(f_m,\phi,f_n)\in \widehat{\mathcal{R}}(f_m,f_n)$,
        $\phi_*: \mathscr{P}_{set}(f_m)\to \mathscr{P}_{set}(f_n)$ is the restriction
        of $\phi_*$ on $\mathscr{P}_{set}(f_m)$;

        (3) The unit element is $a_{1,1}\in \mathscr{P}_{set}(a_{1,1})={p'}_1^{-1}(a_{1,1})$;

        (4) The composition map 
        $$\gamma:\mathscr{P}_{set}(f)\times\mathscr{P}_{set}(g_1)\times\cdots\times \mathscr{P}_{set}(g_k)
        \to \mathscr{P}_{set}(f(g_1,\cdots,g_k)) $$
        is induced by the composition of elements in $\coprod A'_n$.
    \end{defn}

    Moreover, we let $\mathscr{P}$ be the ring operad in $(\mathbf{Cat},\times,*)$
    defined by $E\mathscr{P}_{set}$.

    With the same proof as Theorem \ref{thmsymbi} and Proposition
    \ref{propsymbiEinfty}, we get the following result.

    \begin{thm}\label{thmbiperm}
        The algebras over $\mathscr{P}$ are precisely bipermutative
        categories. Moreover, after applying the classifying space 
        functor, $B\mathscr{P}$ is an $E_\infty$ ring operad.
    \end{thm}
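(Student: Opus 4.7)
The plan is to mimic the proofs of Theorem \ref{thmsymbi} and Proposition \ref{propsymbiEinfty} essentially verbatim, with only bookkeeping changes to accommodate the additional relations defining $A'_n$ (strict associativity of both operations and strict right distributivity). Since $\mathscr{P}$ is again built by applying $E$ to a set-valued ring operad $\mathscr{P}_{set}$, the formal structure of the proof is the same; the only content lies in re-examining the combinatorics of $A'_n$.

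For the first assertion, I would begin by listing the generating morphisms of $\mathscr{P}(f)$: passing from $A_n$ to $A'_n$ identifies $\alpha$, $\alpha'$, and $\delta^{\#}$ with identities, so the surviving generators of $\mathscr{P}(f)$ (besides identities) are $\gamma$, $\gamma'$, $\lambda$, $\rho$, $\delta$, together with the inverses of $\gamma,\gamma',\lambda,\rho$. This is exactly the data required of a bipermutative category. One then defines the action $\coprod_{|f|=n}\tilde{\mathscr{P}}(f)\to\mathrm{Func}(C^{n},C)$ on a bipermutative $C$ exactly as in Theorem \ref{thmsymbi}, invokes the coherence theorem (which for bipermutative categories follows from \cite[Volume I, Theorem 3.9.1]{johnson2021bimonoidal}, since a bipermutative category is a special tight symmetric bimonoidal category), and observes that this factors through a subcategory $\mathscr{P}'(f)\subset\mathscr{P}(f)$. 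The unique normal form element (sum of monomials in the order prescribed by the lexicographic ordering of $\Lambda_f$, with each monomial bracketed from the left) is terminal in $\mathscr{P}'(f)$, so $B\mathscr{P}'(f)$ is contractible, and $\mathscr{P}(f)$ is its free groupoid. This extends the action to all of $\mathscr{P}(f)$ and gives the desired equivalence of categories.

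For the second assertion, I would verify the five conditions of Definition \ref{defnEinfty}. Contractibility of $B\mathscr{P}(f)$ follows from Proposition \ref{propofE}(4). As in the proof of Proposition \ref{propsymbiEinfty}, conditions (2), (3), (4) can be verified on $\mathscr{P}_{set}$ before applying $E$ and $B$, while condition (5) reduces by Proposition \ref{propofE}(5) to showing that $\phi_*$ is injective on $\mathscr{P}_{set}$. The key is a normal form for elements of $A'_n$: by strict associativity of $+$ and $\times$ and strict right distributivity, every element can uniquely be written as an ordered formal sum $(M_1,M_2,\ldots,M_s)$ where each $M_i$ is itself an ordered formal product $(a_{i_1,n},\ldots,a_{i_{t_i},n})$ of variables in $\{a_{1,n},\ldots,a_{n,n},1_n\}$ (with $0_n$ and extraneous unit factors cancelled, and with left-distributivity $\delta$ left intact). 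So an element of $\mathscr{P}_{set}(f)$ is a pair (ordered list of monomial summands, ordering of variables within each summand) whose underlying unordered monomial sum equals $f$. With this description, conditions (2)--(5) become direct analogs of those in Proposition \ref{propsymbiEinfty}, using the ``fundamental sequence'' and pullback-of-based-sets arguments unchanged.

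The main obstacle I anticipate is nailing down the normal form in $A'_n$ precisely, since strict right distributivity interacts asymmetrically with the still-non-trivial left distributivity $\delta$; one has to verify that the relations imposed in passing from $A_n$ to $A'_n$ do not accidentally identify two distinct ``sequence representatives'' of the form described above. Once this uniqueness is established, every subsequent step---contractibility of $\mathscr{P}'(f)$, the equivariance arguments for (3) and (4), and the fundamental-sequence argument for (5)---transfers essentially word-for-word from Proposition \ref{propsymbiEinfty}, and is in fact slightly easier because one no longer needs to keep track of a parenthesization.
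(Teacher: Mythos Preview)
Your overall strategy is correct and mirrors the paper's, but the normal form you propose for $A'_n$ is wrong, and this is not just the anticipated bookkeeping obstacle but an actual error. Since only \emph{right} distributivity is imposed strictly, the element $a_{1,n}\times(a_{2,n}+a_{3,n})$ is \emph{not} equal to $a_{1,n}a_{2,n}+a_{1,n}a_{3,n}$ in $A'_n$, and it is not an ordered sum of ordered monomials; more generally, any expression of the shape $a_{i_1}\times\bigl(a_{i_2}\times\cdots\times(a_{i_k}\times(\text{sum}))\bigr)$ is irreducible. Your parenthetical ``with left-distributivity $\delta$ left intact'' shows you sense the asymmetry, but it is inconsistent with the flat sum-of-monomials description you actually give. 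This matters because the fundamental-sequence and pullback-of-based-sets arguments you plan to reuse from Proposition~\ref{propsymbiEinfty} depend on a concrete representation of elements by a sequence of variables together with auxiliary data that $\phi_*$ leaves untouched; your description does not supply one.

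The paper resolves this not by describing $A'_n$ intrinsically but by constructing a \emph{section} $s:A'_n\to A_n$ of the quotient $\nu$. An element of $A_n$ is declared \emph{reduced} if every $\times$-subexpression $\alpha\times\beta$ has a single variable as its left factor $\alpha$ (and $\beta\notin\{0_n,1_n\}$), and every $+$-subexpression has nonzero summands; each class in $A'_n$ then contains a unique reduced representative, and $s$ picks it out. Via $s$, the ring operad $\mathscr{P}_{set}$ is identified with the sub-$\widehat{\mathcal{R}}$-functor of reduced objects inside $\mathscr{S}_{set}$. The point is that an effective $\phi_*$ acts on a reduced element only by relabelling its sequence of variables, leaving the sequence of operators and the parenthesization unchanged; hence the entire machinery of Proposition~\ref{propsymbiEinfty} (the sequence-of-variables description, the notion of fundamental sequence, and the pullback argument for condition~(3)) applies to this subset verbatim, and both parts of the theorem follow by citing Theorem~\ref{thmsymbi} and Proposition~\ref{propsymbiEinfty} rather than by redoing them. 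This is both shorter and avoids the trap your normal form falls into.
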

    \begin{proof}
        Note that $A'_n$ is a quotient of $A_n$ as a set. We define a 
        section map $s: A'_n\to A_n$ as follows.

        For each $a\in A_n$, we express $a$ as 
        a sequence with length $l$ of variables in $\{a_{1,n},\cdots,a_{n,n},1_n\}$,
        a sequence with length $l-1$ of operators $\{+,\times\}$,
        together with a parenthesization.
        For each operator $\bullet\in\{+,\times\}$ 
        in this sequence, $a$ must be locally of the form 
        \begin{align*}
            \cdots ((\alpha) \bullet (\beta))\cdots
        \end{align*}
        for some $\alpha,\beta\in A_n$.
        In this case, we say this operator $\bullet$
        acts on $(\alpha,\beta)$ in $a$.
        Moreover, we say $a\in A_n$ is reduced if 

        (1) for any operator $\times$ acting on 
        $\alpha, \beta$ in $a$, 
        \begin{align*}
            \alpha&\in\{a_{1,n},\cdots, a_{1,n}\},\\
            \beta&\not\in \{1_n,0_n\},
        \end{align*}
        
        (2) for any operator $+$ acting on 
            $\alpha', \beta'$ in $a$, 
            \begin{align*}
                \alpha'&\neq 0_n,\\
                \beta'&\neq 0_n.
            \end{align*}

        By applying the associativity and distributivity law,
        for each $b\in A'_n$, there exists a unique reduced $a\in A'_n$
        such that $\nu(a)=b$. We define $s: A'_n \to A_n$
        such that $s(x)$ is the unique reduced element in $\nu^{-1}(x)$.

        Regarding $\mathscr{P}(f)$ as the full sub-category of 
        reduced objects in
        $\mathscr{S}(f)$, we can modify the proof of Theorem
        \ref{thmsymbi} to show that algebras over $\mathscr{P}$ are precisely bipermutative
        categories.

        Moreover, since the $\widehat{\mathcal{R}}$ action 
        on $\mathscr{P}$ only changes the sequence of variables
        in the reduced expression of objects in $\mathscr{P}$,
        by the same proof of Proposition \ref{propsymbiEinfty},
        it follows that $B\mathscr{P}$ is an $E_\infty$ ring operad.
    \end{proof}

    \begin{cor}
        Let $C$ be a bipermutative category and let $0$ 
        be the based-point of the classifying space $BC$.
        Then there is a group completion
        \begin{align*}
            BC\to \Omega^\infty \mathbb{E}(BC)
        \end{align*}
        where $\mathbb{E}(BC)$ is an $E_\infty$ ring spectrum
        depending functorially on $C$.
    \end{cor}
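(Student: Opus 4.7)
The plan is to deduce this corollary as a direct application of Theorem \ref{thmbiperm} together with the general multiplicative infinite loop machine Theorem \ref{thmgpcom}, which has already been proved for algebras over an arbitrary $E_\infty$ ring operad.

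First, I would observe that by Theorem \ref{thmbiperm}, a bipermutative category $C$ is precisely a $\mathscr{P}$-algebra, where $\mathscr{P} = E\mathscr{P}_{set}$ is a ring operad in $(\mathbf{Cat}, \times, *)$. Since the classifying space functor $B = |N(-)|$ is product-preserving, $BC$ inherits the structure of a $B\mathscr{P}$-algebra in $(\mathscr{U}, \times, *)$, and this construction is functorial in $C$. Moreover, Theorem \ref{thmbiperm} asserts that $B\mathscr{P}$ is an $E_\infty$ ring operad, so the hypotheses of Theorem \ref{thmgpcom} are satisfied with $\mathscr{C} := B\mathscr{P}$ and $X := BC$.

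Next, I would apply Theorem \ref{thmgpcom} to the $B\mathscr{P}$-algebra $BC$. Letting $(\mathscr{K}, \mathscr{L})$ be the canonical operad pair with associated ring operad $\mathscr{R} = \mathscr{R}_{\mathscr{K}, \mathscr{L}}$ and associated monads $\mathbb{K}, \mathbb{L}, \mathbb{R}$, and letting $\mathbb{D}$ be the monad associated to $B\mathscr{P} \times \mathscr{R}$, the theorem produces the group completion composite
\begin{align*}
BC \simeq B(\mathbb{D}, \mathbb{D}, BC) \to B(\mathbb{R}, \mathbb{D}, BC) \simeq B(\mathbb{K}, \mathbb{K}, B(\mathbb{R}, \mathbb{D}, BC)) \to \Omega^\infty B(\Sigma^\infty, \mathbb{K}, B(\mathbb{R}, \mathbb{D}, BC)),
\end{align*}
and identifies $\mathbb{E}(BC) := B(\Sigma^\infty, \mathbb{K}, B(\mathbb{R}, \mathbb{D}, BC))$ as an $E_\infty$ ring spectrum. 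The functoriality in $C$ follows because every step in the construction (the forgetful passage, the bar constructions, and the spectrum assembly) is functorial in the input $\mathscr{C}$-algebra.

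There is essentially no obstacle here beyond bookkeeping: all the real work has been done in proving Theorem \ref{thmbiperm} (that $B\mathscr{P}$ is $E_\infty$) and in Theorem \ref{thmgpcom} (which in turn rests on the Comparison Theorem \ref{thmcompare1} and the classical multiplicative infinite loop machine of \cite{may1982multiplicative}). The only thing worth emphasizing in the write-up is that the corollary is strictly parallel to Theorem \ref{thmsynbigpcom}, with $\mathscr{S}$ replaced by $\mathscr{P}$; in fact, one could alternatively deduce it from Theorem \ref{thmsynbigpcom} by noting that every bipermutative category is in particular a tight symmetric bimonoidal category with strict zero and unit. I would mention this as a remark but carry out the direct application of Theorem \ref{thmgpcom} to $B\mathscr{P}$ as the cleaner proof.
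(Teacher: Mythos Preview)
Your proposal is correct and matches the paper's approach exactly: the paper states this corollary without proof immediately after Theorem \ref{thmbiperm}, treating it as an immediate consequence of that theorem together with Theorem \ref{thmgpcom}, just as you describe. Your observation that it is strictly parallel to Theorem \ref{thmsynbigpcom} with $\mathscr{S}$ replaced by $\mathscr{P}$ is also the implicit logic of the paper's presentation.
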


    \begin{rem}\label{comptwoconst}
        So far we have two different constructions
        from bipermutative category to classical $E_\infty$
        ring space: one shown in \cite{TheconstructionofE_infringspacesfrombipermutativecategories}
        and the other is given by ring operad theory.
        Now we will describe a comparison between them.

        To begin with, note that Definition \ref{defnconsofringandcat}
        assigns to each $E_\infty$ ring operad a category of 
        ring operators, but this construction also work on the 
        category level. Starting with the ring operad 
        $\mathscr{P}$, applying the construction in 
        Definition \ref{defnconsofringandcat}, we 
        get a categorically enriched category of ring operators
        $\tilde{\mathscr{P}}$, that is, 
        a category enriched in $\mathbf{Cat}$, i.e. a 
        strict 2-category, 
        which is over 
        $\mathscr{F}\wr\mathscr{F}$ and under 
        $\Pi\wr\Pi$ satisfying the same conditions in Definition 
        \ref{defnofcatofringop}. 
        Moreover, we regard a $\tilde{\mathscr{P}}$-category as 
        a 2-functor from $\tilde{\mathscr{P}}$ to the 2-category
        of small categories and we define the notion 
        of a special $\tilde{\mathscr{P}}$-category similarly.
        Under the same argument as Definition
        \ref{defnringandcatonalg}, we also get a canonical functor $
        \nu:\mathscr{P}[\mathbf{Cat}_e]\to \tilde{\mathscr{P}}^s[\mathbf{Cat}]$
        from the category of $\mathscr{P}$-algebras in $\mathbf{Cat}$
        to the category special $\tilde{\mathscr{P}}$-categories.

        Now we summarize the comparison between the two 
        constructions by the following figure. 
        \begin{center}
            \begin{codi}
                \obj{
                |(11)|h(\text{bipermutative cats}) &[10.5em] |(12)| h(\mathscr{P}[\mathbf{Cat}_e]) &[9em] |(13)|h(\mathscr{P}[\mathbf{Cat}_e])\\
                |(21)|h(\text{special }\mathscr{F}\wr\mathscr{F}\text{-cats}) &|(22)|h(\text{special } \tilde{\mathscr{P}}\text{-cats}) &\\
                |(31)|h(\text{special }\mathscr{F}\wr\mathscr{F}\text{-spaces}) & |(32)|h(\text{special }\widetilde{B\mathscr{P}}\text{-spaces}) &|(33)|h(B\tilde{\mathscr{P}}\text{-spaces})\\
                |(41)|h(\text{special }\hat{\mathscr{L}}\wr\hat{\mathscr{K}}\text{-spaces})& |(42)|h(\text{special }\hat{\mathscr{L}}\wr\hat{\mathscr{K}}\times_{\mathscr{F}\wr\mathscr{F}}\widetilde{B\mathscr{P}}\text{-spaces} )&|(43)|h(\mathscr{R}_{\mathscr{K},\mathscr{L}}\text{-spaces})\\
                &|(52)|h((\mathscr{K},\mathscr{L})\text{-spaces})&\\
                &|(62)|h(E_\infty \text{ ring spectra})&\\
                };
                \mor :[shove=+.1em] 12 [=] 13;
                \mor :[shove=-.1em] 12 [=] 13;
                \mor 11 -> 21 "B":-> 31 "\simeq":-> 41;
                \mor 12 "\nu":-> 22 "B":-> 32 "\simeq":-> 42;
                \mor 13 "B":-> 33 -> 43;
                \mor 11 "\cong":-> 12;
                \mor 21 -> 22;
                \mor 31 "\simeq":-> 32;
                \mor 42 "\simeq":-> 41;
                \mor 33 "\simeq":-> 32;
                \mor 42 "\simeq":-> 43;
                \mor 41 "\simeq":-> 52;
                \mor 43 "\simeq":-> 52;
                \mor 52 -> 62;
            \end{codi}
        \end{center}

        Here $hC$ means the homotopy category of $C$.
        In particular, the category of special $\tilde{\mathscr{P}}$-categories
        is by definition a full sub-2-category of the 2-category
        of 2-functors between the 2-category $\tilde{\mathscr{P}}$
        and the 2-category of categories, but here we only consider its
        homotopy category.

        The whole diagram commutes automatically up to natural isomorphisms by definition
        except for the top left square.
        Moreover, each homotopy equivalence arrow in the above diagram
        induces an homotopy equivalence on the underlying spaces.
        Here the underlying space of a special $\mathscr{J}$-space for some
        category of ring operators $\mathscr{J}$ is its value on the object
        $(1,1)\in\mathscr{J}$.

        Note that the composite of all these left arrows 
        is precisely the construction in \cite{TheconstructionofE_infringspacesfrombipermutativecategories}
        and the composite of all these right arrows 
        is precisely the construction given by ring operad theory.
        Therefore, to compare these two constructions, we only need to 
        consider the top left square.
        \begin{prop}
            The following square commutes up to natural equivalence.
            As a corollary, the two constructions from 
            bipermutative categories to $E_\infty$ ring spectra
            coincide up to homotopy.
            \begin{center}
                \begin{codi}
                    \obj{
                    |(11)|h(\text{bipermutative cats}) &[10em] |(12)| h(\mathscr{P}[\mathbf{Cat}_e]) \\
                    |(21)|h(\text{special }\mathscr{F}\wr\mathscr{F}\text{-cats}) &|(22)|h(\text{special } \tilde{\mathscr{P}}\text{-cats})\\
                    };
                    \mor 11 -> 21;
                    \mor 12 "\nu":-> 22;
                    \mor 11 "\cong":-> 12;
                    \mor 21 -> 22;
                \end{codi}
            \end{center}
        \end{prop}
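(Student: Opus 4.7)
The plan is to verify the square commutes by unwinding the two functorial constructions and matching them on objects and morphisms. First, by Theorem \ref{thmbiperm}, the top horizontal isomorphism identifies a bipermutative category $C$ with its corresponding $\mathscr{P}$-algebra structure. Both the left--bottom and the top--right composites produce, after composition with the forgetful functor $\tilde{\mathscr{P}}^s[\mathbf{Cat}] \to (\mathscr{F}\wr\mathscr{F})^s[\mathbf{Cat}]$ on the right, an $\mathscr{F}\wr\mathscr{F}$-category whose value on an object $(n, S) = (n, s_1, \ldots, s_n)$ is $C^{s_1} \times \cdots \times C^{s_n}$, so the two composites agree on objects on the nose, and the content of the proposition is a comparison on morphisms.

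Next I would analyze what the bottom forgetful functor does. Pulling back along the projection $\tilde{\mathscr{P}} \to \mathscr{F}\wr\mathscr{F}$ collapses each parameter space $\mathscr{P}(f_{\phi,d,h,j})$ to a single point; concretely, by the section $s : A'_n \to A_n$ constructed in the proof of Theorem \ref{thmbiperm}, there is a canonical reduced representative of each $f_{\phi,d,h,j}$, and the $\mathscr{P}$-action on this representative determines a specific functor $C^{r_1}\times\cdots\times C^{r_m}\to C^{s_1}\times\cdots\times C^{s_n}$ built out of the bipermutative $+,\times,0,1$ on $C$. On the other hand, the classical functor of \cite{TheconstructionofE_infringspacesfrombipermutativecategories} sends $(\phi, d)$ to the functor assembling, on each coordinate $(h, j)$, the sum over $i \in \phi^{-1}(j)$ of the products over $k \in d_j^{-1}(h)$ of the relevant input variables, parenthesized and ordered using strict associativity and strict right distributivity.

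The heart of the proof is matching these two prescriptions. By inspection of Definition \ref{defnconsofringandcat}, the polynomial $f_{\phi, d, h, j}$ is precisely the formal sum-of-products that the classical functor realizes on the $(h, j)$-coordinate. The canonical reduced lift in $A'_{|R|}$ picks a specific parenthesization and variable ordering, and one checks that after imposing the bipermutative laws on $C$, this lift and the May-style convention in \cite{TheconstructionofE_infringspacesfrombipermutativecategories} define the same functor, or at worst differ by a canonical isomorphism built from (strict or coherent) instances of the associator, unitor, commutator, and distributor. Compatibility with composition follows from the polynomial composition identity displayed in Definition \ref{defnconsofringandcat}, which mirrors exactly the way the classical construction composes sums-of-products, combined with the uniqueness of reduced forms in $A'_n$.

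The main obstacle is bookkeeping: the two constructions parenthesize their sum-of-products expressions according to different but coherent-equivalent conventions, so a direct equality of functors is not expected, only a natural isomorphism, and one must track how the canonical reduced form in $A'_n$ corresponds coordinate by coordinate with the convention of \cite{TheconstructionofE_infringspacesfrombipermutativecategories}. The residual discrepancy is packaged into a natural isomorphism of $\mathscr{F}\wr\mathscr{F}$-diagrams whose components are instances of the coherence isomorphisms of $C$; by the Laplaza--Johnson--Yau coherence theorem \cite[Volume I, Theorem 3.9.1]{johnson2021bimonoidal} these assemble into a well-defined natural equivalence, so the square commutes up to natural equivalence. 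Naturality in $C$ is immediate since both constructions are manifestly functorial in morphisms of bipermutative categories, and the corollary that the two ring-spectrum constructions coincide up to homotopy then follows from the commutativity of the remaining squares already noted in the preceding diagram.
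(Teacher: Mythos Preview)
Your approach has a genuine gap at the very first step. You assert that the left-bottom composite sends a bipermutative category $C$ to an $\mathscr{F}\wr\mathscr{F}$-category whose value on $(n,S)$ is $C^{s_1}\times\cdots\times C^{s_n}$. This is not what May's construction does. In \cite{may1982multiplicative} the passage from a bipermutative category to a genuine $\mathscr{F}\wr\mathscr{F}$-category is a two-step process: one first writes down a \emph{lax} functor $\mathscr{F}\wr\mathscr{F}\to\mathbf{Cat}$ with those values, and then applies Street's strictification \cite[Theorem 3.4]{MAY1980299} to turn it into a genuine functor. The strictification replaces each $C^{s_1}\times\cdots\times C^{s_n}$ by a larger (equivalent) category, so the two composites do \emph{not} agree on objects, and your proposed coordinate-by-coordinate comparison of morphism assignments never gets off the ground.

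Relatedly, the ``forgetful functor $\tilde{\mathscr{P}}^s[\mathbf{Cat}]\to(\mathscr{F}\wr\mathscr{F})^s[\mathbf{Cat}]$'' you invoke does not exist as a genuine functor: choosing one object of each $\mathscr{P}(f_{\phi,d,h,j})$ gives only a section of $\tilde{\mathscr{P}}\to\mathscr{F}\wr\mathscr{F}$ at the level of 1-morphisms, and that section is only \emph{lax} functorial because the canonical reduced representatives are not closed under the composition in $\tilde{\mathscr{P}}$. This laxness is not a bookkeeping nuisance to be absorbed by Laplaza coherence; it is the entire content of the comparison. The paper's proof makes this explicit: it shows that any 2-functor $F:\tilde{\mathscr{P}}\to\mathbf{Cat}$ determines a lax functor $\bar{F}:\mathscr{F}\wr\mathscr{F}\to\mathbf{Cat}$ (via the canonical objects $t(f)$), checks that for $F=\nu(C)$ this $\bar{F}$ is literally the lax functor appearing in \cite{may1982multiplicative}, and then invokes the strictification theorem \cite[Theorem 3.4]{MAY1980299}, whose conclusion is precisely that the strictified functor is naturally equivalent to the lax one. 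Your argument needs to be reorganized around this lax-functor intermediary rather than a direct comparison of strict diagrams.
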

        \begin{proof}
            Recall that in \cite{may1982multiplicative},
            the passage from bipermutative categories
            to $\mathscr{F}\wr\mathscr{F}$-categories
            is constructed as follows. We first construct
            a lax functor from $\mathscr{F}\wr\mathscr{F}$ 
            to $\mathbf{Cat}$, and then apply
            \cite[Theorem 3.4]{MAY1980299} to strictify
            it to a genuine functor.

            Actually this proposition holds directly 
            from \cite[Theorem 3.4]{MAY1980299}. 
            To see this, note that each 
            2-functor $F$ from $\tilde{\mathscr{P}}$
            to the 2-category of categories determines a 
            lax functor $\bar{F}$ from $\mathscr{F}\wr\mathscr{F}$ 
            to $\mathbf{Cat}$ as follows:

            (1) $\bar{F}(n,S)=F(n,S)$;

            (2) 
            For any $f\in\coprod\mathcal{R}(n)$,
            say 
            $$f=\sum_{I=(i_1,i_2,\cdots,i_m)\in \{0,1\}^m\setminus\{0\}^m} 
            \varepsilon_I a_{1,m}^{i_1}\cdots a_{m,m}^{i_m}\in \coprod\mathcal{R}(n).$$
            Let 
            $t(f):=\sum_{I=(i_1,i_2,\cdots,i_m)\in \{0,1\}^m\setminus\{0\}^m} 
            \varepsilon_I a_{1,m}^{i_1}\cdots a_{m,m}^{i_m}\in\mathrm{Obj}(\mathscr{P}(f))$
            under the canonical order of both $\Lambda_f$ and $\Gamma_I$.
            By definition,
            for each morphism $(\phi,d):(m,R)\to (n,S)$ in 
            $\mathscr{F}\wr\mathscr{F}$, the preimage of $(\phi,d)$
            under the canonical map $\tilde{\mathscr{P}}\to\mathscr{F}\wr\mathscr{F}$
            is 
            $$\prod_{h,j}\mathscr{P}(f_{\phi,d,h,j}).$$
            We define $\bar{F}(\phi,d):=F((tf_{\phi,d,h,j})_{h,j})$.

            (3) $\sigma((\phi,d),(\psi,d')):\bar{F}(\phi,d)\circ\bar{F}(\phi',d')\to\bar{F}((\phi,d)\circ(\phi',d'))$
            is defined to be the canonical equivalence.

            Let $(A,\oplus,\otimes,0,1)$ be an arbitrary 
            bipermutative category. Then 
            the composite of the top and the right arrow
            in this diagram sends $A$ to a $\tilde{\mathscr{P}}$-category
            $A'$, which determines a lax functor $\bar{A'}$.
            By carefully checking definitions, we have this 
            lax functor $\bar{A'}$ coincides with that appears in the 
            passage from bipermutative categories
            to $\mathscr{F}\wr\mathscr{F}$-categories
            in \cite{may1982multiplicative}.
            Therefore, this proposition holds directly 
            from the strictification theorem \cite[Theorem 3.4]{MAY1980299}
            from lax functor to genuine functor since the strictification 
            of a lax functor doesn't change homotopy type.
        \end{proof}
    \end{rem}

    We end this section with a proof of the strictification
    theorem from symmetric bimonoidal categories
    to bipermutative categories. Recall that when $\nu': \mathscr{C}\to \mathscr{C}'$
    is a morphism between $E_\infty$ ring operads, 
    any $\mathscr{C}$-algebra is homotopy equivalent to some 
    $\mathscr{C}'$-algebra.
    A similar result holds for the functor 
    $\nu: \mathscr{S}\to \mathscr{P}$. Of course, we
    can prove this by modifying the proof in Section 
    \ref{Sectiondiffoperad}. However, in the special 
    case $\nu: \mathscr{S}\to \mathscr{P}$,
    a simpler proof is possible.

    \begin{thm}\label{thmstrictification}
        There is a functor $\Phi$ from the category of
        tight symmetric bimonoidal categories
        (with strict unit and zero objects) to the category of bipermutative categories 
        and a natural equivalence $\eta: \Phi C \to C$ of symmetric bimonoidal
        categories. 
    \end{thm}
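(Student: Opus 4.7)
The plan is to construct $\Phi C$ as the free bipermutative category on $C$ regarded as an object of $\mathbf{Cat}_e$, namely $\Phi C := \mathbb{P} C$, where $\mathbb{P}$ is the monad associated to the ring operad $\mathscr{P}$. By Theorem \ref{thmbiperm}, $\Phi C$ is automatically a bipermutative category. Concretely, its objects are represented by pairs $(\alpha, (c_1,\ldots,c_n))$ with $\alpha \in \mathscr{P}_{set}(f) = \mathrm{Obj}(\mathscr{P}(f))$ for some $f\in\mathcal{R}(n)$ and $c_i\in\mathrm{Obj}(C)$, modulo the $\widehat{\mathcal{R}}^{op}$-action coming from the coend in Definition \ref{defnofmonad}. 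Because each $\mathscr{P}(f)=E\mathscr{P}_{set}(f)$ is indiscrete, morphisms in $\Phi C$ reduce to morphisms in $C^{|f|}$ together with the unique coherence isomorphism between any two parallel bracketings over the same $f$.

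The evaluation functor $\eta:\Phi C\to C$ is then defined using the section $s:A'_n\to A_n$ from the proof of Theorem \ref{thmbiperm} together with the $\mathscr{S}$-action $\theta:\mathscr{S}(f)\times C^{|f|}\to C$ furnished by the tight symmetric bimonoidal structure on $C$: on a representative set
\[
\eta(\alpha,c_1,\ldots,c_n) := \theta\bigl(s(\alpha),c_1,\ldots,c_n\bigr).
\]
Well-definedness on coend classes splits into the two cases of the canonical decomposition $\phi=p\circ\sigma$: effective maps are handled because $s$ is constructed so that it commutes with the induced substitution on variables, and singular maps are handled by the strict nullity and unit relations built into $A'_n$ via the unit diagrams of $\theta$. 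Extension to morphisms uses the indiscreteness of $\mathscr{S}(f)$, so that the unique arrow between any two objects of $\mathscr{S}(f)$ maps, under $\theta$, to the canonical coherence isomorphism in $C$ between the two possible reductions.

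To finish, I would show that $\eta$ is an equivalence of symmetric bimonoidal categories. Essential surjectivity is immediate since $c = \eta(a_{1,1},c)$ for every $c\in C$. Fullness and faithfulness follow from the coherence theorem for tight symmetric bimonoidal categories (\cite[Volume I, Theorem 3.9.1]{johnson2021bimonoidal}): any two parallel coherence morphisms between fixed evaluations in $C$ agree, so the hom-sets of $\Phi C$ match those of $C$ under $\eta$. Functoriality of $\Phi$ on morphisms of symmetric bimonoidal categories is inherited from $\mathbb{P}$, and the naturality of $\eta$ follows because any such morphism commutes strictly with the $\mathscr{S}$-action and is compatible with $s$ pointwise.

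The main obstacle is constructing $\eta$ as a genuine functor preserving the symmetric bimonoidal structure. The difficulty is that the section $s$ is not a morphism of ring operads: in general one has $s(\gamma_{\mathscr{P}}(\alpha,\beta_1,\ldots,\beta_k))\neq \gamma_{\mathscr{S}}(s(\alpha),s(\beta_1),\ldots,s(\beta_k))$. This is precisely the place where the hinted simplification enters: because each $\mathscr{S}(f)$ is indiscrete, the two elements above are connected by a unique isomorphism in $\mathscr{S}$, which $\theta$ converts into a canonical coherence isomorphism in $C$, absorbing the failure of strictness. Consequently no bar construction or cofibrant replacement is needed, and the entire argument rests on (i) the indiscreteness of $\mathscr{S}(f)$ and $\mathscr{P}(f)$ and (ii) the coherence theorem, both available in the present setting.
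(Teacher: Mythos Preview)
Your construction shares its core idea with the paper's—define $\tilde\eta:\mathbb{P}C\to C$ as the composite of the section $s:\mathbb{P}C\to\mathbb{S}C$ with the $\mathscr{S}$-algebra structure map $\theta:\mathbb{S}C\to C$—but you stop one step too early. You set $\Phi C:=\mathbb{P}C$ and claim $\tilde\eta$ is itself an equivalence; the paper instead defines $\Phi C$ to be the quotient of $\mathbb{P}C$ by the congruence on hom-sets given by $\tilde\eta(f)=\tilde\eta(g)$, so that the induced $\eta:\Phi C\to C$ is faithful \emph{by construction}.

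The gap is faithfulness. Your argument that ``any two parallel coherence morphisms between fixed evaluations in $C$ agree'' invokes the coherence theorem in the wrong direction: coherence says distinct formal coherence paths \emph{evaluate to the same morphism in $C$}, which is precisely what makes $\tilde\eta$ collapse morphisms, not what makes it injective on hom-sets. Concretely, $\mathbb{P}C$ is the free bipermutative category on the underlying object of $\mathbf{Cat}_e$ and therefore ignores the symmetric bimonoidal structure of $C$ entirely. Take $C$ to be the discrete bipermutative category on $\mathbb{N}$ with $\oplus=+$ and $\otimes=\times$: then $c\oplus c$ has only the identity endomorphism in $C$, whereas $[(a_{1,2}+a_{2,2},c,c)]$ in $\mathbb{P}C$ still carries the nontrivial symmetry as an automorphism, and $\tilde\eta$ sends it to the identity. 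Hence $\tilde\eta$ is not faithful and your $\Phi C$ is not equivalent to $C$.

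The paper's fix is cheap and is exactly the missing step: since $\tilde\eta$ is a symmetric bimonoidal functor, its kernel congruence on morphisms is compatible with the bipermutative operations, so the quotient $\Phi C$ is again bipermutative (objects are unchanged, so all strictness is inherited from $\mathbb{P}C$). Faithfulness of $\eta$ is then tautological, while fullness and essential surjectivity follow—as you correctly observed—from the unit $C\to\mathbb{P}C\to\Phi C$ splitting $\eta$.
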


    \begin{proof}
        Let $\mathbb{S}$, $\mathbb{P}$ be the monads in $\mathbf{Cat}$ associated
        to $\mathscr{S}$, $\mathscr{P}$, respectively.

        By definition of the section map $s:\mathscr{P}\to \mathscr{S}$
        sending each object $x$ to the unique reduced object in $\nu^{-1}(x)$,
        we have $s:\mathscr{P}\to \mathscr{S}$ is a natural transformation 
        between functors over $\widehat{\mathcal{R}}$, although
        it is not a morphism between ring operads.
        Therefore, there is a well-defined functor 
        $s:\mathbb{P} X \to \mathbb{S} X$.

        When $X$ is symmetric bimonoidal,
        we define 
        \begin{center}
            \begin{codi}
                \obj{
                    |(0)|\tilde{\eta}: \mathbb{P} X
                    &|(1)|\mathbb{S} X
                    &|(2)|X.\\
                    };
                \mor 0 "s":-> 1 "\theta":-> 2;
            \end{codi}
        \end{center}

        Moreover, we define a relation on each hom-set 
        $\mathbb{P} X(A,B)$ such that two morphisms
        $f,g\in \mathbb{P} X(A,B)$ are equivalent if and only if 
        $\tilde{\eta}(f)=\tilde{\eta}(f)$. Then we denote the 
        quotient category by $\Phi X$.

        Because $\tilde{\eta}: \mathbb{P} X \to X$ is symmetric bimonoidal,
        we get the following commutative diagram in the category of 
        symmetric bimonoidal categories:

        \begin{center}
            \begin{codi}
                \obj{
                    |(0)|\mathbb{P} X
                    &[5em] |(1)|X\\
                    |(2)|\Phi X
                    &\\
                    };
                \mor 0 "\tilde{\eta}":-> 1;
                \mor 0 "\text{quotient}":-> 2 "\eta":-> 1;
            \end{codi}
        \end{center}

        Here $\eta$ is faithful by construction and it is full and 
        essentially surjective since 
        \begin{center}
            \begin{codi}
                \obj{
                    |(0)|X
                    &[3em]|(1)|\mathscr{P}(a_{1,1})\times X
                    &[3em]|(2)|\mathbb{P} X
                    &[2em]|(3)|\Phi X 
                    &|(4)| X\\
                    };
                \mor 0 "\text{unit}":-> 1 -> 2 "\text{quotient}":-> 3 "\eta":-> 4;
            \end{codi}
        \end{center}
        is the identity. Therefore, the theorem holds.
    \end{proof}

    \begin{rem}
        Our construction is different from \cite[Section VI.3]{may1977ring},
        but there is a canonical comparison equivalence
        $$\Phi X \to \Phi'X$$
        where $\Phi'X$ is the bipermutative category constructed in \cite{may1977ring}. 
    \end{rem}

    \appendix
    \section{Proof of the comparison theorem}\label{appendix}
    
    \subsection{Properties of the category $\widehat{\mathcal{R}}_{eff}$}\label{Sectioncomb}

    In this section, we describe the structure of 
    $\widehat{\mathcal{R}}_{eff}$ more precisely. These
    combinatorial descriptions will be used in the 
    next section to prove the Comparison Theorem \ref{thmcompare1}.
    
    Recall that $(f_m,\phi,f_n)\in\widehat{\mathcal{R}}_{eff}(f_m,f_n)$
    if and only if 
    $$f_n(a_{1,n},\cdots,a_{n,n})=f_m(a_{\phi(1),n},\cdots,a_{\phi(m),n}).$$
    In general, the hom-set $\widehat{\mathcal{R}}_{eff}(f_m,f_n)$
    between two arbitrary objects $f_m$ and $f_n$ might
    be empty.
    We regard a morphism from $f_m$ to $f_n$ as a relation 
    between them and two related objects should have something in common.
    
    More precisely, we say two objects in a small category are connected
    if they are path-connected in the classifying space; that is,
    there is a zig-zag diagram of morphisms connecting these two 
    objects. Now, we describe the connected components
    of $\widehat{\mathcal{R}}_{eff}$ precisely.

    To begin with, by Lemma \ref{type}, if two objects
    $f_m,f_n$ are contained in the same connected component, we must have:

    (1) $|\Lambda_{f_m}|=|\Lambda_{f_n}|$;

    (2) the ordered $|\Lambda_{f_m}|$-tuple 
    $(|\Gamma_I|)_{I\in\Lambda_{f_m}}$ is a permutation of 
    $(|\Gamma_J|)_{J\in\Lambda_{f_n}}$.

    For a non-negative integer $k$ and a non-decreasing sequence
    of $l$ positive integers 
    $0<k_1\leqslant i_2\leqslant \cdots\leqslant k_l$, 
    we say an object $f$ in $\widehat{\mathcal{R}}_{eff}$ is of 
    type $(l;k_1,\cdots,k_l)$ if and only if 
    $|\Lambda_{f}|=l$ and $(|\Gamma_I|)_{I\in\Lambda_{f_m}}$
    is a permutation of $(k_1,\cdots,k_l)$. Therefore,
    two connected objects are of the same type.

    In fact, the converse statement is also true.
    To see this, note that by definition \ref{special}, 
    the special object 
    $$f=a_{1,n}\cdots a_{k_1,n}+a_{k_1+1,n}\cdots a_{k_1+k_2,n}+\cdots+a_{k_1+\cdots+k_{l-1}+1,n}\cdots a_{k_1+\cdots+k_{l},n}$$
    is of type $(l;k_1,\cdots,k_l)$ and two different
    special objects are of different types. More precisely, we have the 
    following result.

    \begin{prop} \label{propofspecial}
        For each $f\in\coprod \mathcal{R}(n)$,
    there exists a unique special $g\in\coprod \mathcal{R}(n)$ such that 
    $\widehat{\mathcal{R}}_{eff}(g,f)$ is non-empty. Moreover, 
    two different special objects are in different connected
    components. Therefore, the collection of special objects
    gives a collection of representatives for connected
    components in $\widehat{\mathcal{R}}_{eff}$.
    \end{prop}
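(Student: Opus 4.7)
The plan is to isolate the \emph{type} $(l;k_1,\ldots,k_l)$ as a complete combinatorial invariant of connected components in $\widehat{\mathcal{R}}_{eff}$, and then observe that special objects are exactly the normal forms for this invariant.

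First I would show that the type is preserved under effective morphisms. By Lemma~\ref{type}, for each $(f_m,\phi,f_n)\in\widehat{\mathcal{R}}_{eff}(f_m,f_n)$ there is a bijection $\tilde\phi\colon\Lambda_{f_m}\to\Lambda_{f_n}$ whose restriction to each $\Gamma_I$ is a bijection onto $\Gamma_{\tilde\phi(I)}$. Hence $|\Lambda_{f_m}|=|\Lambda_{f_n}|$ and the multisets $\{|\Gamma_I|:I\in\Lambda_{f_m}\}$ and $\{|\Gamma_J|:J\in\Lambda_{f_n}\}$ coincide, so $f_m$ and $f_n$ share a common type. Following a zig-zag, type is an invariant of the connected component of any object in $\widehat{\mathcal{R}}_{eff}$.

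Next, for existence, given $f\in\mathcal{R}(m)$ of type $(l;k_1,\ldots,k_l)$, I would exhibit an explicit effective morphism from the special object of this type to $f$. Let $n=k_1+\cdots+k_l$ and let $g\in\mathcal{R}(n)$ be the special object of type $(l;k_1,\ldots,k_l)$. Choose a permutation $\pi$ of $\Lambda_f$ ordering the monomials by non-decreasing degree, so $|\Gamma_{\pi(s)}|=k_s$, and write $\Gamma_{\pi(s)}=\{j_{s,1}<\cdots<j_{s,k_s}\}$. Define $\phi\colon\mathbf{n}_e\to\mathbf{m}_e$ by
\[
\phi(k_1+\cdots+k_{s-1}+t)=j_{s,t}\quad\text{for }1\leqslant s\leqslant l,\ 1\leqslant t\leqslant k_s,
\]
and $\phi(0)=0$, $\phi(e)=e$. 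Then $\phi$ is effective by construction, and a direct monomial-by-monomial computation gives $\phi_*g=\sum_s\prod_{j\in\Gamma_{\pi(s)}}a_{j,m}=f$, so $(g,\phi,f)\in\widehat{\mathcal{R}}_{eff}(g,f)$.

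Finally, both uniqueness assertions follow from the type-rigidity established in the first step together with the fact that two special objects of the same type are literally equal by Definition~\ref{special}. Indeed, if $g,g'$ are both special with $\widehat{\mathcal{R}}_{eff}(g,f)$ and $\widehat{\mathcal{R}}_{eff}(g',f)$ non-empty, then $g,g',f$ all share a common type, so $g=g'$; and if two special objects lie in the same connected component, they have the same type and are therefore equal. The main (and only) bookkeeping step is the verification $\phi_*g=f$ in the construction above; everything else is a formal consequence of Lemma~\ref{type}.
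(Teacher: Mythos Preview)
Your proposal is correct and follows essentially the same approach as the paper: both arguments use the type invariant (established via Lemma~\ref{type}) to separate special objects into distinct components, and both construct the explicit effective morphism $g\to f$ by enumerating the monomials of $f$ in non-decreasing degree and reading off their variable indices in order. Your write-up is slightly more explicit about the uniqueness step and about packaging the type invariance inside the proof rather than before it, but the mathematical content is identical.
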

    \begin{proof}
    Different special objects are in different connected
    components because they are of different types, so it suffices
    to construct for any object $f$ a morphism from the 
    special object of the same type as $f$ to $f$ itself.

    Indeed, suppose
    $$f=\sum_{I=(i_1,i_2,\cdots,i_n)\in \{0,1\}^n} 
        \varepsilon_I a_{1,n}^{i_1}\cdots a_{n,n}^{i_n}$$
    is of type $(l,k_1,\cdots,k_l)$. Then we want to 
    find a morphism from the special object
    $$g=a_{1,n}\cdots a_{k_1,n}+a_{k_1+1,n}\cdots a_{k_1+k_2,n}+\cdots+a_{k_1+\cdots+k_{l-1}+1,n}\cdots a_{k_1+\cdots+k_{l},n}$$
    to $f$.

    Let $$\Lambda_f=:\{I_1,\cdots,I_l\}$$ be such that 
    $|\Gamma_{I_j}|=k_j$ and let $$I_j=\{t_{1,j}<t_{2,j}<\cdots<t_{k_j,j}\}.$$
    Then 
    \begin{align*}
        \phi: \{1,2,\cdots,k_1+\cdots+k_l\}&\to \{1,2,\cdots,n\}\\
        \sum_{h=1}^{j-1}k_h+s&\mapsto t_{s,j}\text{ if } 1\leqslant s \leqslant k_j
    \end{align*}
    gives a well-defined morphism $(g,\phi,f)\in\widehat{\mathcal{R}}(g,f)$
    by definition.
    \end{proof}

    Now we focus on these non-degenerate objects. Actually, those 
    degenerate objects appear in the definition of ring operad
    only because the non-degenerate ones are not closed under composition.
    In practice, for a ring operad $\mathscr{C}$,
    the collection of $\mathscr{C}(f)$ indexed by 
    non-degenerate objects $f$ already provides enough information.
    For example, $\mathscr{C}(a_{1,3}a_{2,3}+a_{1,3}a_{3,3})$
    is regarded as a collection of operators of the form 
    $$(a,b,c)\to (ab+ac),$$
    and $\mathscr{C}(a_{1,4}a_{2,4}+a_{1,4}a_{3,4})$
    is regarded as a collection of operators of the form 
    $$(a,b,c,d)\to (ab+ac).$$
    Therefore, we do not expect $\mathscr{C}(a_{1,4}a_{2,4}+a_{1,4}a_{3,4})$
    to contain more information than $\mathscr{C}(a_{1,3}a_{2,3}+a_{1,3}a_{3,3})$.
    This interpretation leads to one of the conditions in Definition \ref{defnEinfty}
    of $E_\infty$ ring operads.

    Now we describe the structure of the full sub-category $\widehat{\mathcal{R}}_{n.d.}$ of 
    $\widehat{\mathcal{R}}_{eff}$ generated by non-degenerate objects.

    Consider morphisms in $\widehat{\mathcal{R}}_{n.d.}$ first. We 
    have the following lemma.
    \begin{lem}\label{lemofnondeg}
        Let $(f,\phi,g)$ be an effective morphism in $\widehat{\mathcal{R}}_{eff}$
        with $f$ non-degenerate.
        Then $g$ is non-degenerate if and only if 
        $\phi:\{0,e,1,2,\cdots,|f|\}\to \{0,e,1,2,\cdots,|g|\}$ is surjective.
        In this case, $|f|\geqslant|g|$. 
        
        In particular, for any map 
        $\psi:\{0,e,1,2,\cdots,|f|\}\to \{0,e,1,2,\cdots,|f|\}$,
        $(f,\psi,\psi_*f)$ defines a morphism in $\widehat{\mathcal{R}}_{n.d.}$
        if and only if $\psi$ is a bijection.
    \end{lem}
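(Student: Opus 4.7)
My plan is to reduce the whole statement to an analysis of which variables $a_{j,|g|}$ appear in $g = \phi_* f$. The key observation is that since $\phi$ is effective, $\phi^{-1}(0)=\{0\}$ and $\phi^{-1}(e)=\{e\}$, so the substitution $a_{i,|f|}\mapsto a_{\phi(i),|g|}$ sends each variable of $f$ (with $1\leqslant i\leqslant |f|$) to a genuine variable $a_{\phi(i),|g|}$ with $\phi(i)\in\{1,\ldots,|g|\}$. Consequently $g$ is a $\mathbb{Z}_{\geqslant 0}$-sum of monomials involving only those variables $a_{j,|g|}$ with $j$ in the image of $\phi|_{\{1,\ldots,|f|\}}$. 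This already handles one direction: if some $j\in\{1,\ldots,|g|\}$ is not in the image of $\phi$, then $a_{j,|g|}$ cannot appear in any monomial of $g$, so $\partial g/\partial a_{j,|g|}=0$ and $g$ is degenerate.

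For the converse, assume $\phi$ is surjective and fix $j\in\{1,\ldots,|g|\}$. Pick any $i$ with $\phi(i)=j$; since $f$ is non-degenerate, some monic monomial $m$ occurring in $f$ contains the variable $a_{i,|f|}$, so $\phi_* m$ contains $a_{j,|g|}$. Because all coefficients in $\phi_* f=\sum_m \phi_* m$ are non-negative, no cancellation is possible, and hence $a_{j,|g|}$ really occurs in $g$. This shows $\partial g/\partial a_{j,|g|}\neq 0$ for every $j$, i.e.\ $g$ is non-degenerate. The cardinality inequality $|f|\geqslant|g|$ is then immediate from the existence of a surjection $\{1,\ldots,|f|\}\twoheadrightarrow\{1,\ldots,|g|\}$.

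For the final ``in particular'' clause I would specialise this to self-maps $\psi\in\mathbf{Set}_e(\mathbf{|f|}_e,\mathbf{|f|}_e)$. If $(f,\psi,\psi_* f)$ lies in $\widehat{\mathcal{R}}_{n.d.}$, then $\psi$ is effective and $\psi_* f$ is non-degenerate, so by the main equivalence $\psi$ is surjective on a finite set of equal cardinality, hence a bijection. Conversely, a bijection $\psi$ in $\mathbf{Set}_e$ is automatically effective and just permutes the variable labels $\{1,\ldots,|f|\}$; such a permutation preserves the defining conditions of $\mathcal{R}(|f|)$ listed in the opening lemma (no squared variables, coefficients in $\{0,1\}$, zero constant term), so $\psi_* f\in\mathcal{R}(|f|)$, and the main equivalence gives that $\psi_* f$ is non-degenerate.

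I do not expect any serious obstacle here: the entire argument is bookkeeping once one notices that effectiveness of $\phi$ makes the substitution act by a genuine variable-to-variable map with no sign cancellation. The only place requiring a little care is ensuring, in the converse direction, that the monomial $\phi_* m$ witnessing $a_{j,|g|}$ is not killed by combining with other monomials; this is ruled out by positivity of coefficients, or equivalently by the hypothesis that $g$ already belongs to $\mathcal{R}(|g|)$ so that all its monomials have coefficient $1$.
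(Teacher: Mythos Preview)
Your proposal is correct and follows essentially the same approach as the paper's proof: both directions hinge on the observation that (since $\phi$ is effective) the substitution acts by a genuine variable-to-variable map, so the variables appearing in $g$ are exactly those indexed by the image of $\phi$, with non-degeneracy of $f$ supplying a witness monomial for each such variable. Your explicit mention of positivity to rule out cancellation is a point the paper leaves implicit, and your handling of the ``in particular'' clause (reducing to a self-surjection on a finite set and checking that a permutation preserves membership in $\mathcal{R}(|f|)$) matches the paper's argument exactly.
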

    \begin{proof}
        Suppose $\phi$ is not surjective.
        Pick a $k\in \{1,2,\cdots,|g|\}\setminus \mathrm{Im}(\phi)$. Then 
        since 
        $$g(a_{1,|g|},\cdots,a_{|g|,|g|})=f(a_{\phi(1),|g|},\cdots,a_{\phi(|g|),|g|}),$$ 
        $\frac{\partial}{\partial a_{k,|g|}}g=0$, hence
        $g$ is degenerate.

        Conversely, if $\phi$ is surjective and $\psi_*f=g\in\mathcal{R}(|g|)$, then
        $\frac{\partial}{\partial a_{k,|f|}}\psi_*f\neq 0$
        follows from $\frac{\partial}{\partial a_{\phi^{-1}(k),|f|}}f\neq 0$,
        so $\psi_*f$ is non-degenerate.
        
        In particular, note that a bijection $\psi$ on $\{1,2,\cdots,|f|\}$
        induces a bijection on monomials in $\mathcal{R}(|f|)$,
        so we get $\psi_*f\in\mathcal{R}(|f|)$. Therefore, the lemma holds.
    \end{proof}

    We end this section with a description of the connected components
    of $\widehat{\mathcal{R}}_{n.d.}$.
    As shown in Proposition \ref{propofspecial}, each non-degenerate
    object $f$ admits a morphism from the special object $g$
    of the same type as $f$ to $f$ itself. Such a morphism exists implies
    $|g|\geqslant|f|$. Therefore, the following results hold.

    \begin{notn}
        Let $f$ be a special object. Let $\widehat{\mathcal{R}}_{n.d.}(f)$
    be the connected component containing $f$ with objects $\mathcal{R}_{n.d.}(f)$ and 
    $\widehat{\mathcal{R}}_{n.d.}(\geqslant n)$ ($\widehat{\mathcal{R}}_{n.d.}(n)$, resp.)
    be the sub-category generated by non-degenerate $g$ with $|g|\geqslant n$
    ($|f|=n$, resp.)
    with objects $\mathcal{R}_{n.d.}(\geqslant n)$ ($\mathcal{R}_{n.d.}(n)$, resp.). Denote 
    \begin{align*}
        \widehat{\mathcal{R}}_{n.d.}(f,\geqslant n)&:=\widehat{\mathcal{R}}_{n.d.}(f)\cap \widehat{\mathcal{R}}_{n.d.}(\geqslant n),\\
        \widehat{\mathcal{R}}_{n.d.}(f,n)&:=\widehat{\mathcal{R}}_{n.d.}(f)\cap \widehat{\mathcal{R}}_{n.d.}(n).
    \end{align*}
    with objects
    \begin{align*}
        \mathcal{R}_{n.d.}(f,\geqslant n)&:=\mathcal{R}_{n.d.}(f)\cap \mathcal{R}_{n.d.}(\geqslant n),\\
        \mathcal{R}_{n.d.}(f,n)&:=\mathcal{R}_{n.d.}(f)\cap \mathcal{R}_{n.d.}(n).
    \end{align*}
    \end{notn}

    \begin{prop}\label{propofnondegen}
        The collection of special objects
    gives a collection of representatives for connected
    components in $\widehat{\mathcal{R}}_{n.d.}$.

    Moreover, there is a finite filtration on each connected component.
    
    Then we get 
    $$\emptyset=\widehat{\mathcal{R}}_{n.d.}(f,\geqslant |f|+1)\subset \widehat{\mathcal{R}}_{n.d.}(f,\geqslant |f|)\subset\cdots
    \subset \widehat{\mathcal{R}}_{n.d.}(f,\geqslant 0)=\widehat{\mathcal{R}}_{n.d.}(f).$$
    \end{prop}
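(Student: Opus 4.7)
The plan is to deduce this proposition by assembling Proposition~\ref{propofspecial} with Lemma~\ref{lemofnondeg}. First observe that every special object is automatically non-degenerate: in
$$a_{1,n}\cdots a_{k_1,n}+\cdots+a_{k_1+\cdots+k_{l-1}+1,n}\cdots a_{k_1+\cdots+k_{l},n}$$
each variable $a_{k,n}$ occurs in precisely one monomial, so $\partial f/\partial a_{k,n}\neq 0$ for every $k$. Given any non-degenerate $g$, Proposition~\ref{propofspecial} supplies an effective morphism $(f,\phi,g)$ from the special object $f$ of the same type as $g$; since $g$ is non-degenerate, Lemma~\ref{lemofnondeg} forces $\phi$ to be surjective, so this morphism already lies in $\widehat{\mathcal{R}}_{n.d.}$. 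Hence $g$ is connected to $f$ inside $\widehat{\mathcal{R}}_{n.d.}$. Uniqueness is inherited from $\widehat{\mathcal{R}}_{eff}$: two distinct special objects have distinct types and so are not even connected in the larger category.

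For the filtration, the key observation is that the size function $g\mapsto |g|$ is non-increasing along any morphism in $\widehat{\mathcal{R}}_{n.d.}$. Indeed, for any $(g,\psi,h)\in\widehat{\mathcal{R}}_{n.d.}$ both endpoints are non-degenerate, and Lemma~\ref{lemofnondeg} makes $\psi$ surjective, so $|g|\geqslant|h|$. Applied to the canonical morphism from the special $f$ to an arbitrary object $g$ of its component, this gives $|g|\leqslant|f|$. Consequently $\mathcal{R}_{n.d.}(f,\geqslant n)=\emptyset$ for $n>|f|$, while for $n=0$ the set is all of $\mathcal{R}_{n.d.}(f)$ since $|g|\geqslant 0$ always. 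The nested inclusions $\widehat{\mathcal{R}}_{n.d.}(f,\geqslant n+1)\subset\widehat{\mathcal{R}}_{n.d.}(f,\geqslant n)$ are tautological from the definitions, and the filtration has finite length since $n$ ranges over $\{0,1,\ldots,|f|\}$.

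There is no serious obstacle; the content is really about repackaging previously established facts into a filtered form convenient for the monad filtration used later in Section~\ref{Sectiondiffoperad}. The only subtle point is to verify that the morphism constructed in Proposition~\ref{propofspecial} still connects $f$ and $g$ when we restrict to the subcategory $\widehat{\mathcal{R}}_{n.d.}$, i.e.\ that passage to a subcategory does not split what was a single component. This is exactly where Lemma~\ref{lemofnondeg} intervenes, and once it is applied the result is immediate.
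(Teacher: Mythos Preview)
Your argument is correct and matches the paper's own justification, which is given in the paragraph immediately preceding the proposition rather than in a formal proof environment: the paper simply notes that Proposition~\ref{propofspecial} supplies a morphism from the special object of the appropriate type to any non-degenerate $g$, and that the existence of such a morphism forces $|f|\geqslant|g|$ (via Lemma~\ref{lemofnondeg}), from which the filtration statement follows. One small remark: the reason the morphism $(f,\phi,g)$ lies in $\widehat{\mathcal{R}}_{n.d.}$ is simply that both endpoints are non-degenerate and $\widehat{\mathcal{R}}_{n.d.}$ is the \emph{full} subcategory on such objects; surjectivity of $\phi$ is a consequence of Lemma~\ref{lemofnondeg}, not the criterion for membership, so that step is slightly over-argued but harmless.
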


    This filtration plays an essential role in section \ref{Sectiondiffoperad}.

    \subsection{Algebras over different $E_\infty$ ring operads}\label{Sectiondiffoperad}

    We finally prove the Comparison Theorem \ref{thmcompare1}
    in this section. To show the proof, we need to introduce 
    two lemmas first. 
    \begin{lem}\label{lemglue}
        Given the following commutative diagram of spaces
        \begin{center}
            \begin{codi}
                \obj {A &[-1em]   &   &[-1em] B\\[-2em]
                        & A'&B' &\\[-1em]
                        & C'&D' &\\[-2em]
                    C&&&D\\
                };
                \mor A -> B -> D;
                \mor A "i":-> C -> D;
                \mor A' -> B' -> D';
                \mor A' "i'":-> C' -> D';
                \mor A "\alpha":-> A';
                \mor B "\beta":-> B';
                \mor C "\gamma":-> C';
                \mor D "\delta":-> D';
            \end{codi}
        \end{center}
        in which $i$ and $i'$ are cofibrations, $\alpha$, $\beta$, $\gamma$ are 
        equivalences, both the outer and inner squares are 
        push-out squares, then $\delta$ is also an equivalence.
    \end{lem}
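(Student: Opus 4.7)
The plan is to recognize this as a direct instance of the classical gluing lemma for pushouts along Hurewicz cofibrations in $\mathscr{U}$. The two commutative squares in the hypothesis assemble into a ladder of spans with top row $C \xleftarrow{i} A \to B$ and bottom row $C' \xleftarrow{i'} A' \to B'$, connected by vertical maps $\gamma\colon C\to C'$, $\alpha\colon A\to A'$, $\beta\colon B\to B'$. By hypothesis, the horizontal maps $i,i'$ are cofibrations and the three vertical maps are weak equivalences. The pushouts of the top and bottom spans are precisely $D = B\cup_A C$ and $D' = B'\cup_{A'} C'$, and the map induced on pushouts by the triple $(\alpha,\beta,\gamma)$ is exactly $\delta$. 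The gluing lemma, as recorded for example in the appendix of \cite{may1972geometry}, asserts that under these hypotheses the induced map on pushouts is a weak equivalence, which is precisely the desired conclusion.

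If a self-contained proof is preferred, I would proceed as follows. First, replace each comparison equivalence (most importantly $\alpha\colon A\to A'$) by its mapping cylinder factorization $A\hookrightarrow M_\alpha \xrightarrow{\simeq} A'$, and analogously for $\beta$ and $\gamma$ as needed. This does not alter the homotopy type of the pushouts $D$ and $D'$, because pushouts along the cofibrations $i,i'$ preserve homotopy type in the first coordinate and the mapping cylinder replacement does not affect the second. After this reduction all comparison maps may be taken to be cofibrations, and the pushouts agree, up to homotopy, with the corresponding double mapping cylinders. A componentwise weak equivalence between two double mapping cylinders then induces a weak equivalence between them, either by iterated Mayer--Vietoris or by the cofibration-sensitive form of the five lemma applied to the long exact sequences of the relevant pairs.

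The main technical point, and the reason the cofibration hypothesis on $i$ and $i'$ appears explicitly, is that pushouts along general continuous maps need not preserve homotopy type: without the cofibration assumption, the pushout $D$ could fail to present the homotopy pushout of $B \leftarrow A \to C$, and the conclusion would collapse. With $i,i'$ cofibrations, however, both $D$ and $D'$ are genuine homotopy pushouts, and the map between them induced by a triple of equivalences on the three defining corners is automatically an equivalence. No further obstacle arises beyond this bookkeeping of cofibrancy, which the hypothesis supplies directly.
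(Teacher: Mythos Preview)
Your proposal is correct and takes essentially the same approach as the paper: both recognize the statement as the classical gluing lemma for pushouts along cofibrations and defer to a standard reference. The paper's proof factorizes the \emph{horizontal} morphisms $A\to B$, $A'\to B'$ (rather than the comparison maps $\alpha,\beta,\gamma$ as in your alternative sketch) to reduce to the all-cofibration case, and then cites \cite[pp.~80--81]{may1999concise} rather than the appendix of \cite{may1972geometry}; these are minor variations on the same strategy.
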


    \begin{proof}
        We functorially factorize all horizontal morphisms in the 
        above diagram as a composition of an 
        acyclic fibration and a cofibration. Therefore it suffices to prove
        this lemma when all horizontal morphisms are also cofibrations.
        This follows from \cite[Pages 80-81]{may1999concise}.
    \end{proof}

    The next lemma is also a fundamental result 
    in the theory of model category, see \cite{Lellig1973}.

    \begin{lem}\label{lemofunion}
        Given the following pull-back diagram of spaces
        \begin{center}
            \begin{codi}
                \obj {A & B\\[-2em]
                    C & X\\
                };
                \mor A -> B -> X;
                \mor A -> C -> X;
            \end{codi}
        \end{center}
        in which morphisms $B\to X$, $C\to X$, and the composition
        $A\to X$ are cofibrations, then so is the 
        universal morphism $$D:=B\coprod_{A} C\to X.$$
    \end{lem}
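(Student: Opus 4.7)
The plan is to reduce the lemma to Lillig's union theorem for cofibrations by identifying the pushout $D$ with the topological union $B\cup C\subset X$ and then invoking the classical combinatorial result on unions of NDR--pairs.

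First, I would observe that in the ground category $\mathscr{U}$, the cofibrations $B\to X$ and $C\to X$ are closed embeddings, so $B$ and $C$ identify with closed subspaces of $X$. Since the square is a pull-back of spaces, $A$ identifies with the intersection $B\cap C$. The canonical continuous bijection $D=B\coprod_A C\to B\cup C$ then admits a continuous inverse: the inclusions $B\hookrightarrow B\cup C$ and $C\hookrightarrow B\cup C$ agree on $A=B\cap C$, so by the universal property of the pushout they induce a continuous map $B\cup C\to D$ inverse to the canonical one. Hence the map $D\to X$ is homeomorphic over $X$ to the inclusion $B\cup C\hookrightarrow X$, and it suffices to show that this inclusion is a cofibration.

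Second, I would establish that $B\cup C\hookrightarrow X$ is a cofibration by invoking Lillig's union theorem in the precise form: if $B$ and $C$ are closed subspaces of $X$ whose inclusions are cofibrations, and if the inclusion of $B\cap C$ into $X$ is also a cofibration, then $B\cup C\hookrightarrow X$ is a cofibration. All three hypotheses are built into the setup, since we are told that the composite $A\to X$ is a cofibration and we have identified $A=B\cap C$ in the previous step.

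The main obstacle, namely producing explicit NDR data for the pair $(X,B\cup C)$ from the given NDR data for $(X,A)$, $(X,B)$, and $(X,C)$, is precisely the content of \cite{Lellig1973}. There Lillig assembles the combined structure by a delicate interpolation on the overlap locus $\{u_B=u_C\}$ in which the NDR data for $A$ are used essentially to glue the partial deformations along $h_B$ and $h_C$ continuously across that locus; the hypothesis that $A\hookrightarrow X$ is itself a cofibration (and not merely the intersection of two cofibrations) is exactly what makes the gluing possible. Since the lemma statement explicitly refers to \cite{Lellig1973}, I would invoke that result rather than reproduce the construction.
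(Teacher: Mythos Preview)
Your approach is exactly what the paper intends: the lemma is stated without proof and simply attributed to Lillig's union theorem \cite{Lellig1973}, so reducing the pushout $D$ to the union $B\cup C\subset X$ and invoking that result is the right move.

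One small slip: in your first step you write that the inclusions $B\hookrightarrow B\cup C$ and $C\hookrightarrow B\cup C$ induce, \emph{by the universal property of the pushout}, a continuous map $B\cup C\to D$. The universal property of $D$ runs the other way---it produces maps \emph{out of} $D$, and in fact the map it yields here is the canonical $D\to B\cup C$ itself, not its inverse. The correct justification for continuity of the inverse is the pasting lemma: $B$ and $C$ are closed in $B\cup C$ (being closed in $X$), so a function on $B\cup C$ is continuous as soon as its restrictions to $B$ and to $C$ are, and those restrictions are the structure maps $B\to D$ and $C\to D$. With that fix your argument is complete.
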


    Applying these lemmas, we get the following property about $E_\infty$ ring operads.

    \begin{prop}
        Let $\mathscr{C}$ be an $E_\infty$ ring operad, $f$ be a special object, 
        and $g\in\mathcal{R}(f)$. 
        Let $\{\phi_i(\mathscr{C}(h_i))\}_{i=1}^N$ be a finite collection
        of distinct sub-spaces in $\mathscr{C}(g)$ where $(h_i,\phi_i,g)$
        is a morphism in $\widehat{\mathcal{R}}_{n.d.}$.
        Then both $\bigcap_{i=1}^N\phi_i(\mathscr{C}(h_i))$
        and $\bigcup_{i=1}^N\phi_i(\mathscr{C}(h_i))$ are contractible
        and their inclusions into $\mathscr{C}(g)$ are cofibrations.
    \end{prop}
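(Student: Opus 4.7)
The plan is to prove both assertions by simultaneous induction on $N$, strengthening the hypothesis so that the $N$-fold intersection $\bigcap_{i=1}^{N}\phi_{i*}(\mathscr{C}(h_i))$ is itself always the image $\phi_{*}(\mathscr{C}(h))$ of some $(h,\phi,g)\in\widehat{\mathcal{R}}_{n.d.}$, or else empty. The base case $N=1$ is immediate: condition (1) of Definition \ref{defnEinfty} makes $\mathscr{C}(h_1)$ contractible, and condition (5) makes $\phi_{1*}$ a cofibration, so the image is contractible and cofibrantly embedded in $\mathscr{C}(g)$.

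The heart of the induction is the following pullback lemma. For any two effective morphisms $(h_1,\phi_1,g),(h_2,\phi_2,g)$ in $\widehat{\mathcal{R}}_{n.d.}$, either their images in $\mathscr{C}(g)$ meet only trivially, or there is a canonical $(h_3,\phi_3,g)\in\widehat{\mathcal{R}}_{n.d.}$ together with factorizations $\phi_3=\phi_1\psi_1=\phi_2\psi_2$ such that the intersection equals $\phi_{3*}(\mathscr{C}(h_3))$. To build $h_3$, take the pullback of $\phi_1,\phi_2$ at the level of underlying based sets in $\mathbf{Set}_e$, and equip it with the polynomial structure whose monomials correspond bijectively to pairs of monomials in $h_1,h_2$ that become equal under $\phi_{1*},\phi_{2*}$. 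A combinatorial check using Lemma \ref{lemofnondeg} shows $h_3\in\coprod\mathcal{R}(n)$ and is non-degenerate. Identifying the intersection with $\phi_{3*}(\mathscr{C}(h_3))$ then uses condition (3) of Definition \ref{defnEinfty} to produce, for each intersection point, a preimage $\beta\in\mathscr{C}(k)$ with $k$ non-degenerate, and condition (4) together with the universal property of the fiber product to factor $k$ uniquely through $h_3$.

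Granted the pullback lemma, set $A=\bigcup_{i=1}^{N-1}\phi_{i*}(\mathscr{C}(h_i))$ and $B=\phi_{N*}(\mathscr{C}(h_N))$. Then $A\cap B=\bigcup_{i=1}^{N-1}\bigl(\phi_{i*}(\mathscr{C}(h_i))\cap B\bigr)$ is a union of at most $N-1$ images of the required form, so by the inductive hypothesis applied to this smaller collection, $A\cap B$ is contractible and cofibrantly embedded in $\mathscr{C}(g)$, and a fortiori in $A$ and $B$. Lemma \ref{lemglue} then gives contractibility of $A\cup B=\bigcup_{i=1}^{N}\phi_{i*}(\mathscr{C}(h_i))$, and Lemma \ref{lemofunion} gives that $A\cup B\hookrightarrow\mathscr{C}(g)$ is a cofibration. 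For the full $N$-fold intersection, iterating the pullback lemma produces a single $(h,\phi,g)\in\widehat{\mathcal{R}}_{n.d.}$ whose image is the intersection, which is therefore contractible and cofibrantly embedded by conditions (1) and (5).

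The main obstacle is the pullback lemma itself: verifying that the fiber-product polynomial $h_3$ is genuinely non-degenerate (so that condition (5) applies to the composite $\phi_3$) and, more importantly, that the intersection equals exactly $\phi_{3*}(\mathscr{C}(h_3))$ rather than merely a union of images from several smaller non-degenerate objects. The inclusion $\phi_{3*}(\mathscr{C}(h_3))\subseteq\phi_{1*}(\mathscr{C}(h_1))\cap\phi_{2*}(\mathscr{C}(h_2))$ is immediate from the construction; the reverse inclusion requires delicately combining condition (3), which produces only some preimage space $\mathscr{C}(k)$, with condition (4) and the universal property of the set-theoretic fiber product to obtain a morphism $k\to h_3$ in $\widehat{\mathcal{R}}_{n.d.}$. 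A secondary technical point is the passage from cofibrations into $\mathscr{C}(g)$ to cofibrations between the relevant subspaces, which should follow from the closed-NDR character of Hurewicz cofibrations since all subspaces in question are closed.
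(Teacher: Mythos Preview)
Your pullback lemma, as stated, is false: the set-level fiber product $P$ of $\phi_1,\phi_2$ equipped with the polynomial you describe is \emph{not} in general non-degenerate. For a concrete counterexample, take $g=a_1a_2+a_1a_3$, $h_1=h_2=a_1a_2+a_3a_4$ with $\phi_1\colon(1,2,3,4)\mapsto(1,2,1,3)$ and $\phi_2\colon(1,2,3,4)\mapsto(2,1,1,3)$. The fiber product $P$ has six elements, but the polynomial $h_3$ built from matched monomials uses only four of them, so $h_3$ is degenerate on $P$. The fix is to pass to the non-degenerate core (the subset of variables actually appearing in $h_3$); one can then check, using the monomial bijections of Lemma~\ref{type}, that any $k$ produced by condition~(3) factors through this core. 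But this is genuine work, not the ``combinatorial check using Lemma~\ref{lemofnondeg}'' you assert, and you flag it only later as an obstacle.

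The more serious structural gap is the step where you need $A\cap B\hookrightarrow B$ to be a cofibration in order to apply Lemma~\ref{lemglue}. Your induction on $N$ gives only that $A\cap B\hookrightarrow\mathscr{C}(g)$ is a cofibration, and the inference ``a fortiori in $A$ and $B$'' does not follow from NDR-pair formalities: if $(X,C)$ and $(X,B)$ are NDR pairs with $C\subseteq B$, it is not automatic that $(B,C)$ is one, since the deformation of $X$ need not preserve $B$. The paper handles exactly this point by running an \emph{outer} induction on $|g|$: the images $\phi_i(\mathscr{C}(h_i))\cap B$ all factor through $h_N$, and since $|h_N|>|g|$ (after disposing of the trivial case $|h_N|=|g|$), one may invoke the proposition with $g$ replaced by $h_N$ to get the required cofibration into $\mathscr{C}(h_N)$, hence into $B$. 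Your single induction on $N$ has no mechanism for this; you would need to add a second inductive parameter. Once you do, your argument becomes close in spirit to the paper's double induction on $|g|$ and $\min\{|h_i|\}$, with your pullback lemma playing the role of the paper's observation that pairwise intersections come from strictly larger $|h'_{ij}|$.
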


    \begin{proof}
        We prove this by induction first on $|g|$ and then on $\min\{|h_i|\}$.

        The induction begins with $\min\{|h_i|\}=|f|$.
        Note that for $h_i\in\mathcal{R}(f)_{n.d.}$ with 
        $|h_i|=|f|$, there must be some $\sigma_i\in\Sigma_{|f|}$
        such that $h_i=\sigma_{i*}f$. 
        $\sigma_{i*}:\mathscr{C}(f)\to \mathscr{C}(h_i)$ are homeomorphisms,
        so without lost of generality, we assume all $h_i=f$.

        By Lemma \ref{lemofspecialpullback}, two morphisms 
        $(f,\phi_i,g)$ and $(f,\phi_j,g)$ only differ by an 
        automorphism of $f$, which induces a homeomorphism on $\mathscr{C}(f)$.
        So $\phi_i(\mathscr{C}(h_i))$ are all the same space in this case. 
        Therefore, the proposition holds by condition (5) in Definition 
        \ref{defnEinfty} when $\min\{|h_i|\}=|f|$, and hence when $|g|=|f|$.

        Now we assume the proposition holds when $|g'|>|g|$ and when 
        $|g'|=|g|$ with $\min\{|h'_i|\}>\min\{|h'_i|\}$.
        There is nothing to prove when $N=1$ since it follows from 
        condition (5).

        When $N\geqslant 2$, for any $\alpha\in \phi_i(\mathscr{C}(h_i))\cap\phi_j(\mathscr{C}(h_j))$
        for $i\neq j$, say $\alpha=\phi_{i*}\alpha_1=\phi_{j*}\alpha_j$.
        By condition (3) in Definition \ref{defnEinfty}, 
        there exists $h'_{ij}$ with morphisms $(h'_{ij},\psi_i,h_i)$
        $(h'_{ij},\psi_j,h_j)$ and $\beta\in\mathscr{C}(h'_{ij})$ 
        such that $\psi_{i*}\beta=\alpha_i$, $\psi_{j*}\beta=\alpha_j$.
        By condition (4), $\phi_i\psi_{i}=\phi_j\psi_{j}=:\phi_{ij}$.
        By Lemma \ref{lemofnondeg}, $|h'_{ij}|\geqslant \max\{|h_i|,|h_j|\}$.
        If $|h'_{ij}|=|h_i|=|h_j|$, then both $\psi_{i},\psi_{j}$ are invertible,
        so 
        $$\phi_i(\mathscr{C}(h_i))=\phi_{ij}(\mathscr{C}(h'_{ij}))=\phi_j(\mathscr{C}(h_j)),$$
        which contradicts with $i\neq j$.

        Therefore, $|h'_{ij}|> \min\{|h_i|,|h_j|\}$, so 
        $$\bigcap_{i=1}^N\phi_i(\mathscr{C}(h_i))=\bigcap_{i,j}\phi_{ij}(\mathscr{C}(h'_{ij}))$$
        with $\min\{|h'_{ij}|\}> \min\{|h_{i}|\}$. Hence,
        $\bigcap_{i=1}^N\phi_i(\mathscr{C}(h_i))$ 
        is contractible
        and its inclusion into $\mathscr{C}(g)$ is a cofibration.

        To prove the union part, we further assume that 
        $|g|>\min\{|h_i|\}$. Otherwise, if $|h_i|=|g|$ for some $i$
        then $\phi_i$ is invertible, so $\phi_{i*}:\mathscr{C}(h_i)\to \mathscr{C}(g)$
        is a homeomorphism. Thus the union 
        $\bigcup_{i=1}^N\phi_i(\mathscr{C}(h_i))$ is the whole space 
        $\mathscr{C}(g)$ and the proposition holds.

        Consider the following commutative diagram
        \begin{center}
            \begin{codi}
                \obj {|(1)|\phi_k(\mathscr{C}(h_k))\cap\biggl(\bigcup_{i=1}^{k-1}\phi_i(\mathscr{C}(h_i))\biggr) 
                &[10em] |(2)|\bigcup_{i=1}^{k-1}\phi_i(\mathscr{C}(h_i))\\
                |(3)|\phi_k(\mathscr{C}(h_k)) 
                & |(4)|\bigcup_{i=1}^{k}\phi_i(\mathscr{C}(h_i))\\
                };
                \mor 1 -> 2 -> 4;
                \mor 1 -> 3 -> 4;
            \end{codi}
        \end{center}
        This diagram is both a pull-back and a push-out diagram.
        Here 
        \begin{align*}
            &\phi_k(\mathscr{C}(h_k))\cap\biggl(\bigcup_{i=1}^{k-1}\phi_i(\mathscr{C}(h_i))\biggr)\\
            =&\bigcup_{i=1}^{k-1}(\phi_k(\mathscr{C}(h_k))\cap\phi_i(\mathscr{C}(h_i)))
        \end{align*}
        and $\phi_k(\mathscr{C}(h_k))\cap\phi_i(\mathscr{C}(h_i))$
        is a union of some $\bigcap_{i,j}\phi_{ij}(\mathscr{C}(h'_{ij}))$
        with $\min\{|h'_{ij}|\}>|g|$. Then the proposition holds for 
        $\phi_k(\mathscr{C}(h_k))\cap\biggl(\bigcup_{i=1}^{k-1}\phi_i(\mathscr{C}(h_i))\biggr)$.

        The left arrow in the above diagram is a cofibration
        by applying the inductive hypothesis to $g=h_k$ (note that $|h_k|>|g|$).
        Therefore, the proposition follows by applying
        Lemma \ref{lemglue} and \ref{lemofunion} inductively on the above diagram.
    \end{proof}

    In particular, we have the following corollary.
    \begin{cor}
        Let $\mathscr{C}$ be an $E_\infty$ ring operad.
        For a special object $f$ and $n\geqslant0$, let $L(f,n)$
        be the subspace 
        $$L(f,n)\subset \coprod_{g\in\mathcal{R}_{n.d.}(f,n)}\mathscr{C}(g),$$
        such that $\alpha\in \mathscr{C}(g)$ belongs to $L(f,n)$
        if and only if there exists 
        $$h\in \mathcal{R}_{n.d.}(f,\geqslant n+1)$$
        with morphism $(h,\phi,g)$ in $\widehat{\mathcal{R}}_{n.d.}$
        and $\beta\in\mathscr{C}(h)$
        such that $\phi_*\beta=\alpha$.
        Then each $L(f,n)\cap \mathscr{C}(g)$ is contractible
        for any $g\in\mathcal{R}_{n.d.}(f)$ and 
        the inclusion $L(f,n)\subset \coprod_{g\in\mathcal{R}_{n.d.}(f,n)}\mathscr{C}(g)$
        is a cofibration.
    \end{cor}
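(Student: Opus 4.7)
The plan is to derive this corollary directly from the preceding proposition, applied to a carefully chosen finite family of subspaces in each $\mathscr{C}(g)$. First I would fix $g \in \mathcal{R}_{n.d.}(f,n)$ and unwind the definition of $L(f,n)$: an element $\alpha \in \mathscr{C}(g)$ lies in $L(f,n)$ precisely when there exists some morphism $(h,\phi,g) \in \widehat{\mathcal{R}}_{n.d.}$ with $h \in \mathcal{R}_{n.d.}(f,\geqslant n+1)$ and some $\beta \in \mathscr{C}(h)$ with $\phi_*\beta = \alpha$. Equivalently,
\[
L(f,n) \cap \mathscr{C}(g) \;=\; \bigcup_{(h,\phi,g)} \phi_*(\mathscr{C}(h)),
\]
where the union is taken over all such morphisms.

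The next step is to verify that this is a finite union. Since each $\mathcal{R}(k)$ has cardinality $2^{2^k-1}$ and Lemma \ref{lemofnondeg} forces $|h| \leqslant |f|$ for every $h \in \mathcal{R}_{n.d.}(f)$, only finitely many objects $h$ can occur, and for each such $h$ only finitely many morphisms $(h,\phi,g)$ exist. After discarding duplicates among the subspaces $\phi_*(\mathscr{C}(h))$, I am left with exactly the finite collection of distinct non-degenerate subspaces to which the preceding proposition applies. Its conclusion then gives that $L(f,n) \cap \mathscr{C}(g)$ is contractible and that its inclusion into $\mathscr{C}(g)$ is a cofibration.

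To finish, the total inclusion $L(f,n) \hookrightarrow \coprod_{g \in \mathcal{R}_{n.d.}(f,n)} \mathscr{C}(g)$ decomposes as the coproduct over $g$ of the inclusions $L(f,n) \cap \mathscr{C}(g) \hookrightarrow \mathscr{C}(g)$ already shown to be cofibrations, and a coproduct of cofibrations between coproducts of spaces is again a cofibration. I do not anticipate any serious obstacle here: all the genuine homotopy-theoretic input (that the intersections and unions of the images $\phi_*(\mathscr{C}(h))$ are contractible and cofibrantly embedded, via Lemmas \ref{lemglue} and \ref{lemofunion} together with conditions (3), (4), (5) of Definition \ref{defnEinfty}) has already been packaged into the preceding proposition, so this corollary is essentially a bookkeeping application of it together with the finiteness of $\mathcal{R}_{n.d.}(f)$.
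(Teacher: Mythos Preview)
Your proposal is correct and follows exactly the route the paper intends: the paper gives no explicit argument beyond ``In particular, we have the following corollary,'' and you have correctly supplied the details by expressing $L(f,n)\cap\mathscr{C}(g)$ as a finite union of images $\phi_*(\mathscr{C}(h))$ and invoking the preceding proposition, then passing to the coproduct over $g$. One small point worth noting (which the paper also glosses over): the collection must be nonempty for the preceding proposition to apply, and this is guaranteed whenever $n<|f|$ since the special object $f$ itself maps to every $g\in\mathcal{R}_{n.d.}(f,n)$; the only use of the corollary (in Lemma~\ref{lemonCX}) is precisely in this range.
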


    Now let $\mathscr{C}$ be an $E_\infty$ ring operad and we focus on 
    the associated monad $\mathbb{C}$. Our argument here is closely related
    to \cite[Appendix]{May_1974}, but is much more complicated. The following
    lemma gives a strict description of the
    interpretation that only those spaces indexed by non-degenerate
    objects provide essential information.
    \begin{lem}\label{lemofndhomeo}
        If $\mathscr{C}$ is an $E_\infty$ ring operad, then the 
        canonical map
        \begin{align*}
            i:\mathscr{C}(\bullet)\times_{\widehat{\mathcal{R}}_{n.d.}^{op}}X^\bullet \to \mathscr{C}(\bullet)\times_{\widehat{\mathcal{R}}_{eff}^{op}}X^\bullet=:\mathbb{C}_{\mathscr{U}}X
        \end{align*}
        is a homeomorphism.
    \end{lem}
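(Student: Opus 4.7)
The plan is to construct an explicit continuous two-sided inverse $j$ to $i$, built from the canonical factorization of each object of $\widehat{\mathcal{R}}_{eff}$ through its non-degenerate ``core''. For each $g\in\mathrm{Obj}(\widehat{\mathcal{R}}_{eff})$, let $S_g\subseteq\{1,\ldots,|g|\}$ be the set of indices $k$ with $\partial g/\partial a_{k,|g|}\neq 0$, and let $\iota_g:\mathbf{|S_g|}_e\hookrightarrow\mathbf{|g|}_e$ be the effective injection sending the $k$th element of $S_g$ in order to itself. By renaming the active variables we obtain a unique non-degenerate $g_S\in\mathcal{R}(|S_g|)$ with $(\iota_g)_* g_S=g$; when $g$ is already non-degenerate, $\iota_g=\mathrm{id}$ and $g_S=g$. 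Since $\iota_g$ is injective, condition (2) of Definition \ref{defnEinfty} ensures that $(\iota_g)_*:\mathscr{C}(g_S)\to\mathscr{C}(g)$ is a homeomorphism.

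Next I would define the continuous map
\[
\tilde j:\coprod_{g}\mathscr{C}(g)\times X^{|g|}\longrightarrow \mathscr{C}(\bullet)\times_{\widehat{\mathcal{R}}_{n.d.}^{op}}X^\bullet,\qquad (\alpha,x)\longmapsto\bigl[\bigl((\iota_g)_*^{-1}\alpha,\,\iota_g^*x\bigr)\bigr],
\]
and verify that it descends through the coend relations for $\widehat{\mathcal{R}}_{eff}^{op}$ to the desired $j$. The content is checking that for every effective morphism $(g',\phi,g)$, $\alpha'\in\mathscr{C}(g')$, $x\in X^{|g|}$, the generating identification $(\phi_*\alpha',x)\sim(\alpha',\phi^*x)$ lands on coend-equivalent elements. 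The key combinatorial fact is the identity $S_g=\phi(S_{g'})\setminus\{e\}$ (which uses that $\phi$ is effective and that $\mathcal{R}$-polynomials have non-negative coefficients, so no active variable of $g'$ is cancelled in $g=\phi_*g'$); this forces $\phi\circ\iota_{g'}$ to land in $\mathrm{Im}(\iota_g)$ and thereby factor uniquely as $\iota_g\circ\pi$ for an effective surjection $\pi:\mathbf{|S_{g'}|}_e\twoheadrightarrow\mathbf{|S_g|}_e$. By Lemma \ref{lemofnondeg}, $(g'_S,\pi,g_S)$ is then a morphism in $\widehat{\mathcal{R}}_{n.d.}$, and the functorial identities $(\iota_g)_*\pi_*=\phi_*(\iota_{g'})_*$ on $\mathscr{C}$ and $\iota_{g'}^*\phi^*=\pi^*\iota_g^*$ on $X^\bullet$, combined with invertibility of $(\iota_g)_*$, show that both $\tilde j(\phi_*\alpha',x)$ and $\tilde j(\alpha',\phi^*x)$ are identified via the $\pi$-relation in the non-degenerate coend.

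Finally $i\circ j=\mathrm{id}$ follows from the defining $\iota_g$-relation in the $\widehat{\mathcal{R}}_{eff}^{op}$ coend: the class $\bigl[((\iota_g)_*^{-1}\alpha,\,\iota_g^*x)\bigr]$ is re-identified with $[(\alpha,x)]$ after pushing forward along $\iota_g$. The opposite composition $j\circ i=\mathrm{id}$ is immediate since $\iota_g=\mathrm{id}$ on non-degenerate $g$. Continuity of $j$ is automatic once continuity of $\tilde j$ on each component is granted, and the latter holds because $(\iota_g)_*^{-1}$ is a homeomorphism and the coend projection is continuous. The main obstacle is the middle step: verifying that the factorization $\phi\iota_{g'}=\iota_g\pi$ exists with $\pi$ both effective and surjective, and that the two functorial identities above commute; this reduces to careful bookkeeping of which indices are active at each stage.
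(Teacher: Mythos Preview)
Your proof is correct and follows essentially the same strategy as the paper's: both construct an explicit inverse $j$ by sending each pair $(\alpha,x)\in\mathscr{C}(g)\times X^{|g|}$ to a representative over a non-degenerate object mapping into $g$ by an effective injection, using condition~(2) of Definition~\ref{defnEinfty} to invert the resulting homeomorphism on $\mathscr{C}$-values. The only cosmetic difference is that the paper makes an arbitrary choice of such a pair $(h_f,\psi_f)$ for each degenerate $f$ (obtained by factoring a morphism from the special object), whereas you single out the canonical core $(g_S,\iota_g)$ determined by the set of active variables; this makes your well-definedness check slightly more uniform (you handle all effective $\phi$ at once via the factorization $\phi\iota_{g'}=\iota_g\pi$, while the paper first treats the case ``source non-degenerate'' and then reduces the general case to it), but the content is identical.
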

    \begin{proof}
        By definition of coend, 
        \begin{align*}
            \mathscr{C}(\bullet)\times_{\widehat{\mathcal{R}}_{eff}^{op}}X^\bullet=
            \coprod_{f\in\mathcal{R}(n),\ n\geqslant 0} \mathscr{C}(f)\times X^n/(\sim )
        \end{align*}
        where $(\sim)$ is generated by 
        $(\alpha,\phi^*\mathbf{x})\sim (\phi_*\alpha,\mathbf{x})$
        for all effective morphisms $(f,\phi,g)$ with $\alpha\in\mathscr{C}(f)$
        and $\mathbf{x}\in X^{|g|}$.

        We define an inverse $j$ of $i$ as follows.
        For any $(\alpha,\mathbf{x})\in \mathscr{C}(f)\times X^{|f|}$
        if $f$ is non-degenerate, we define $j(\alpha,\mathbf{x})$
        to be the quotient of $(\alpha,\mathbf{x})$ in 
        $\mathscr{C}(\bullet)\times_{\widehat{\mathcal{R}}_{eff}^{op}}X^\bullet$.

        When $f$ is degenerate, then we pick an effective morphism
        $(g,\phi,f)$ such that $g$ is special by Proposition \ref{propofspecial}.
        We further decomposite $\phi=\psi\circ p$ such that $\psi$ is injective
        and $p$ is surjective. Then by Lemma \ref{lemofinj}, 
        $\psi_*p_*g\in\mathrm{Obj}(\widehat{\mathcal{R}})$ and $\psi$ injective implies
        $p_*g\in\mathrm{Obj}(\widehat{\mathcal{R}})$. 
        Furthermore, $p$ surjective and $g$ non-degenerate implies $p_*g$ non-degenerate.
        Therefore, for any degenerate $f$, there exists a non-degenerate
        object $p_*g$ and a morphism $(p_*g,\psi,f)$ in which $\psi$ is injective.

        Note that the pair $(p_*g, \psi)$ is not necessarily unique, so we just 
        choose such a pair for each degenerate $f$ and denote it by 
        $(h_f,\psi_f)$. By assumption, $\mathscr{C}$ is $E_\infty$, so 
        $\psi_{f*}:\mathscr{C}(h_f)\to \mathscr{C}(f)$ is a homeomorphism.
        Then we define $j(\alpha,\mathbf{x}):=((\psi_{f*})^{-1}\alpha,\psi_{f}^*\mathbf{x})$.

        To check $j:\mathscr{C}(\bullet)\times_{\widehat{\mathcal{R}}_{eff}^{op}}X^\bullet\to 
        \mathscr{C}(\bullet)\times_{\widehat{\mathcal{R}}_{n.d.}^{op}}X^\bullet$
        is well-defined, it suffices to check 
        $j(\alpha,\phi^*\mathbf{x})= j(\phi_*\alpha,\mathbf{x})$
        for all effective morphisms $(f,\phi,g)$ with $\alpha\in\mathscr{C}(f)$
        and $\mathbf{x}\in X^{|g|}$.

        This holds by definition when both $f$ and $g$ are non-degenerate.
        If $f$ is non-degenerate but $g$ is degenerate, we get 
        $\phi_*\alpha=\psi_{g*}(\psi_{g*})^{-1}\phi_*\alpha$. 
        
        By definition, $\mathrm{Im}(\phi)$ is contained in $\mathrm{Im}(\phi_g)$,
        Moreover, $(f,\phi,g)$ factors through 
        $(f,\phi,g)=(h_g,\phi_g,g)\circ(f,\tilde{\phi},h_g)$.

        Therefore, 
        \begin{align*}
            j(\alpha,\phi^*\mathbf{x})=&j(\alpha,(\phi_g\tilde{\phi})^*\mathbf{x})\\
            =&j(\alpha,\tilde{\phi}^*\phi_g^*\mathbf{x})\\
            =&j(\tilde{\phi}_*\alpha,\phi_g^*\mathbf{x})\\
            =&j((\phi_g^{-1})_*\phi_{g*}\tilde{\phi}_*\alpha,\phi_g^*\mathbf{x})\\
            =&j((\psi_{g*})^{-1}\phi_*\alpha,\psi_{g}^*\mathbf{x})\\
            =&j(\phi_*\alpha,\mathbf{x}).
        \end{align*}

        The case when $f$ is degenerate can be reduced to the above cases
        by composing with $\phi_f$.

        Therefore, $j$ is well-defined. By definition of $j$, it follows that 
        both $i\circ j$ and $j\circ i$ are identities, so $i$ is a homeomorphism.        
    \end{proof}

    In the rest of this section, we define 
    $\mathbb{C}_{\mathscr{U}}X$ to be 
    $\mathscr{C}(\bullet)\times_{\widehat{\mathcal{R}}_{n.d.}^{op}}X^\bullet$.

    We apply the filtration defined in Proposition \ref{propofnondegen} to get 
    a filtration on $\mathbb{C}_{\mathscr{U}}X$.

    \begin{lem}\label{lemonCX}
        Let $\mathscr{C}$ be an $E_\infty$ ring operad with associated monad $\mathbb{C}_{\mathscr{U}}$.
        Then 
        $$\mathbb{C}_{\mathscr{U}}X=\coprod_{f \text{ special}}\mathscr{C}(\bullet)\times_{\widehat{\mathcal{R}}_{n.d.}(f)^{op}}X^\bullet$$
        Moreover, for each $f$ special and 
        $n=0,1,\cdots,|f|-1$ there is a push-out diagram 
        \begin{center}
            \begin{codi}
                \obj {|(0)|L(f,n)\times_{\Sigma_n^{op}}X^n
                &[12em]|(1')| \mathscr{C}(\bullet)\times_{\widehat{\mathcal{R}}_{n.d.}(f,\geqslant n+1)^{op}}X^\bullet\\
                |(1)| \biggl(\displaystyle\coprod_{g\in\mathcal{R}_{n.d.}(f,n)}\mathscr{C}(g)\biggr)\times_{\Sigma_n^{op}}X^n&
                |(2)| \mathscr{C}(\bullet)\times_{\widehat{\mathcal{R}}_{n.d.}(f,\geqslant n)^{op}}X^\bullet\\
                };
                \mor 0 "e":-> 1';
                \mor 0 -> 1;
                \mor 1 -> 2;
                \mor 1' -> 2;
            \end{codi}
        \end{center}
    \end{lem}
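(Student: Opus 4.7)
The first assertion follows from the general fact that coends commute with disjoint unions of the indexing category. By Proposition \ref{propofnondegen}, the subcategory $\widehat{\mathcal{R}}_{n.d.}$ is the disjoint union of its connected components $\widehat{\mathcal{R}}_{n.d.}(f)$ as $f$ ranges over special objects; combining this with Lemma \ref{lemofndhomeo}, which identifies $\mathbb{C}_{\mathscr{U}}X$ with the coend over $\widehat{\mathcal{R}}_{n.d.}^{op}$, yields the claimed coproduct decomposition.

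For the push-out, my plan is to realize the coend at level $\geqslant n$ explicitly as the quotient of $\coprod_{g\in\mathcal{R}_{n.d.}(f,\geqslant n)}\mathscr{C}(g)\times X^{|g|}$ by the relation $(\alpha,\phi^*\mathbf{x})\sim(\phi_*\alpha,\mathbf{x})$ generated over morphisms in the subcategory, and then stratify the generating relations by source/target size. By Lemma \ref{lemofnondeg}, every morphism in $\widehat{\mathcal{R}}_{n.d.}$ between two objects of the same size $n$ is induced by an element of $\Sigma_n$, so within level $n$ the quotient produces $\bigl(\coprod_{g\in\mathcal{R}_{n.d.}(f,n)}\mathscr{C}(g)\bigr)\times_{\Sigma_n^{op}}X^n$, while the relations internal to level $\geqslant n+1$ produce $\mathscr{C}(\bullet)\times_{\widehat{\mathcal{R}}_{n.d.}(f,\geqslant n+1)^{op}}X^\bullet$. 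The only remaining generators are morphisms $(h,\phi,g)$ with $|h|\geqslant n+1>n=|g|$; these identify a level-$n$ class $(\phi_*\beta,\mathbf{x})$ with the level-$\geqslant n+1$ class of $(\beta,\phi^*\mathbf{x})$, and by definition the set of $\phi_*\beta$ arising this way is exactly $L(f,n)\cap\mathscr{C}(g)$. The map $e$ in the diagram sends $[\alpha,\mathbf{x}]\in L(f,n)\times_{\Sigma_n^{op}}X^n$ with $\alpha=\phi_*\beta$, $\beta\in\mathscr{C}(h)$, $|h|\geqslant n+1$, to the class of $(\beta,\phi^*\mathbf{x})$ in the level-$\geqslant n+1$ coend.

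The main obstacle is the well-definedness of $e$, independent of the chosen lift $(h,\phi,\beta)$. Given two lifts $(h_1,\phi_1,\beta_1)$ and $(h_2,\phi_2,\beta_2)$ with $\phi_{1*}\beta_1=\phi_{2*}\beta_2=\alpha$, condition (3) in Definition \ref{defnEinfty} supplies a non-degenerate $k$, morphisms $(k,\psi_i,h_i)$ in $\widehat{\mathcal{R}}_{n.d.}$, and $\gamma\in\mathscr{C}(k)$ with $\psi_{i*}\gamma=\beta_i$. Then $(\phi_1\psi_1)_*\gamma=(\phi_2\psi_2)_*\gamma$, so condition (4) forces $\phi_1\psi_1=\phi_2\psi_2$, after which the coend relations internal to level $\geqslant n+1$ give the chain
$$[\beta_1,\phi_1^*\mathbf{x}] \,=\, [\gamma,(\phi_1\psi_1)^*\mathbf{x}] \,=\, [\gamma,(\phi_2\psi_2)^*\mathbf{x}] \,=\, [\beta_2,\phi_2^*\mathbf{x}].$$
Compatibility with the $\Sigma_n$-action is checked analogously. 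Once $e$ is well-defined, the diagram is a push-out by direct verification of the universal property: the level-$\geqslant n$ coend is exhibited as the gluing of the level-$n$ and level-$\geqslant n+1$ pieces along exactly the relations that connect them, and both corners on the left together surject onto the underlying coproduct indexing the coend.
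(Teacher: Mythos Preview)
Your proposal is correct and follows essentially the same route as the paper: define the attaching map $e$ by choosing a lift $(h,\phi,\beta)$ of $\alpha$, then verify independence of the lift via conditions (3) and (4) of Definition~\ref{defnEinfty}, and conclude the push-out from the explicit quotient description of the coend. The one step you leave implicit is that the common refinement $k$ produced by condition (3) lies at level $\geqslant n+1$; this follows from Lemma~\ref{lemofnondeg} since $|k|\geqslant|h_i|\geqslant n+1$, and is needed for your chain of equalities to use only relations internal to $\widehat{\mathcal{R}}_{n.d.}(f,\geqslant n+1)$.
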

    \begin{proof}
        The attaching map $e$ is defined as follows.
        \begin{align*}
            e: L(f,n)\times_{\Sigma_n^{op}}X^n&\to \mathscr{C}(\bullet)\times_{\widehat{\mathcal{R}}_{n.d.}(f,\geqslant n+1)^{op}}X^\bullet\\
            (\phi_*\beta, \mathbf{X})&\mapsto (\beta,\phi^{*}\mathbf{X})
        \end{align*}
        for some morphism $(h,\phi,g)$ in $\widehat{\mathcal{R}}_{n.d.}$
        and $\beta\in\mathscr{C}(h)$ with $h\in\mathcal{R}_{n.d.}(f,\geqslant n+1)$,
        $g\in\mathcal{R}_{n.d.}(f,n)$.

        This attaching map $e$ is well-defined. Indeed,
        for any $\sigma\in\Sigma_n$, we have 
        $$e(\sigma_*\phi_*\beta, \mathbf{X})=(\beta, \phi^*\sigma^*\mathbf{X})=e(\phi_*\beta, \sigma^*\mathbf{X}).$$
        Also, if there exist another 
        $(h',\phi',g)$ in $\widehat{\mathcal{R}}_{n.d.}$
        and $\beta'\in\mathscr{C}(h')$ with $h\in\mathcal{R}_{n.d.}(f,\geqslant n+1)$
        such that $ \phi'_*\beta'=\phi_*\beta$, then 
        there exists some non-degenerate $h''$ and $\beta''\in\mathscr{C}(h'')$ together with 
        morphisms $(h'',\psi,h)$, $(h'',\psi',h')$ such that $\psi_*\beta''=\beta$,
        $\psi'_*\beta''=\beta'$. By Lemma \ref{lemofnondeg}, $|h''|\geqslant |h|$,
        so $h''\in \mathcal{R}_{n.d.}(f,\geqslant n+1)$.
        By condition (4) in Definition \ref{defnEinfty}, 
        $\phi_*\psi_*\beta''=\phi'_*\psi'_*\beta''$ implies 
        $\phi\psi=\phi'\psi'$, so the following equation holds in 
        $\mathscr{C}(\bullet)\times_{\widehat{\mathcal{R}}_{n.d.}(f,\geqslant n+1)^{op}}X^\bullet$.
        \begin{align*}
            (\beta,\phi^{*}\mathbf{X})=&(\psi_*\beta'',\phi^{*}\mathbf{X})\\
            =&(\beta'',\psi^*\phi^{*}\mathbf{X})\\
            =&(\beta'',\psi'^*\phi'^{*}\mathbf{X})\\
            =&(\psi'_*\beta'',\phi'^{*}\mathbf{X})\\
            =&(\beta',\phi'^{*}\mathbf{X}).
        \end{align*}

        Note that 
        $$\biggl(\displaystyle\coprod_{g\in\mathcal{R}_{n.d.}(f,n)}\mathscr{C}(g)\biggr)\times_{\Sigma_n^{op}}X^n=
        \mathscr{C}(\bullet)\times_{\widehat{\mathcal{R}}_{n.d.}(f,n)^{op}}X^\bullet.$$
        So 
        \begin{align*}
            &\mathscr{C}(\bullet)\times_{\widehat{\mathcal{R}}_{n.d.}(f,\geqslant n)^{op}}X^\bullet\\
            =&
            \biggl(\displaystyle\coprod_{g\in\mathcal{R}_{n.d.}(f,n)}\mathscr{C}(g)\biggr)\times_{\Sigma_n^{op}}X^n
            \coprod\mathscr{C}(\bullet)\times_{\widehat{\mathcal{R}}_{n.d.}(f,\geqslant n+1)^{op}}X^\bullet/(\sim )
        \end{align*}
        where $(\sim)$ is generated by 
        $(\alpha,\phi^*\mathbf{x})\sim (\phi_*\alpha,\mathbf{x})$
        for all morphisms $(f,\phi,g)$ with $f\in\mathcal{R}_{n.d.}(f,\geqslant n+1)$,
        $g\in\mathcal{R}_{n.d.}(f,n)$.

        By definition of the attaching map $e$, the above diagram is a push-out diagram.
    \end{proof}

    Applying Lemma \ref{lemglue} inductively on the construction of $\mathbb{C}_{\mathscr{U}}X$
    described in Lemma \ref{lemonCX}, we get the following result.

    \begin{prop}\label{propofCX}
        Let $\nu: \mathscr{C}\to\mathscr{C'}$ be a morphism between 
        $E_\infty$ ring operads with corresponding morphism between monads
        $\nu:\mathbb{C}_{\mathscr{U}}\to \mathbb{C'}_{\mathscr{U}}$. Then  
        $$\nu:\mathbb{C}_{\mathscr{U}}X\to \mathbb{C'}_{\mathscr{U}}X$$
        is a homotopy equivalence for all space $X$.
    \end{prop}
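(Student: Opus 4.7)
The plan is to compare $\mathbb{C}_{\mathscr{U}}X$ and $\mathbb{C}'_{\mathscr{U}}X$ stratum by stratum, using the filtration produced by Lemma \ref{lemonCX} together with Lemma \ref{lemglue} to propagate homotopy equivalences. Since $\nu$ is a natural transformation of ring operads it respects the decomposition into summands indexed by special objects $f$, so it suffices to prove that for each such $f$ the map on $\mathscr{C}(\bullet)\times_{\widehat{\mathcal{R}}_{n.d.}(f)^{op}}X^\bullet$ is a homotopy equivalence. I would carry this out by downward induction on $n$, comparing the subspaces $F_n := \mathscr{C}(\bullet)\times_{\widehat{\mathcal{R}}_{n.d.}(f,\geqslant n)^{op}}X^\bullet$ with their $\mathscr{C}'$-analogues $F'_n$. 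The base case $n = |f|+1$ is immediate since both spaces are empty.

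For the inductive step, I would place the two pushout squares from Lemma \ref{lemonCX} side by side, joined by $\nu$, obtaining precisely the cube appearing in Lemma \ref{lemglue}. Assuming $F_{n+1}\to F'_{n+1}$ is a homotopy equivalence, it then remains to verify: (a) the attaching map $L(f,n)\times_{\Sigma_n^{op}}X^n\to \coprod_{g\in\mathcal{R}_{n.d.}(f,n)}\mathscr{C}(g)\times_{\Sigma_n^{op}}X^n$ is a cofibration (and similarly for $\mathscr{C}'$); and (b) $\nu$ induces a homotopy equivalence on each of these two attaching spaces. Condition (a) follows from the cofibration $L(f,n)\hookrightarrow \coprod_g\mathscr{C}(g)$ supplied by the corollary preceding Lemma \ref{lemofndhomeo}, combined with the freeness of the $\Sigma_n$-action established in the lemma immediately after Definition \ref{defnEinfty}, which ensures that the balanced product with $X^n$ is a genuine free-orbit construction and hence preserves cofibrations. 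For (b), each of $\mathscr{C}(g)$, $\mathscr{C}'(g)$, $L(f,n)\cap\mathscr{C}(g)$ and $L'(f,n)\cap\mathscr{C}'(g)$ is contractible by the $E_\infty$ condition and the aforementioned corollary, and $\nu$ sends these pieces to each other by naturality.

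The step I expect to be the main obstacle is the promotion in (b) of level-wise contractibility into a homotopy equivalence after the balanced product $(-)\times_{\Sigma_n^{op}}X^n$: this requires a $\Sigma_n$-equivariant argument for free actions in the spirit of \cite[Appendix]{May_1974}, where one replaces the spaces by equivariant cell approximations and exploits the cofibration property to convert the collection of non-equivariant contractions into a free $\Sigma_n$-equivariant one. Once this is in hand, Lemma \ref{lemglue} closes the inductive step, the finiteness of the filtration from Proposition \ref{propofnondegen} terminates the induction, and reassembling over special $f$ yields the desired conclusion.
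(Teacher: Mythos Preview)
Your proposal is correct and follows essentially the same route as the paper: decompose by special $f$, use the pushout squares of Lemma \ref{lemonCX}, and apply Lemma \ref{lemglue} by downward induction on $n$. In fact you are more explicit than the paper's own proof, which simply asserts that $\nu$ is an equivalence on $\coprod_g\mathscr{C}(g)$ and on $L(f,n)$ and then invokes Lemma \ref{lemglue} without spelling out the passage to the $\Sigma_n$-balanced products that you correctly flag as the one nontrivial point.
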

    \begin{proof}
        By Definition \ref{defnEinfty}, both the following maps 
        \begin{align*}
            \nu&: \biggl(\displaystyle\coprod_{g\in\mathcal{R}_{n.d.}(f,n)}\mathscr{C}(g)\biggr)
            \to \biggl(\displaystyle\coprod_{g\in\mathcal{R}_{n.d.}(f,n)}\mathscr{C}'(g)\biggr)\\
            \nu&: L(f,n)_{\mathscr{C}}\to L(f,n)_{\mathscr{C}'}
        \end{align*}
        are equivalences. So applying 
        Lemma \ref{lemglue} and Lemma \ref{lemonCX},
        by induction on $n$, this proposition holds.
    \end{proof}

    Proposition \ref{propofCX} is also true for the monad $\mathbb{C}$ defined in Definition
    \ref{defnofmonad}.
    To see this, 
    note that if some non-degenerate $f$ is of type $(l;k_1,\cdots,k_l)$
    with $ (f,\sigma,\sigma_*f) $ a singular morphism
    and $\sigma_*f$
    is of type $(l';k'_1,\cdots,k'_{l'})$,
    then $k_1+\cdots+k_l> k'_1+\cdots+k'_{l'}$
    because we have evaluated some variable in $f$ to be zero.
    This gives a filtration on $\mathbb{C}X$ so that we can again 
    apply Lemma \ref{lemglue} inductively.

    We denote the full sub-category of $\widehat{\mathcal{R}}_{eff}$ generated by objects
    of the same type as some special $f$ to be $\widehat{\mathcal{R}}_{eff}(f)$ 
    with objects $\mathcal{R}(f)$. Moreover, we denote
    \begin{align*}
        \widehat{\mathcal{R}}^{=n (\leqslant n, <n, \text{resp.})}_{n.d.}&:=\coprod_{f \text{ special, }|f|=n (\leqslant n, <n, \text{resp.})} \widehat{\mathcal{R}}_{n.d.}(f)\\
        \widehat{\mathcal{R}}^{=n (\leqslant n, <n, \text{resp.})}_{eff}&:=\coprod_{f \text{ special, }|f|=n (\leqslant n, <n, \text{resp.})} \widehat{\mathcal{R}}_{eff}(f)\\
        \mathcal{R}^{=n (\leqslant n, <n, \text{resp.})}_{n.d.}&:=\coprod_{f \text{ special, }|f|=n (\leqslant n, <n, \text{resp.})} \mathcal{R}_{n.d.}(f)\\
        \mathcal{R}^{=n (\leqslant n, <n, \text{resp.})}_{eff}&:=\coprod_{f \text{ special, }|f|=n (\leqslant n, <n, \text{resp.})} \mathcal{R}_{eff}(f)\\
    \end{align*}

    \begin{prop}\label{propofCXreduced}
        Let $\nu: \mathscr{C}\to\mathscr{C'}$ be a morphism between 
        $E_\infty$ ring operads with corresponding morphism between monads
        $$\nu:\mathbb{C}\to \mathbb{C'}.$$ Then  
        $$\nu:\mathbb{C}X\to \mathbb{C'}X$$
        is a homotopy equivalence for all space $X$.
    \end{prop}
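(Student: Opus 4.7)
The plan is to carry over the argument of Proposition \ref{propofCX} using a secondary filtration on $\mathbb{C}X$ that measures how far the action of singular morphisms, as opposed to effective ones, takes us beyond $\mathbb{C}_{\mathscr{U}}X$. As noted in the remark just preceding the proposition, any nontrivial singular morphism $(f,\sigma,\sigma_*f)$ strictly decreases the sum $k_1+\cdots+k_l$ associated to the type of its source; this strict decrease is what drives a finite induction.

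First, using the canonical decomposition $\phi = p\circ \sigma$ of any morphism in $\widehat{\mathcal{R}}$ into a singular part followed by an effective part, I would rewrite the coend defining $\mathbb{C}X$ so that the effective identifications are already encapsulated by a copy of $\mathbb{C}_{\mathscr{U}}X$ and only the singular identifications remain to be imposed. Concretely, let $F_n\mathbb{C}X \subset \mathbb{C}X$ be the image of the subcoend over objects $g$ whose type sum is at most $n$. Then $F_0\mathbb{C}X$ identifies with $\mathbb{C}_{\mathscr{U}}X$ via Lemma \ref{lemofndhomeo}, while $\mathbb{C}X = \bigcup_n F_n\mathbb{C}X$, and each inclusion $F_{n-1}\mathbb{C}X \hookrightarrow F_n\mathbb{C}X$ is obtained by attaching pieces corresponding to non-degenerate objects of type sum exactly $n$ that get identified with objects of smaller type sum by singular morphisms.

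Next, for each $n$ I would exhibit a pushout square analogous to that of Lemma \ref{lemonCX}, with an attaching subspace playing the role of $L(f,n)$ but now recording singular rather than effective reductions. The contractibility conclusions of the $E_\infty$ axioms (Definition \ref{defnEinfty}(1),(3),(4)) combined with the cofibration condition (5) should imply that this attaching subspace is contractible on each component and cofibrantly included in the relevant product of $\mathscr{C}(g)$'s, so that Lemma \ref{lemglue} applies between the filtration pieces of $\mathbb{C}X$ and those of $\mathbb{C}'X$. Proposition \ref{propofCX} supplies the base case $F_0\mathbb{C}X \to F_0\mathbb{C}'X$, and induction on $n$ combined with Lemma \ref{lemglue} propagates the equivalence to each $F_n\mathbb{C}X \to F_n\mathbb{C}'X$; since the filtration is exhaustive and finite on each connected component of $\widehat{\mathcal{R}}$, the result for $\mathbb{C}X \to \mathbb{C}'X$ itself follows.

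The main obstacle is the combinatorial bookkeeping for the attaching data: unlike in Lemma \ref{lemonCX}, a single degenerate image $\sigma_*g$ can be reached by singular morphisms originating from several non-isomorphic non-degenerate sources, so one must construct the correct analog of $L(f,n)$ indexing these multiple preimages, verify that its intersections and unions remain contractible via the $E_\infty$ axioms, and establish the requisite cofibration property before Lemma \ref{lemglue} becomes applicable. Once these ingredients are in place, the inductive assembly mirrors the proof of Proposition \ref{propofCX} verbatim.
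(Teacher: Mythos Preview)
Your overall strategy---filter by type sum, exhibit pushout squares at each stage, and inductively apply Lemma~\ref{lemglue}---matches the paper's proof. But there is a genuine error in how you identify the base of the filtration. You define $F_n\mathbb{C}X$ as the image of the subcoend over objects of type sum at most $n$, and then assert $F_0\mathbb{C}X \cong \mathbb{C}_{\mathscr{U}}X$. This is wrong: objects of type sum zero are precisely the zero polynomials $0_m$, so $F_0\mathbb{C}X \cong *$, not $\mathbb{C}_{\mathscr{U}}X$. (Recall $\mathbb{C}_{\mathscr{U}}X$ is the \emph{full} effective coend, over all type sums.) You seem to be conflating two incompatible pictures: building $\mathbb{C}X$ up from $*$ by type sum, versus obtaining $\mathbb{C}X$ from $\mathbb{C}_{\mathscr{U}}X$ by further quotienting along singular morphisms. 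The second is a quotient map, not an increasing filtration, so it cannot serve as your base case in the way you describe. The paper uses the first picture and starts the induction at $F_0\mathbb{C}X = F_0\mathbb{C}'X = *$.

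There is also a simplification you are missing in the attaching data. You propose to build an analogue of $L(f,n)$ on the $\mathscr{C}$-side, tracking all singular preimages, and you correctly flag the bookkeeping as the main obstacle. The paper sidesteps this entirely: the attaching locus at stage $n$ is
\[
\mathscr{C}(\bullet)\times_{\widehat{\mathcal{R}}_{n.d.}^{=n,\,op}} sX^\bullet,
\]
where $sX^m\subset X^m$ is the set of tuples with at least one coordinate in the image of $S^0\to X$. The point is that a singular morphism evaluates some variable to $0$ or $e$, so on the $X$-side this is exactly the condition that some coordinate lies in $S^0$. The attaching map $e(\alpha,\sigma^*\mathbf{x})=(\sigma_*\alpha,\mathbf{x})$ then lands in $F_{n-1}$ because $\sigma_*$ strictly lowers type sum. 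With this description both the cell and the attaching locus are instances of the coends already handled by (the proof of) Lemma~\ref{lemonCX}, so no new contractibility or cofibration analysis is needed---the required equivalences follow from that lemma applied with $X$ and with $sX$ in place of $X$.
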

    \begin{proof}
        Let 
        $$F_n\mathbb{C}X:=\mathrm{Im}(\mathscr{C}(\bullet)\times_{\widehat{\mathcal{R}}_{eff}^{\leqslant n,op}}X^\bullet \to \mathbb{C}X).$$
        Then there is a push-out diagram
        \begin{center}
            \begin{codi}
                \obj {|(0)|\mathscr{C}(\bullet)\times_{\widehat{\mathcal{R}}^{=n,op}_{n.d.}}sX^\bullet
                &[12em]|(1')| F_{n-1}\mathbb{C}X\\
                |(1)| \mathscr{C}(\bullet)\times_{\widehat{\mathcal{R}}^{=n,op}}X^\bullet=
                \mathscr{C}(\bullet)\times_{\widehat{\mathcal{R}}_{n.d.}^{=n,op}}X^\bullet&
                |(2)| F_{n}\mathbb{C}X\\
                };
                \mor 0 "e":-> 1';
                \mor 0 -> 1;
                \mor 1 -> 2;
                \mor 1' -> 2;
            \end{codi}
        \end{center}
        Here $sX^m$ consists of $(x_1,\cdots,x_m)$ with some 
        $x_i$ lies in the image of $S^0\to X$,
        $e(\alpha,\sigma^*\mathbf{x})=(\sigma_*\alpha,\mathbf{x})$,
        and all $\sim$ are generated by 
        $(\sigma_*\alpha,\mathbf{x})\sim(\alpha,\sigma^*\mathbf{x})$.
        
        Beginning with $F_0\mathbb{C}X=F_0\mathbb{C}'X=*$,
        applying Lemma \ref{lemglue} inductively, 
        it suffices to show both
        \begin{align*}
            \nu:&\mathscr{C}(\bullet)\times_{\widehat{\mathcal{R}}^{=n,op}_{n.d.}}sX^\bullet\to \mathscr{C}'(\bullet)\times_{\widehat{\mathcal{R}}^{=n,op}_{n.d.}}sX^\bullet,\\
            \nu:&\mathscr{C}(\bullet)\times_{\widehat{\mathcal{R}}_{n.d.}^{=n,op}}X^\bullet\to\mathscr{C}'(\bullet)\times_{\widehat{\mathcal{R}}^{=n,op}_{n.d.}}X^\bullet.
        \end{align*}
        are equivalences.
        The latter has been proved in Lemma \ref{lemonCX} while 
        the former follows with the same proof.
    \end{proof}

    Using the same filtration and applying Lemma \ref{lemglue}
    inductively, we can also prove the follow result.

    \begin{lem}\label{lemofCXX'}
        Let $\mathscr{C}$ be a ring operad in $\mathscr{U}$
        with associated monad $\mathbb{C}$
        and let $X\to X'$ be an equivalence in $\mathscr{U}_e$.
        Then so is $\mathbb{C}X \to \mathbb{C}X'$.
    \end{lem}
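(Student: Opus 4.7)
The plan is to adapt the filtration argument used in the proofs of Propositions~\ref{propofCX} and~\ref{propofCXreduced}, but now varying $X$ rather than $\mathscr{C}$. Given an equivalence $f\colon X\to X'$ in $\mathscr{U}_e$, I want to show the induced map $\mathbb{C}f\colon \mathbb{C}X\to\mathbb{C}X'$ is an equivalence by inducting on the filtration
$$F_0\mathbb{C}X\subset F_1\mathbb{C}X\subset\cdots\subset \mathbb{C}X$$
from the proof of Proposition~\ref{propofCXreduced}. The base case is trivial since $F_0\mathbb{C}X=*=F_0\mathbb{C}X'$. For the inductive step, naturality in $X$ of the pushout square there yields a commutative cube whose front and back faces are pushouts; Lemma~\ref{lemglue} then reduces the claim to checking that $\mathbb{C}f$ induces equivalences on
$$\mathscr{C}(\bullet)\times_{\widehat{\mathcal{R}}^{=n,op}_{n.d.}}sX^\bullet \quad\text{and}\quad \mathscr{C}(\bullet)\times_{\widehat{\mathcal{R}}_{n.d.}^{=n,op}}X^\bullet,$$
and that both left vertical arrows are cofibrations (the latter being exactly what was verified in Proposition~\ref{propofCXreduced}).

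To handle these two objects, I would invoke the inner filtration of Lemma~\ref{lemonCX}, stratifying each connected component $\widehat{\mathcal{R}}_{n.d.}(f)$ by $\widehat{\mathcal{R}}_{n.d.}(f,\geqslant k)$ for $k=|f|,|f|-1,\ldots,0$. This again produces pushout squares, naturally in $X$, whose top-left corners have the form $L(f,k)\times_{\Sigma_k^{op}}Y^k$ for $Y\in\{X,sX\}$. Since $\Sigma_k$ acts freely on the relevant disjoint union of $\mathscr{C}(g)$'s (by the Lemma following Definition~\ref{defnEinfty}) and each $L(f,k)\cap\mathscr{C}(g)$ is contractible with cofibrant inclusion (by the Corollary preceding Lemma~\ref{lemofndhomeo}), a further application of Lemma~\ref{lemglue} reduces matters to showing that $X^k\to X'^k$ and $sX^k\to sX'^k$ are equivalences. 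The first is standard. For the second, $sX^k$ is a finite iterated pushout of coordinate subspaces in which some factors are replaced by $\kappa_0$ or $\kappa_e$, each inclusion being a cofibration since $\epsilon,\iota\colon S^0\to X$ lie in $\mathscr{U}_e$; one more induction via Lemma~\ref{lemglue} along these coordinate faces gives the conclusion.

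The main obstacle is bookkeeping rather than anything substantive: one must verify that the attaching maps $e$ in both filtrations (the outer $F_n$ filtration and the inner $\mathcal{R}_{n.d.}(f,\geqslant k)$ filtration) are natural in $X$, so that the inductive comparison between $X$ and $X'$ proceeds coherently. This naturality is essentially forced by the construction of the coend, but to invoke Lemma~\ref{lemglue} legitimately one also needs the horizontal maps in each square to be cofibrations after passing to cofibrant replacements, which is precisely where condition~(5) of Definition~\ref{defnEinfty} and Lemma~\ref{lemofunion} are used, exactly as in the contractibility arguments preceding Lemma~\ref{lemofndhomeo}. Assembling all these inductions then yields that $\mathbb{C}f\colon\mathbb{C}X\to\mathbb{C}X'$ is an equivalence.
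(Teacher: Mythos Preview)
Your proposal is correct and follows essentially the same approach as the paper: the paper's entire proof is the one-line remark ``Using the same filtration and applying Lemma~\ref{lemglue} inductively, we can also prove the following result,'' and you have faithfully unpacked exactly that argument with the nested outer ($F_n$) and inner ($\widehat{\mathcal{R}}_{n.d.}(f,\geqslant k)$) filtrations. One small caveat worth noting: both your argument and the paper's implicit one rely on the $E_\infty$ hypotheses (freeness of the $\Sigma_n$ action, condition~(5) of Definition~\ref{defnEinfty}, and nondegeneracy of $S^0\to X$), so the lemma as literally stated for an arbitrary ring operad is slightly imprecise---but this is an issue with the paper's statement, not with your proof strategy.
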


    Now applying \cite[Proposition 9.8, Corollary 11.10]{may1972geometry}
    we get 
    $$ \eta: X \to B(\mathbb{C},\mathbb{C},X) $$
    is a homotopy equivalence for all $\mathbb{C}$-algebra $X$ in $\mathscr{U}_e$
    with homotopy inverse $\varepsilon$.

    \begin{prop}\label{propcompare}
        Let $\nu: \mathscr{C}\to\mathscr{C'}$ be a morphism between 
        $E_\infty$ ring operads. Then the pull-back of action functor
        $\nu^*:\mathscr{C}'[\mathscr{U}_e]\to \mathscr{C}[\mathscr{U}_e]$
        induces an equivalence between homotopy categories.

        In particular, any $\mathscr{C}$-algebra $X$ is equivalent
        to some $\mathscr{C}'$-algebra $Y$ since $\nu^*$
        is the identity map on underlying pointed spaces.
    \end{prop}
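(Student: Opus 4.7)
The plan is to construct a quasi-inverse to $\nu^*$ via the two-sided bar construction. Define
$$F: \mathscr{C}[\mathscr{U}_e] \to \mathscr{C}'[\mathscr{U}_e], \quad F(X) := B(\mathbb{C}', \mathbb{C}, X),$$
where $\mathbb{C}'$ is regarded as a right $\mathbb{C}$-module through the induced morphism of monads $\nu: \mathbb{C} \to \mathbb{C}'$. The left action of $\mathbb{C}'$ on itself equips $F(X)$ with the structure of a $\mathbb{C}'$-algebra, hence of a $\mathscr{C}'$-algebra via Proposition \ref{propofmonadandoperad}.

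To show that $F$ and $\nu^*$ are mutually quasi-inverse, I would exhibit two natural zigzags of equivalences. First, for any $\mathscr{C}$-algebra $X$,
$$X \xleftarrow{\varepsilon} B(\mathbb{C}, \mathbb{C}, X) \xrightarrow{B(\nu,\,\mathrm{id},\,\mathrm{id})} B(\mathbb{C}', \mathbb{C}, X) = \nu^* F(X),$$
in which the left arrow is the standard homotopy equivalence recalled just above the proposition, and the right arrow is, on simplicial level $n$, the map $\nu:\mathbb{C}\mathbb{C}^n X\to\mathbb{C}'\mathbb{C}^n X$, which is an equivalence by Proposition \ref{propofCXreduced} applied to the space $\mathbb{C}^n X$. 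Dually, for any $\mathscr{C}'$-algebra $Y$,
$$F \nu^*(Y) = B(\mathbb{C}', \mathbb{C}, Y) \xrightarrow{B(\mathrm{id},\,\nu,\,\mathrm{id})} B(\mathbb{C}', \mathbb{C}', Y) \xrightarrow{\varepsilon} Y,$$
where the first map is at level $n$ the composite $\mathbb{C}'\mathbb{C}^n Y \to \mathbb{C}'(\mathbb{C}')^n Y$ obtained by iterating Proposition \ref{propofCXreduced} together with Lemma \ref{lemofCXX'}, and the second arrow is again the standard augmentation equivalence for the $\mathscr{C}'$-algebra $Y$.

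The main technical obstacle is promoting these level-wise equivalences of simplicial spaces to equivalences after geometric realization. This requires checking that all the simplicial spaces in play are proper in the sense of \cite[Appendix A]{may1972geometry}, so that the realization theorem for level-wise equivalences applies. Properness reduces to verifying that the units $\eta: X \to \mathbb{C}X$ and $\eta: X \to \mathbb{C}'X$, as well as the structural inclusions appearing as degeneracies, are closed cofibrations for any $(X,\epsilon,\iota)\in\mathscr{U}_e$. This in turn follows from the pushout description of $\mathbb{C}X$ at the first filtration stage in Lemma \ref{lemonCX}, combined with the cofibration condition (5) of Definition \ref{defnEinfty}; the argument is parallel to the one that established Proposition \ref{propofCXreduced}.

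Once properness is in hand, the two zigzags above give natural equivalences $\mathrm{id}\simeq \nu^*F$ and $F\nu^*\simeq \mathrm{id}$ on the homotopy categories, and hence $\nu^*$ is an equivalence. The ``in particular'' clause then requires no extra work: since $\nu^*$ is the identity on underlying pointed spaces, the first zigzag exhibits any $\mathscr{C}$-algebra $X$ as equivalent, as a space in $\mathscr{U}_e$, to the underlying space of the $\mathscr{C}'$-algebra $F(X)=B(\mathbb{C}',\mathbb{C},X)$.
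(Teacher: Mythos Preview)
Your proposal is correct and follows essentially the same approach as the paper: both construct the quasi-inverse via $X\mapsto B(\mathbb{C}',\mathbb{C},X)$ and produce the same two zigzags of equivalences, invoking the level-wise comparison of monads (Proposition \ref{propofCXreduced} and Lemma \ref{lemofCXX'}) together with the realization theorem for proper simplicial spaces. The only cosmetic differences are that the paper uses $\eta:X\to B(\mathbb{C},\mathbb{C},X)$ rather than its inverse $\varepsilon$ in the first zigzag, and that the paper absorbs the properness discussion into the citation of \cite[Theorem A.4]{may1972geometry} whereas you spell it out explicitly.
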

    \begin{proof}
        We define 
        \begin{align*}
            \nu_*:\mathscr{C}[\mathscr{U}_e]&\to \mathscr{C}'[\mathscr{U}_e]\\
            X &\mapsto B(\mathbb{C}',\mathbb{C},X).
        \end{align*}
        Then we get the following equivalences
        \begin{center}
            \begin{codi}
                \obj {|(0)| X &[2em] |(1)| B(\mathbb{C},\mathbb{C},X) &[7em] |(2)| \nu^*B(\mathbb{C}',\mathbb{C},X)= \nu^*\nu_*X\\
                };
                \mor 0 "\eta":-> 1 "B(\nu{,}\mathrm{id}{,}\mathrm{id})":-> 2;
            \end{codi}
        \end{center}
        and 
        \begin{center}
            \begin{codi}
                \obj {|(0)| \nu_*\nu^*Y=B(\mathbb{C}',\mathbb{C},\nu^*Y)&[7em] |(1)| B(\mathbb{C}',\mathbb{C}',Y) &[2em] |(2)| Y.\\
                };
                \mor 0 "B(\mathrm{id}{,}\nu{,}\mathrm{id})":-> 1 "\varepsilon":-> 2;
            \end{codi}
        \end{center}

        Here $B(\nu{,}\mathrm{id}{,}\mathrm{id})$ and $B(\mathrm{id}{,}\nu{,}\mathrm{id})$
        are equivalences by Lemma \ref{lemofCXX'} together with 
        \cite[Theorem A.4]{may1972geometry}.
    \end{proof}

    Therefore, we get the Comparison Theorem.
    \begin{thm}[Comparison Theorem \ref{thmcompare1}]\label{thmcompare}
        Let $\mathscr{C},\mathscr{C'}$ be any two 
        $E_\infty$ ring operads. Then the homotopy categories
        of
        $\mathscr{C}'[\mathscr{U}_e]$ and $\mathscr{C}[\mathscr{U}_e]$
        are equivalent.

        Moreover, any $\mathscr{C}$-algebra $X$ is equivalent
        to some $\mathscr{C}'$-algebra $Y$.
    \end{thm}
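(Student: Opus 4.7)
The plan is to reduce the statement to the special case already handled by Proposition \ref{propcompare}, where there is a morphism $\nu: \mathscr{C} \to \mathscr{C}'$ of $E_\infty$ ring operads. For arbitrary $\mathscr{C}$ and $\mathscr{C}'$ there need not be a direct morphism, but there is always a span through the coordinatewise product. I would define the product ring operad $\mathscr{C} \times \mathscr{C}'$ by $(\mathscr{C} \times \mathscr{C}')(f) := \mathscr{C}(f) \times \mathscr{C}'(f)$ with unit, composition, and functoriality in $\widehat{\mathcal{R}}$ inherited factorwise, and exhibit the two canonical projections $\pi: \mathscr{C} \times \mathscr{C}' \to \mathscr{C}$ and $\pi': \mathscr{C} \times \mathscr{C}' \to \mathscr{C}'$ as morphisms of ring operads. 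Once the product is shown to be $E_\infty$, Proposition \ref{propcompare} applied to each of $\pi$ and $\pi'$ yields equivalences of homotopy categories; composing one with a homotopy inverse of the other gives the asserted equivalence, and since each $\nu^*$ is the identity on underlying pointed spaces, the $\mathscr{C}'$-algebra $(\pi')^*\pi_* X$ obtained from a given $\mathscr{C}$-algebra $X$ is equivalent to $X$ on the level of pointed spaces, proving the ``moreover'' clause.

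Thus the whole argument reduces to verifying that $\mathscr{C} \times \mathscr{C}'$ satisfies Definition \ref{defnEinfty}. Conditions (1), (2), and (5) are immediate because a finite product of contractible spaces is contractible, a product of two homeomorphisms is a homeomorphism, and a product of two cofibrations is a cofibration. Condition (4) follows because an equality $\phi_{1*}(\alpha,\alpha') = \phi_{2*}(\alpha,\alpha')$ in the product entails the same equality in each factor, and then condition (4) for either $\mathscr{C}$ or $\mathscr{C}'$ alone forces $\phi_1 = \phi_2$. These verifications are essentially formal.

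The main obstacle is condition (3): applied separately to $\mathscr{C}$ and $\mathscr{C}'$, it produces \emph{a priori} two different non-degenerate lifts $(h,\psi_1,\psi_2,\beta) \in \mathscr{C}$ and $(h',\psi_1',\psi_2',\beta') \in \mathscr{C}'$, whereas the definition demands a single object $h$ with a single pair of morphisms that lifts both $(\alpha_1,\alpha_1')$ and $(\alpha_2,\alpha_2')$ simultaneously. To overcome this I would proceed in the spirit of the proof of Proposition \ref{propoperadpairEinftyimpliesringoperadEinfty}: since $f_1, f_2, g$ are all non-degenerate and connected by effective morphisms, they share a common type, so $|f_1| = |f_2| = |g|$ and the effective maps $\phi_1, \phi_2$ are bijections on underlying sets. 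Choose $h$ to be the special representative of this connected component given by Proposition \ref{propofspecial}; then $h$ is non-degenerate, and there exist bijective effective morphisms $\psi_1: h \to f_1$ and $\psi_2: h \to f_2$. Applying Lemma \ref{lemofspecialpullback} to the two composites $h \to g$ lets us modify $\psi_1$ by an automorphism of $h$ so that $\phi_1\psi_1 = \phi_2\psi_2$. Because every map in sight is now a set bijection, condition (2) makes $\psi_{1*}, \psi_{2*}, \phi_{1*}, \phi_{2*}$ homeomorphisms in both $\mathscr{C}$ and $\mathscr{C}'$, so setting $\beta := \psi_{1*}^{-1}\alpha_1$ and $\beta' := \psi_{1*}^{-1}\alpha_1'$ and then chasing $\phi_{2*}\psi_{2*}\beta = \phi_{1*}\psi_{1*}\beta = \phi_{1*}\alpha_1 = \phi_{2*}\alpha_2$ through the homeomorphism $\phi_{2*}$ gives $\psi_{2*}(\beta,\beta') = (\alpha_2,\alpha_2')$, as required.
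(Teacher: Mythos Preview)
Your overall strategy---forming the product ring operad $\mathscr{C}\times\mathscr{C}'$ and applying Proposition~\ref{propcompare} to each projection---is exactly the paper's approach. The paper's proof is a single sentence asserting that the projections are morphisms between $E_\infty$ ring operads and then invoking Proposition~\ref{propcompare}; you have supplied considerably more detail than the paper itself.

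However, your verification of condition~(3) for the product contains a genuine error. You assert that because $f_1,f_2,g$ are non-degenerate and connected by effective morphisms ``they share a common type, so $|f_1|=|f_2|=|g|$.'' Sharing a type means $|\Lambda_{f_1}|=|\Lambda_{f_2}|=|\Lambda_g|$ and the multisets $(|\Gamma_I|)$ agree; it does \emph{not} force the number of variables to coincide. For instance, $f=a_{1,4}a_{2,4}+a_{3,4}a_{4,4}$ and $g=a_{1,3}a_{2,3}+a_{1,3}a_{3,3}$ are both non-degenerate of type $(2;2,2)$, yet $|f|=4>3=|g|$, and the effective surjection $\phi$ with $\phi(1)=1,\ \phi(2)=2,\ \phi(3)=1,\ \phi(4)=3$ gives a morphism $(f,\phi,g)$ in $\widehat{\mathcal{R}}_{n.d.}$. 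Consequently $\phi_1,\phi_2$ need not be bijections, condition~(2) does not make $\phi_{1*},\phi_{2*},\psi_{1*},\psi_{2*}$ homeomorphisms, and your chase breaks down. Your argument mimics the proof of Proposition~\ref{propoperadpairEinftyimpliesringoperadEinfty}, but that proof works only because in $\mathscr{R}_{\mathscr{C},\mathscr{G}}$ \emph{every} effective $\phi_*$ is a homeomorphism, a feature not available for a general $E_\infty$ ring operad.

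The obstacle you correctly identified---that condition~(3) applied separately to $\mathscr{C}$ and $\mathscr{C}'$ yields two possibly different lifts $(h,\psi_1,\psi_2,\beta)$ and $(h',\psi_1',\psi_2',\beta')$---therefore remains unresolved. A correct argument must show either that the combinatorial lift $(h,\psi_1,\psi_2)$ can be chosen independently of the operad, or that the two lifts admit a common refinement in $\widehat{\mathcal{R}}_{n.d.}$ to which both $\beta$ and $\beta'$ can be pushed. The paper does not supply these details either, so your write-up is not deficient relative to the source, but the step as you have written it is not valid.
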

    \begin{proof}
        Note that the projections $\mathscr{C}\times\mathscr{C'}\to \mathscr{C}$
        and $\mathscr{C}\times\mathscr{C'}\to \mathscr{C}$ are morphisms between
        $E_\infty$ ring operads, so the Comparison Theorem follows
        from Proposition \ref{propcompare}.
    \end{proof}

    \section*{Acknowledgement}
    It is a pleasure to thank my mentor, Peter May, for his 
    guidance and encouragement. His insightful advice greatly 
    contributed to this paper, particularly in the comparison between two 
    constructions of bipermutative categories to 
    $E_\infty$ ring spectra and in the details in the proof 
    of the Comparison Theorem.

    I would also like to thank Weinan Lin for his helpful
    comments and suggestions. The original idea of defining
    ring operads emerged from our early discussions, and 
    the naming of ring operads is also credited to him.

    \bibliographystyle{plain}
    \bibliography{Reference}

\end{document}